\definecolor{darkgreen}{cmyk}{1,0,1,.2}
\definecolor{m}{rgb}{1,0.1,1}
\def\bbC{{\mathbb C}}
\def\R {{\mathbb R}}
\def\Z {{\mathbb Z}}
\def\C {{\mathbb C}}
\def\Ch{\operatorname{Ch}}
\def\bCh{\operatorname{\mathbf{Ch}}}
\newcommand {\real}  {\ensuremath{\mathbb{R}}}
\newcommand {\intg}  {\ensuremath{\mathbb{Z}}}
\newcommand {\cplx}  {\ensuremath{\mathbb{C}}}
\newcommand {\pr}  {\ensuremath{\mathbb{P}}}
\newcommand {\Sa}  {\ensuremath{\mathcal{S}}}
\newcommand {\Ta}  {\ensuremath{\mathcal{T}}}
\newcommand {\Za}  {\ensuremath{\mathcal{Z}}}
\newcommand {\fp}  {\ensuremath{\mathfrak{p}}}
\newtheorem{theorem}{Theorem}[section]
\newtheorem{proposition}[theorem]{Proposition}
\newtheorem{lemma}[theorem]{Lemma}
\newtheorem{example}[theorem]{Example}
\newtheorem{definition}[theorem]{Definition}
\newtheorem{remark}[theorem]{Remark}
\theoremstyle{remark}
\begin{document}

\title{The G-signature theorem on Witt spaces}

\author{Markus Banagl}

\address{Institut f\"ur Mathematik, Universit\"at Heidelberg,
  Im Neuenheimer Feld 205, 69120 Heidelberg, Germany}

\email{banagl@mathi.uni-heidelberg.de}

\author{Eric Leichtnam}

\address{Institut de Math\'ematiques de Jussieu-PRG, CNRS, 
 Batiment Sophie Germain (bureau 740), Case 7012, 75205 Paris Cedex 13, 
 France}

\email{eric.leichtnam@imj-prg.fr}

\author{Paolo Piazza}

\address{Dipartimento di Matematica, Sapienza Universit\`a di Roma, Italy}

\email{piazza@mat.uniroma1.it}

\date{November 19, 2025}

\subjclass[2020]{19K56, 19K33, 55N33, 57N80, 57R20, 19L47, 55M35,
          57S17, 57R91, 58G12, 57R91, 55P91, 19K35}


\keywords{G-Signature, Signature Operator, Stratified Spaces, Transformation Groups, 
 Equivariant K-Homology, Intersection Homology, Characteristic Classes, 
 Equivariant Kasparov Theory, fixed sets}

\begin{abstract}
Let $G$ be a compact Lie group and let $X$ be an oriented
Witt G-pseudomanifold. Using intersection cohomology it is possible to define 
${\rm Sign}(G,X)\in R(G)$, the $G$-signature of $X$.
Let $g\in G$. Assuming that the inclusion of the
fixed point set  associated to $g$ is normally non-singular, 
we prove a formula for ${\rm Sign}(g,X)$, the $G$-signature of $X$ computed at $g$, 
thus extending to Witt $G$-pseudomanifolds the fundamental result proved by Atiyah, Segal and Singer
on smooth compact $G$-manifolds.
Along the way, we give a detailed study of the fixed point set of a Thom-Mather
$G$-space $X$ and our main result in this direction is a sufficient condition
ensuring that the fixed point set $X^G$ is included in $X$ in a normally 
non-singular manner. This latter result provides many examples where our formula applies.
\end{abstract}

\maketitle
\begin{center}
{\it We dedicate this work to Georges Skandalis, with admiration.}
\end{center}
\tableofcontents

\section{Introduction}\label{sect:intro}
Let $X^{2\ell}$ be an even dimensional  smooth oriented compact manifold and let $G$ be a compact Lie group acting on $X$ by orientation preserving
diffeomorphisms. For the sake of argument assume that $\ell$ is even (the case $\ell$ odd is treated similarly).
Using Poincar\'e duality it is possible to define out of the cohomology of $X$  two finite dimensional representations $H^\pm$ and the G-signature of $X$:
$${\rm Sign}(G,X):= [H^+]-[H^-]\in R(G)\,.$$
Let us now fix a G-invariant riemannian metric on $X$. 
We can then define the signature operator $D^{{\rm sign}}$, a  Dirac-type  operator commuting with the action of $G$.
Making crucial use of the Hodge theorem it is possible to prove that ${\rm Sign}(G,X)$ is in fact equal to the equivariant
index of the signature operator:
$${\rm Sign}(G,X)={\rm ind}_G (D^{{\rm sign},+}):=[{\rm Ker} (D^{{\rm sign},+})]- [{\rm Ker} (D^{{\rm sign},-})]\in R(G)\,.$$
For $g\in G,$ we can consider the associated character, which is by definition the $g$-signature of $X$, 
${\rm Sign}(g,X)$:
$${\rm Sign}(g,X):= {\rm tr}(g_*|_{H^+}) - {\rm tr}(g_*|_{H^-})\,.$$
This is of course also equal to the equivariant index of the signature operator computed at $g$, that is 
${\rm ind}_G (D^{{\rm sign},+},g):= {\rm tr}(g_*|_{{\rm Ker} (D^{{\rm sign},+})}) - {\rm tr}(g_*|_{{\rm Ker} (D^{{\rm sign},})})$
 \footnote{This was 
denoted by Atiyah and Singer by $L(g, D^{{\rm sign},+})$ and referred to as the {\it Lefschetz number} associated to $g$ and
$D^{{\rm sign},+}$.  We shall follow the notation ${\rm ind}_G (D^{{\rm sign},+})$. }.

The Atiyah-Segal-Singer $G$-signature formula \cite{ASII} \cite{ASIII} is a formula expressing ${\rm Sign}(g,X)$ in terms of characteristic classes
associated to the fixed point set $X^g$ and the associated normal bundle $N^g\to X^g$ in $X$. In the words of Atiyah and Singer, this formula
constitutes the most interesting application of their $G$-equivariant index theorem to differential topology.
Over the years, the formula has been applied in a variety of situations, beyond the many 
ones provided already by Atiyah and Singer: algebraic geometry, knot theory, algebraic
number theory, just to mention a few: see for example  \cite{GA}, \cite{Knot}, \cite{NT},
\cite{Gordon}, \cite{hirzebruch71}, \cite{hirzebruch-zagier}, \cite{ochanine}.

The proof of the Atiyah-Segal-Singer $G$-signature formula is obtained from two crucial ingredients:
\begin{itemize}
\item the Atiyah-Singer $G$-index theorem, giving the equality of the topological and the analytic $G$-indices, as homomorphisms from $K_G (TX)$ to $R(G)$;
\item the computation of the topological $G$-index in terms of fixed point set data, a result resting ultimately
on the localization theorem in K-theory, due to Segal.
\end{itemize}

Recall that a pseudomanifold is called a \emph{Witt pseudomanifold}, if the (lower, and hence upper)
middle perversity intersection chain sheaf complex is Verdier self-dual. 
Note that Witt pseudomanifolds are orientable.
Equivalently, we require that the pseudomanifold is orientable and the links of odd-codimensional strata of any topological
stratification have vanishing (lower) middle-perversity intersection
homology in their middle degree. Such spaces have been introduced by
Siegel in \cite{Siegel:Witt}; see also Goresky-MacPherson \cite[p. 118, 5.6.1]{IH2}.
For example, any pure-dimensional complex algebraic variety is a Witt pseudomanifold.

Let now $X^{2\ell}$ be a smoothly stratified Witt $G$-pseudomanifold. 
Using Poincar\'e duality in intersection cohomology
one can define the $G$-signature ${\rm Sign}(G,X)\in R(G)$ and thanks to the Hodge theorem on Witt spaces,  due to Cheeger \cite{Cheeger}, this is also
equal to the $G$-equivariant index of the signature operator associated to a wedge metric on $X^{2\ell}$.
Wedge metrics are iteratively locally on the regular part modelled on
metrics of the form $g = dr^2 + r^2 g_L + \pi^* g_S,$
where $S$ is a singular stratum with link $L$, $\pi$ is a link bundle projection to $S$, 
and $g_S, g_L$ are metrics on $S,L$
which are independent of the cone coordinate $r$. The singularities sit at $r=0$.
Wedge metrics are also known as {\it incomplete iterated edge metrics} or 
{\it iterated conic metrics}.

\medskip
\noindent
{\it The main goal of this article is to prove a formula for ${\rm Sign}(g,X)$, $g\in G$, thus extending to Witt 
$G$-pseudomanifolds
the fundamental result of Atiyah-Segal-Singer.}

\medskip
Our formula applies to the case in which $X^g$  is equivariantly strongly normally non-singularly included in $X$. Informally,  this means
that the fixed point set $X^g$ admits an equivariant normal vector bundle which is, in addition,
a Thom-Mather vector bundle.
Under this crucial assumption we express
${\rm Sign}(g,X)$ in terms of the Goresky-MacPherson-Siegel homology $L$-class of $X^g$ and in terms of characteristic classes of the 
associated normal bundle (the same characteristic classes as in Atiyah-Segal-Singer).
The proof must of course proceed differently with respect to Atiyah-Segal-Singer, as on Witt spaces 
we have neither the  topological G-index,  as a homomorphism $K_G (TX)\to R(G)$, 
nor its equality with an analytic index. Instead we follow an alternative route to the original result, 
proposed by
Jonathan Rosenberg in \cite{G-Signature}; this proof employs KK-theory in an essential way and is primarily an analytic proof of the Atiyah-Segal-Singer formula.
Precisely for  these reasons it can be extended to the Witt case, assuming that $X^g$
is normally non-singularly included in $X$. In fact, we take this opportunity to add a number of details
in Rosenberg's argument.

\medskip

 Let us state our  $G$-signature formula. Thus, let $X$ be an (oriented) compact
 Witt $G$-pseudomanifold of even dimension. We initially assume that $G=\langle g \rangle$
 is topologically cyclic and compact.
 In order to state our formula, we consider the set $\mathcal{C}$ of connected 
 components of $X^g=X^G$. 
 Let $F\subset X^g$ be an element in $\mathcal{C}$. 
 We are assuming that
 $F$ admits a $G$-equivariant orientable normal bundle $E_F$.
 Then we prove that $F$ is a Witt pseudomanifold of even dimension 
 (Propositions \ref{thm:F}, \ref{cor:even}). 
 In particular, $F$ is oriented.
 As explained in detail in \cite[Section 3]{ASIII}, the action of $g$ on $E_F$ allows us to write $E_F$ as a direct sum of oriented even dimensional $G$-subbundles $E_F (-1)$ 
 and $E_F ( e^{i \theta_j})$, where $0 < \theta_j < \pi$, $1\leq j \leq k$. The $G$-subbundles
 $E_F ( e^{i \theta_j})$ carry a canonical $G$-invariant complex structure. 
 Here $g$ acts as $-{\rm Id}$ on $E_F (-1)$ and acts as $e^{i \theta_j}\, {\rm Id}$ on $E_F ( e^{i \theta_j})$; recall that $1$ is not an eigenvalue 
 of $g$ because $F$ is a component of $X^g$.  
Assume for simplicity that $E_F (-1)$ also admits a $G$-invariant complex structure. 
Then, with $\theta_0:=\pi$,
all bundles $E_F (e^{i \theta_j})$ have Chern classes and we can
 consider the cohomological characteristic classes  
$C(E_F ( e^{i \theta_j}))$ defined 
by the symmetric functions
$$ \prod_m \frac{ 1 + e^{i \theta_j} e^{x_m}} {1 - e^{i \theta_j}e^{x_m}},$$
where the $x_m$ denote the Chern roots.
With these preliminaries, we can state our $G$-signature formula on Witt spaces:

\begin{theorem} \label{thm:intro-sign}
Let $G=\langle g \rangle$ be topologically cyclic and compact. 
Let $X$ be a compact oriented Witt $G$-pseudomanifold 
which has the $G$-homotopy type of a finite $G$-CW complex.
Assume that the inclusion 
$X^g\equiv X^G \subset X$ is a strong $G-$equivariant normally non-singular inclusion
and that $E_F (-1)$  
admits a $G$-equivariant complex structure for each connected component of $X^g$.
Then 
$${\rm Sign}\,( g, X)\,= 
\,
\sum_{F \in \mathcal{C}}  
\langle  \prod_{j=0}^k   C(E_F(e^{i\theta_j})) \; ; \;\mathcal{L}_* (F)\, \rangle $$
with $\mathcal{L}_* (F)$ denoting the (renormalized) Goresky-MacPherson-Siegel homology $L$-class of the Witt space $F$ .
\end{theorem}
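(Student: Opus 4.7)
The plan is to follow the KK-theoretic route to the Atiyah-Segal-Singer formula pioneered by Rosenberg, replacing the smooth-manifold ingredients by their Witt-pseudomanifold analogues and using the strong $G$-equivariant normally non-singular inclusion $X^g\subset X$ to transport the localization step to the singular category. First I would recast the left-hand side analytically: by Cheeger's Hodge theorem for wedge metrics on Witt pseudomanifolds, $\mathrm{Sign}(G,X)$ equals the $G$-equivariant index of the signature operator $D^{\mathrm{sign}}$ attached to a $G$-invariant wedge metric on $X$, so $\mathrm{Sign}(g,X)$ is its character at $g$. Equivalently, $[D^{\mathrm{sign}}]\in KK^G(C(X),\mathbb{C})$ and $\mathrm{Sign}(g,X)$ is the value of the $g$-character on this class.

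Next, the strong equivariant normally non-singular inclusion supplies a $G$-equivariant open tubular neighborhood of $X^g$ modeled on the Thom-Mather bundle $E\to X^g$, restricting to $E_F\to F$ on each component $F\in\mathcal{C}$. Because $g$ has no fixed direction transverse to $X^g$, Segal-type equivariant localization at the conjugacy class of $g$ reduces the $g$-character computation to a sum over components of contributions supported on arbitrarily small $G$-invariant neighborhoods of $F$, and hence depending only on $(E_F\to F,\,g\text{-action})$. This is the singular analog of the Atiyah-Bott Lefschetz reduction, and its validity in the present setting is precisely what the strong normally non-singular inclusion hypothesis is designed to enable.

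On each $E_F$ I would then apply the multiplicativity of equivariant KK-elements under external Kasparov product. The $g$-equivariant orthogonal decomposition $E_F=E_F(-1)\oplus\bigoplus_{j=1}^k E_F(e^{i\theta_j})$ into eigensubbundles (with the complex structure on $E_F(-1)$ supplied by hypothesis) lets one isolate a pure normal-bundle factor: for each eigenbundle the $g$-trace of the equivariant signature/Dolbeault Thom class evaluates in the Chern roots to the symmetric function $\prod_m (1+e^{i\theta_j}e^{x_m})/(1-e^{i\theta_j}e^{x_m})$, i.e.\ to $C(E_F(e^{i\theta_j}))$. The residual factor on $F$ is, up to pairing with these classes, the KK-theoretic signature class of $F$, which by Propositions~\ref{thm:F} and~\ref{cor:even} is again an even-dimensional oriented Witt pseudomanifold. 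Its pairing with pulled-back cohomology is given by cap-product with the renormalized Goresky-MacPherson-Siegel homology $L$-class $\mathcal{L}_*(F)$, yielding the claimed formula after summing over $F\in\mathcal{C}$.

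The main obstacle I expect is the second step: verifying that the wedge-metric signature operator on $X$ decomposes, up to $g$-localized equivariant Kasparov equivalence, as the corresponding operator on the Thom-Mather normal bundle pushed forward through the tubular inclusion. This is the point where strong normal non-singularity is indispensable and where substantial details beyond Rosenberg's smooth-manifold treatment must be supplied, in particular a careful analysis of the wedge-type signature operator near the fixed stratified subspace and a justification that Kasparov product with the Thom class of the Thom-Mather bundle $E\to X^g$ behaves formally as in the smooth case.
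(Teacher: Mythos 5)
Your proposal follows essentially the same route as the paper: it recasts $\mathrm{Sign}(g,X)$ analytically via Cheeger's Hodge theorem, localizes the equivariant $K$-homology class $[D^{\mathrm{sign}}_X]$ at the prime ideal attached to $g$ using Segal-type localization (which is where the $G$-CW hypothesis enters), transports the computation into the Thom-Mather normal bundle via the strong normally non-singular inclusion and a KK-theoretic Gysin/Thom factorization, splits the normal bundle into $g$-eigenbundles to extract the classes $C(E_F(e^{i\theta_j}))$, and finally pairs against $\mathcal{L}_*(F)$; the paper fills in the last step with Puschnigg's bivariant Chern character together with the Moscovici-Wu identification $\operatorname{Ch}[D_F]=\mathcal{L}_*(F)$. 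You have also correctly isolated the genuinely hard step, namely the Gysin-type identity $\alpha_F([D_X])=\Sigma(p)\otimes[D_F]$ for the wedge signature operator under the Thom-Mather bundle projection.
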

For instance, as we point out in Proposition \ref{prop.subanalyticgtriang},
subanalytic proper actions admit
a $G$-CW structure, in fact, a $G$-equivariant triangulation.
We can also bring in, as in \cite{ASIII}, the complex characteristic class
$\mathcal{M}^{\theta} (E_F( e^{i \theta}))$ defined by the symmetric function
$$\prod_m \frac{{\rm tanh} (i\theta/2)}{{\rm tanh} 
    (\frac{x_m + i\theta}{2})}\,.$$

A simple computation then shows that our formula can also be written as
$${\rm Sign}\,( g, X)\,= 
\,
\sum_{F \in \mathcal{C}}  \langle  C (E_F(-1)) \prod_{j=1}^k   
(-1)^{s_j}(i\tan (\theta_j/2))^{-s_j} \mathcal{M}^{\theta_{j}} 
   (E_F( e^{i \theta_j}))
\; ; \;\mathcal{L}_* (F)\, \rangle $$
with $s_j=\dim_{\bbC} E_F( e^{i \theta_j})$.
A functoriality argument then shows that the formula also holds for 
an arbitrary compact Lie group $G$ and $g\in G$. See Remark \ref{remark:general} at the end of the article.

\medskip
Having established our formula, we then provide a class of Witt spaces to which
the formula applies. To this end, we use transversality and an idea due to Goresky and MacPherson in order to prove the following result: 

\begin{theorem} \label{thm:intro-normal} Let 
$G$ be a compact Lie group and let
$M$ be a smooth closed $G$-manifold.
Consider  a closed $G-$equivariant Whitney stratified subset $Y\subset M$. 
Assume that $N\subset M^G$ is a closed submanifold transverse to each stratum of $Y$. Then $Y \cap N$ is a closed $G-$equivariant Whitney stratified subset of $M$ and 
the inclusion $Y \cap N \subset Y$ is $G-$equivariant strongly normally non-singular.
\end{theorem}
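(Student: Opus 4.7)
My plan is to combine the classical preservation of Whitney stratification under transverse intersection with the equivariant tubular neighborhood theorem and a stratified straightening argument.

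First I would verify that $Y \cap N$ is a closed $G$-equivariant Whitney stratified subset of $M$, with strata the intersections $\{Y_\alpha \cap N\}$ for $\{Y_\alpha\}$ the strata of $Y$. By transversality each such intersection is a smooth submanifold of $N$ (and hence of $M$), and the Whitney conditions (a) and (b) are classically preserved when intersecting a Whitney stratified set with a transverse ambient submanifold. The $G$-invariance of $Y \cap N$ is immediate: $N \subset M^G$ is pointwise fixed, hence $G$-invariant, and $Y$ is $G$-invariant by hypothesis.

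Next I would construct the candidate normal bundle. Since $G$ is compact, equip $M$ with a $G$-invariant Riemannian metric, and let $\pi : \nu_N \to N$ be the orthogonal normal bundle of $N$ in $M$; this is a $G$-equivariant vector bundle, with $G$ acting trivially on the base and orthogonally on each fiber. The exponential map yields, for some $r > 0$, a $G$-equivariant tubular neighborhood diffeomorphism $\phi : \nu_N^{<r} \to U \subset M$ sending the zero section to $N$. Restricting the base to $Y \cap N$ gives the $G$-equivariant vector bundle $E := \nu_N|_{Y \cap N}$, which will serve as the normal bundle of the inclusion $Y \cap N \hookrightarrow Y$.

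The main step is to produce a $G$-equivariant stratum-preserving diffeomorphism between a neighborhood of the zero section of $E$ and a neighborhood of $Y \cap N$ in $Y$, thereby exhibiting the Thom-Mather vector bundle structure required by ``strong'' normal non-singularity. Transversality together with the implicit function theorem gives, locally along each $Y_\alpha \cap N$, a graph-like description of $Y_\alpha$ over $\pi^{-1}(Y_\alpha \cap N) \subset \nu_N$. Globalizing this via Thom's first isotopy lemma, applied to the proper stratified submersion $\pi \circ \phi^{-1} : \phi(E^{<r}) \cap Y \to Y \cap N$ (proper after shrinking to a relatively compact neighborhood, and a stratified submersion by transversality), produces a stratified trivialization identifying $\phi(E^{<r}) \cap Y$ with the total space $E^{<r}$ as Thom-Mather spaces over $Y \cap N$. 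Equivariance is automatic since $G$ acts trivially on the base $Y \cap N$ and linearly-isometrically on the fibers of $\nu_N$.

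The principal obstacle is this last step: the straightening diffeomorphism must be simultaneously $G$-equivariant and compatible with the Thom-Mather control data of $Y$. Equivariance can be secured by averaging the relevant constructions over $G$, legitimate for compact $G$. Compatibility with the control data is where the idea attributed to Goresky and MacPherson enters: by choosing control tubes for $Y$ that are $G$-invariant and adapted to the transverse submanifold $N$ --- feasible because $N$ lies inside the $G$-fixed set and so intersects any $G$-invariant tube equivariantly --- one obtains the required strongly $G$-equivariant normally non-singular structure.
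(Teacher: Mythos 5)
Your proposal gets the Whitney stratification of $Y\cap N$ and the candidate normal bundle $E=\nu_N|_{Y\cap N}$ right, and it correctly senses that Thom's first isotopy lemma and equivariant averaging must enter. But the map to which you apply the isotopy lemma does not exist as claimed, and this is a genuine gap in the central step, not a technicality.

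The set $\phi(E^{<r})\cap Y$ is generally \emph{not} a neighborhood of $Y\cap N$ in $Y$. To see the problem locally, take $M=\mathbb{R}^3$ with coordinates $(y_1,y_2,\xi)$, $N=\{\xi=0\}$, $G$ trivial, and $Y=\{y_2=\xi^2\}$ a smooth (single-stratum) hypersurface transverse to $N$ along $Y\cap N=\{y_2=\xi=0\}$. The tubular projection is $\pi(y_1,y_2,\xi)=(y_1,y_2,0)$, and $\phi(E^{<r})$ is the sliver $\{y_2=0,\,|\xi|<r\}$, so $\phi(E^{<r})\cap Y=Y\cap N$ itself: a one-manifold, not a two-dimensional neighborhood in $Y$. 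If instead you enlarge the domain to $U\cap Y$, which \emph{is} a neighborhood of $Y\cap N$ in $Y$, then $\pi\circ\phi^{-1}$ no longer takes values in $Y\cap N$ (the image of $(y_1,\xi^2,\xi)$ is $(y_1,\xi^2,0)\notin Y\cap N$ unless $\xi=0$), and moreover its differential on the stratum $Y$ drops rank exactly along $Y\cap N$, so it is not a stratified submersion. In neither reading is there a proper stratified submersion to feed into Thom's first isotopy lemma. The graph-like local parametrization of each $Y_\alpha$ over $\pi^{-1}(Y_\alpha\cap N)$ that you invoke is correct, but those local graph projections do not assemble into a global map of the type you claim.

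The paper's proof avoids this precisely by introducing an auxiliary time parameter: with $\Psi(e,t)=\exp_{\pi(e)}(te)$, it pulls $Y$ back to $\Psi^{-1}(Y)\subset\bar{E_\epsilon}\times(-\delta,1+\delta)$ and shows that the projection $\Pi$ onto the $t$-factor is a proper stratified submersion (itself a nontrivial consequence of transversality, argued in local normal coordinates). Flowing the associated controlled vector field from $t=0$ to $t=1$ carries $\pi^{-1}(Y\cap N)\cap\bar{E_\epsilon}$ onto $\phi^{-1}(Y\cap\bar{U_\epsilon})$. Even granting this, your proposal omits two further points the paper must work for: the controlled vector field has to be engineered (Lemmas~\ref{lem:S}, \ref{lem:Diag}, Proposition~\ref{prop:eta}) so that the resulting flow fixes $Y\cap N$ pointwise, since Definition~\ref{def.normnonsingincl} requires the tubular map to restrict to the zero-section inclusion on $F=Y\cap N$; and the resulting map must be verified to be a stratified \emph{diffeomorphism} compatible with control data (via Mather--Verona techniques), not merely a homeomorphism, which is exactly the improvement over \cite{Morse} that makes the inclusion \emph{strongly} normally non-singular. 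Averaging for $G$-equivariance is indeed the right closing move and agrees with the paper.
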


\noindent 
We provide simple explicit examples of projective and affine complex and real algebraic 
hypersurfaces in Examples \ref{exple.unionoftwoplanes}, \ref{exple.nodalcubic}, 
\ref{lemniscate}.

\medskip
\noindent
The word {\it strongly} refers here to the existence of a $G$-equivariant {\bf stratified  diffeomorphism}
between a $G$-invariant tubular neighbourhood of $Y\cap N$ and a $G$-equivariant real vector bundle
over it. As we shall see,
the proof of this result is rather intricate. It sharpens (and corrects) a similar result of Goresky and MacPherson in \cite{Morse}, where, however, one was 
only proving the existence of a  {\bf homeomorphism} between a tubular neighbourhood of 
$Y\cap N$ and a
real vector bundle
over it. The passage from {\bf homeomorphism} to {\bf stratified  diffeomorphism} is a non-trivial improvement that will force us to
dive deeply into techniques due to Mather and Verona.

\medskip
The paper is organized as follows. In Section \ref{sect:g-actions} we recall the notion of Thom-Mather $G$-space
and introduce the notion of (strong) normally non-singular inclusion. We 
provide examples of fixed point sets satisfying these properties. 
We also give general results for Witt $G$-pseudomanifolds,
proving in particular that the fixed point set of a Witt $G$-pseudomanifold is again Witt.
In Section \ref{sect:transversality} we prove Theorem \ref{thm:intro-normal}.
In Section \ref{sect:signature-class} we introduce the equivariant signature class of a Witt $G$-pseudomanifold, 
$[D^{{\rm sign}}]\in K_*^G (X)$, and list some of its properties. Crucial here are results relative to Gysin maps
in  K-homology of Witt spaces \cite{Hil:PDBDKP}, \cite{ABP25}. 
Purely topologically, Gysin restriction and bundle transfer results
for orientation classes of Witt spaces in bordism, $KO[\frac{1}{2}]$- and
$L$-homology, as well as ordinary homology, were first established in
\cite{banagl-gysin}, \cite{banagl-bundletransfer}, and
\cite{banagl-kotransfersullivan}.
In Section \ref{sect:equivariant-signature} we give a rigorous definition of 
$G$-signature on Witt $G$-pseudomanifolds and provide different equivalent descriptions.
Finally, in Section \ref{sect:localization} we state and prove our $G$-signature formula on Witt $G$-pseudomanifolds.

For 
singular complex algebraic varieties, homological Hirzebruch characteristic
classes have been defined by
Brasselet, Sch\"urmann and Yokura in \cite{BSY}.
Equivariant analogs have been constructed  
by Cappell, Maxim, Sch\"urmann and Shaneson in \cite{CMSS}
for finite groups $G$
that act on quasi-projective varieties $X$ by algebraic automorphisms.
These classes $T_{y*} (X;g)$ are supported on the fixed point set $X^g$ and
come either motivically from the relative Grothendieck group
$K^G_0 (\operatorname{Var}/X)$ of $G$-equivariant quasi-projective varieties over $X$
under a transformation
$K^G_0 (\operatorname{Var}/X) \to H^{BM}_{ev} (X^g)\otimes \cplx [y],$
or indeed from the Grothendieck group $K_0 (\operatorname{MHM}^G (X))$
of $G$-equivariant mixed Hodge modules under a transformation
$K_0 (\operatorname{MHM}^G (X)) \to 
 H^{BM}_{ev} (X^g)\otimes \cplx [y^{\pm 1}, (1+y)^{-1}].$
Note that there is a transformation
$K^G_0 (\operatorname{Var}/X) \to K_0 (\operatorname{MHM}^G (X))$.
The heart of the construction is an equivariant motivic Chern class
transformation into the Grothendieck group $K_0 (\operatorname{Coh}^G (X))$
of $G$-equivariant coherent sheaves on $X$ using a weak notion of equivariant
derived categories to be able to adapt Saito's filtered de Rham functors. 
Contrary to the present paper, 
normal non-singularity of the fixed point sets is not
assumed in \cite{CMSS}, as an analysis of the normal bundles is not carried out there.

On the analytic level, previous contributions to a $G$-index formula  
on singular pseudomanifolds are due to Nazaikinskii-Schulze-Sternin-Shatalov \cite{NSSS}, Lesch \cite{lesch}, Bei \cite{bei-agag}, all under the assumption that the pseudomanifold
has isolated singularities,  and  Jayasinghe  \cite{gaiana}
in more general situations. All these articles assume  that the fixed point set  
consists of isolated points.

\medskip
\noindent
{\bf Acknowledgements.} Part of this work was done while the first and the third author were visiting
the {\it Equipe d'alg\`ebres d'op\'erateurs}, Institut de Math\'ematiques de Jussieu - Paris Rive Gauche.
Financial support for these visits was provided by LYSM, Institut Mathematiques de Jussieu and Ricerca Ateneo Sapienza 2023.
The first author is supported in part by a research grant of the
Deutsche Forschungsgemeinschaft (DFG, German Research Foundation)
-- Projektnummer 495696766.
We heartily thank Pierre Albin, Jonathan Rosenberg and J\"org Sch\"urmann for interesting discussions and email correspondence.

\section{G-actions and fixed point sets}\label{sect:g-actions}

We begin this Section by giving the notion of Thom-Mather $G$-stratified space, with $G$ a compact Lie group. 
Let $(X,\mathcal{S},\mathcal{T})$ be a Thom-Mather stratified space. Here  
$\mathcal{S}=\{S\}$ denotes the stratification of $X$ and $\mathcal{T}=\{T_S,\pi_S,\rho_S\}_{S\in\mathcal{S}}$ denotes the control data.

\begin{definition}\label{def:G-stratification} 
Let $G$ be a compact Lie group. We shall say that  $(X,\mathcal{S},\mathcal{T})$ is a G-Thom-Mather space if:
(i) $G$ acts on $X$ topologically in such a way that
$\forall g\in G$ and  $\forall S\in\mathcal{S}$ the homeomorphism defined by $g$ on $X$ restricts to a diffeomorphism 
of $S$;
(ii)  $\forall S\in\mathcal{S}$ the tubular neighbourhood $T_S$ is $G$-invariant, the projection $\pi_S: T_S\to S$ is
$G$-equivariant and the tubular function $\rho_S: T_S\to [0,\infty)$ is $G$-invariant.
\end{definition}

\noindent
The following result is due to Pflaum and Wilkin, see \cite[Theorem 2.12]{PW}.

\begin{theorem}\label{PW-1}
Let $M$ be a smooth $G$-manifold, where $G$ is a compact Lie group.
Let $X\subset M$ be a $G$-invariant subspace; that is, $\forall g\in G$ the homeomorphism defined
by $g$ on $M$ maps $X$ homeomorphically into itself. Assume now that  $X$ is equipped with a
Whitney stratification $\Sa$ in $M$ such that $\forall g\in G$ and  $\forall S\in\mathcal{S}$ 
we have $gS=S$, so that $g$  induces a diffeomorphism 
of $S$. Then there exists a system
$\mathcal{T}$ of $G$-equivariant control data on $X$ 
so that $(X , \mathcal{S}, \mathcal{T})$ is a Thom-Mather $G$-stratified space.
\end{theorem}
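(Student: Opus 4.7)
The plan is to produce the required $G$-equivariant control data by averaging a non-equivariant Mather system over the compact group $G$, arguing inductively on the depth of the stratification. Two ingredients enter the setup: (i) Mather's existence theorem applied to the Whitney stratified $X \subset M$, which supplies some (non-equivariant) system $(T_S, \pi_S, \rho_S)_{S \in \mathcal{S}}$; and (ii) a $G$-invariant Riemannian metric on $M$, which exists because $G$ is compact. Shrinking each $T_S$ to $\bigcap_{g \in G} g \cdot T_S$ (still open because $G$ is compact and acts continuously) one may assume each tube to be $G$-invariant as a set, so it remains only to equivariantize the structure maps. The $G$-invariant metric also turns every stratum $S$, as a Riemannian $G$-manifold, into a space with a well-behaved Riemannian center of mass on sufficiently small geodesic balls, which is what makes averaging $S$-valued projections meaningful.

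I would then proceed by induction on depth. For a closed stratum $S$ of minimal depth, which is a smooth $G$-submanifold of $M$, the exponential map of the $G$-invariant metric, restricted to the normal bundle $\nu_S \subset TM|_S$, produces a $G$-equivariant tubular neighborhood; its metric projection and squared distance function serve as $\pi_S$ and $\rho_S$, automatically $G$-equivariant and $G$-invariant, respectively. For the inductive step, given a stratum $S$ whose lower incident strata already carry equivariant control data, I would set
\[
\tilde{\pi}_S(x) \,=\, \mathrm{com}_S \bigl( \{ g^{-1} \cdot \pi_S(g \cdot x) \}_{g \in G} \bigr),
\qquad
\tilde{\rho}_S(x) \,=\, \int_G \rho_S(g \cdot x)\, dg,
\]
where $\mathrm{com}_S$ denotes the Riemannian center of mass in $S$ (well defined after further shrinking $T_S$) and $dg$ is normalized Haar measure. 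By construction $\tilde{\pi}_S$ is $G$-equivariant and $\tilde{\rho}_S$ is $G$-invariant; both are smooth on each incident stratum and define the same germ of tubular neighborhood as the originals.

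The main obstacle, and the reason one-shot averaging does not suffice, is preserving the Thom-Mather compatibility identities $\pi_R \circ \pi_S = \pi_R$ and $\rho_R \circ \pi_S = \rho_R$ on overlaps $T_R \cap T_S$ with $R < S$: these are non-linear constraints that can fail for $\tilde{\pi}_S, \tilde{\rho}_S$ even when they hold for the originals. The induction on depth is crucial precisely here: by the time one reaches $S$, the data on all lower $R$ are already $G$-equivariant, so the compatibility identities can be re-imposed relative to those fixed tubes --- for instance by first replacing the averaged $\tilde{\pi}_S$ with its composition with $\pi_R$ on $T_R \cap T_S$ and interpolating outward with a $G$-invariant cutoff (obtained from an ordinary cutoff by Haar averaging), in the spirit of the classical Mather-Verona scheme. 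Verifying that this modification destroys neither equivariance nor smoothness across strata, and that the successive local-to-global gluings fit together, is the technical heart of the argument; once this is in place the resulting $\tilde{\mathcal{T}} = \{\tilde T_S, \tilde \pi_S, \tilde \rho_S\}$ satisfies all conditions of Definition \ref{def:G-stratification}.
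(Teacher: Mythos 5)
The paper does not prove this theorem itself: it cites Pflaum--Wilkin \cite[Theorem 2.12]{PW}, whose argument is an equivariant adaptation of Mather's inductive construction of control data \cite[Prop.\ 7.1]{Mather}, and adds only the remark that the argument goes through when one starts from a global $G$-invariant Whitney stratification rather than from the coarsest stratification associated to a local one. So your proposal should be measured against that Mather/Pflaum--Wilkin scheme.

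Several of your building blocks are sound: averaging $\rho_S$ over $G$, using a Riemannian center of mass in $S$ (with respect to a $G$-invariant metric) to average the $S$-valued maps $g^{-1}\circ\pi_S\circ g$, checking that the result is $G$-equivariant, treating strata of minimal depth by the exponential map of the invariant metric, and organizing the whole construction as an induction on depth. You also correctly identify that the real difficulty is restoring the Thom--Mather compatibility identities $\pi_R\circ\pi_S=\pi_R$ and $\rho_R\circ\pi_S=\rho_R$ on $T_R\cap T_S$ after averaging.

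The gap is precisely at that point, and the one concrete repair you propose does not make sense as written. Replacing $\tilde\pi_S$ by ``its composition with $\pi_R$'' produces a map into $R$, not a retraction onto $S$, so this cannot be the modification of $\tilde\pi_S$. What is actually needed is to move $\tilde\pi_S(x)$ within $S$ to the point of $S\cap\pi_R^{-1}(\pi_R(x))$ at the correct $\rho_R$-level, in an equivariant, smooth, and stratum-preserving way, while keeping $\tilde\pi_S$ a submersive retraction and while doing this \emph{simultaneously and consistently} for all lower incident strata $R<S$ (whose tubes overlap); none of these issues is addressed by an invariant cutoff interpolation, and the last one --- coherence over several $R$'s --- is exactly where a naive ``average then patch'' scheme tends to break down. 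Mather, and Pflaum--Wilkin after him, avoid this difficulty altogether by \emph{not} correcting a pre-existing tubular neighborhood: at each inductive step they construct the tube for $S$ from the outset so that over $T_R$ it is a tubular neighborhood \emph{relative to} the submersion $(\pi_R,\rho_R)$, using the uniqueness-up-to-isotopy theory of compatible tubular neighborhoods; equivariance is obtained by running that construction with the $G$-invariant metric (and, where isotopies are used, by averaging vector fields, which is linear and unproblematic). If you want to keep your average-then-correct strategy, the missing ingredient is an equivariant version of Mather's isotopy lemma for tubular neighborhoods compatible with a family of submersions, and you would need to state and prove it; as the proposal stands, the ``technical heart'' you flag is genuinely unresolved.
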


The proof of Pflaum-Wilkin produces equivariant control data along the coarsest global stratification associated to a G-compatible Whitney 
local stratification $\Sa_{{\rm loc}}$. See the remark below.  Such a global stratification satisfies the hypothesis of  Theorem \ref{PW-1} by Proposition 2.3 in \cite{PW}. However, following the proof 
given there, in turn based on the proof of 
\cite[Prop. 7.1]{Mather},  
one understands  that  the same conclusion holds
if we {\it start} with a G-invariant Whitney global stratification $\Sa$ as in the statement of the above Theorem.
This stratification might be different from the coarsest global stratification associated to 
the local stratification it defines. This phenomenon will be explained below.

More generally:
\begin{theorem}\label{PW-2}
Let $M$, $(X,\Sa)$ and $G$ be as in Theorem \ref{PW-1}. 
If $N$ is another smooth $G$-manifold and $f:M\to N$ is a $G$-equivariant smooth map 
with the property
that $f|_X: X\to N$ is a stratified submersion (that is,  $f|_S: S \to N$ is a submersion 
for each stratum $S$ in $X$), then there exists a system
$\mathcal{T}$ of $G$-equivariant control data on $X$ that is {\em compatible with $f$}.
\end{theorem}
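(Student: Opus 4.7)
The plan is to combine the construction of Pflaum-Wilkin (Theorem~\ref{PW-1}) with the non-equivariant theorem of Mather producing control data compatible with a stratified submersion (\cite[Prop.~7.1]{Mather}). I would proceed by induction on the depth of the strata of $X$, and at the inductive step for a stratum $S$ build a $G$-equivariant tubular neighbourhood $(T_S,\pi_S,\rho_S)$ satisfying three simultaneous conditions: (a) the Mather compatibility relations $\pi_R \circ \pi_S = \pi_R$ and $\rho_R \circ \pi_S = \rho_R$ on overlaps with all previously constructed data on lower-depth strata $R$ with $S \subset \overline{R}$; (b) equivariance, i.e.\ $\pi_S$ is $G$-equivariant and $\rho_S$ is $G$-invariant; (c) $f$-compatibility, $f \circ \pi_S = f|_{T_S \cap X}$.

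First, one produces a candidate $\pi_S^0$ satisfying (a) and (b) by the method of Pflaum-Wilkin: starting from the normal exponential of a $G$-invariant Riemannian metric on $M$, one obtains a $G$-equivariant tubular neighbourhood of $S$ in $M$, and one enforces compatibility with the already chosen data on lower strata by averaging in the affine space of candidate projections; since all inputs are $G$-equivariant, the average is automatically $G$-equivariant. Second, one adjusts $\pi_S^0$ to satisfy (c) by Mather's argument: since both $f|_S$ and $f|_R$ are submersions for every $R$ with $S \subset \overline{R}$, and since $f \circ \pi_S^0 = f$ holds to first order along $S$, one straightens $\pi_S^0$ to a new projection $\pi_S$ either by integrating a controlled vector field along the fibres of $\pi_S^0$ whose flow preserves $f$, or by locally solving $f(\pi_S(x)) = f(x)$ in adapted coordinates via the implicit function theorem. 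The adjustment takes place in directions tangent to the fibres of $f$ and of $\pi_S^0$, which preserves (a) because the compatibility relations with lower strata involve only the projection onto $S$ and are local along $S$.

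The main obstacle — and the core of the proof — is to check that the averaging used in the first step and the straightening used in the second step can be performed coherently, so that the resulting $\pi_S$ simultaneously satisfies (a), (b), and (c). The key observation is that $G$-averaging preserves $f$-compatibility: if $f \circ \pi = f$, then for each $g \in G$ the equivariance of $f$ gives
\[
f \circ (g \cdot \pi \cdot g^{-1}) = g \cdot (f \circ \pi) \cdot g^{-1} = g \cdot f \cdot g^{-1} = f,
\]
so any convex combination over $G$ of $f$-compatible projections remains $f$-compatible. Conversely, Mather's $f$-straightening preserves $G$-equivariance provided one works in $G$-invariant slices around each orbit of $S$, using the $G$-invariant local adapted coordinates furnished by the slice theorem for compact Lie group actions. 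The two operations therefore commute up to permissible reorderings, and can be interleaved at each inductive step without loss of any previously achieved property. Assembling the resulting tubular data as $S$ ranges over all strata produces the desired $G$-equivariant control system $\mathcal{T}$ on $X$ that is compatible with $f$, which is the statement of the theorem.
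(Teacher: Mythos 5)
The paper does not give its own proof of this theorem: immediately after the statement it simply writes ``For the proof, see again \cite[Theorem 2.12]{PW}.'' Pflaum--Wilkin's Theorem 2.12 already establishes the equivariant control data statement in the presence of a compatible equivariant submersion $f$, so the paper delegates. Your proposal is therefore a reconstruction of what the cited proof does, and at a high level it follows the same line: induct on depth, build each tubular neighbourhood to be simultaneously Mather-compatible, $G$-equivariant, and $f$-compatible, and observe that $G$-conjugation by an equivariant $f$ preserves $f$-compatibility. That last observation is indeed the key new point needed to promote Mather's \cite[Prop.\ 7.1]{Mather} to the equivariant setting, and it is correct as you verify it.

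Two places need more care than your sketch gives them. First, the assertion that the $f$-straightening of $\pi_S^0$ ``preserves (a) because the compatibility relations with lower strata involve only the projection onto $S$ and are local along $S$'' is not right as stated: replacing $\pi_S^0$ by a new $\pi_S$ changes $\pi_R \circ \pi_S$ and $\rho_R \circ \pi_S$, so (a) can perfectly well be destroyed by a post-hoc adjustment. In Mather's actual argument (and in Pflaum--Wilkin's adaptation) one does not first achieve (a) and then repair (c); rather, the inductive step extends control data from the union of previously constructed tubular neighbourhoods, and that extension is constructed so that (a) and (c) hold \emph{simultaneously} — the $f$-compatibility is imposed during the extension, not afterwards. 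Second, the claim that ``the two operations therefore commute up to permissible reorderings'' is asserted rather than established. What actually makes things work is that, in the affine parametrization of nearby tubular projections used for the averaging, $f$-compatibility cuts out an affine subspace (since $f$ has a linear normal form once $f|_S$ is a submersion), so the $G$-average of $f$-compatible, lower-stratum-compatible candidates remains both. That is the precise statement hiding behind your commuting remark, and it is where one invokes the slice theorem to get $G$-invariant adapted coordinates. With those two points filled in your sketch matches the expected Pflaum--Wilkin argument.
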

 For the proof, see again \cite[Theorem 2.12]{PW}.

\begin{remark}
Let $X$ be a separable locally compact Hausdorff space.
If $\mathcal{S}_{{\rm loc}}$ is a local stratification of $X$, defined in terms of germs $\mathcal{S}_x$, as in \cite{Pflaum},
\cite{PW}, then one obtains from $\mathcal{S}_{{\rm loc}}$
a global stratification $\mathcal{S}$ of $X$  by decomposing $X$ into the spaces $S_{d,m}$ of points $x\in X$
of depth $d$ and $\dim \Sa_x = m$. 
These $S_{d,m}$ are smooth manifolds
and the  global stratification $\mathcal{S} = \{ S_{d,m} ~|~ d,m \}$ induces the local
stratification $\Sa_{{\rm loc}}$. It is the coarsest stratification with this property.\\
Assume now that $X$ 
comes with a global stratification $\Za$. For example  $X\subset M$ is a Whitney stratified space in a compact manifold $M$. Then $\Za$ will induce a local stratification $\Sa_{{\rm loc}} (\Za)$; it is generally not true that the associated
coarsest decomposition $\{ S_{d,m} \} $ 
coincides with the given stratification $\Za$ as the following example illustrates.

\begin{example}
Let $X=\real$ be the real line equipped with the stratification $\Za$
given by $S_-, \{ 0 \}, S_+$, where $S_-$ consists of the negative,
and $S_+$ of the positive real numbers.
The induced local stratification $\Sa_{{\rm loc}} (\Za)$ associates the set germ
$\Sa_0 = \{ 0 \}$ to $0\in X$ and the germ $\Sa_x$ of an open interval neighborhood 
to any $x\in \real - \{0 \}.$
The coarsest global  stratification of $X$ inducing $\Sa_{{\rm loc}} (\Za)$ consists of the pieces
$S_{1,0} = \{ 0 \}$ ($d=1$, $m=0$) and 
$S_{0,1} = \real - \{ 0 \}$ ($d=0$, $m=1$).
This does not agree with $\Za$, it is strictly coarser than $\Za$. 
\end{example}
\end{remark}

The analysis of the signature operator is best carried out by endowing a pseudomanifold with a wedge 
metric (also known as {\it iterated incomplete edge metrics} or {\it iterated conic metric}).
The following result will be important.

\begin{proposition} \label{prop:metric}
Given a compact Lie group $G$ and a compact Thom-Mather $G$-stratified space $X$, 
there always exists a $G$-invariant wedge metric $\mathbf{g}$ on $X$.
\end{proposition}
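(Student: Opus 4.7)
The plan is to first construct an ordinary (non-equivariant) wedge metric on $X$ by induction on depth, then symmetrize it by averaging with respect to the normalized Haar measure $d\mu$ on $G$. The hypotheses of Definition \ref{def:G-stratification} --- namely that the $G$-action preserves each stratum as well as the tubular neighborhood $T_S$, the projection $\pi_S$ (equivariantly) and the tubular function $\rho_S$ (invariantly) --- are precisely what guarantees that the set of wedge metrics compatible with $\mathcal{T}$ is both $G$-invariant and convex.

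For the existence of some wedge metric $\mathbf{g}_0$, I would induct on the maximal depth $d$ of $(X,\mathcal{S},\mathcal{T})$. The base case $d=0$ is a smooth manifold, which carries any Riemannian metric. For the inductive step, let $\Sigma \subset X$ denote the union of all strata of maximal depth (a closed smooth submanifold). For each component $S \subset \Sigma$, the link $L_S$ is a compact Thom-Mather space of depth $<d$, so by induction it admits a wedge metric $g_L$. Choose any smooth metric $g_S$ on each such $S$, set $r = \sqrt{\rho_S}$, and on $T_S \setminus S$ take the local model
\[
dr^2 + r^2 g_L + \pi_S^* g_S.
\]
The complement $X \setminus \Sigma$ is a Thom-Mather space of depth $<d$, so by induction it carries a wedge metric as well. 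Gluing these pieces via a partition of unity subordinate to $\{T_S\}$ together with $X \setminus \overline{U}$ (for smaller concentric tubes $U \subset T_S$) yields a wedge metric $\mathbf{g}_0$ on $X$.

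For the equivariance, set
\[
\mathbf{g} \;=\; \int_G g^* \mathbf{g}_0 \, d\mu(g).
\]
Left-invariance of $\mu$ makes $\mathbf{g}$ a $G$-invariant symmetric $2$-tensor on each stratum. To see that it is still a wedge metric, observe that for each $g \in G$, the identities $\pi_S \circ g = g \circ \pi_S$ and $\rho_S \circ g = \rho_S$ from Definition \ref{def:G-stratification} show that $g^* \mathbf{g}_0$ has the same iterated wedge form as $\mathbf{g}_0$, only with $g_L$ and $g_S$ replaced by their pullbacks under the restrictions of $g$ to $L_S$ and $S$. Hence each $g^* \mathbf{g}_0$ lies in the convex set of wedge metrics compatible with $\mathcal{T}$, and the Haar integral --- a continuous convex combination --- lies there as well.

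The main obstacle is the convexity claim at each depth: that a convex combination of wedge metrics compatible with a fixed Thom-Mather structure is again a wedge metric. Since the local model $dr^2 + r^2 g_L + \pi_S^* g_S$ is affine in $g_L$ and $g_S$ (with the radial part $dr^2$ unchanged), this reduces inductively to the corresponding statement at the link level, which is the induction hypothesis. This closes both the existence construction and the averaging argument.
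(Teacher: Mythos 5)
Your proof is correct and follows essentially the same strategy as the paper: fix one wedge metric $\mathbf{g}_0$, observe that pullback under elements of $G$ (which act by stratified diffeomorphisms preserving the control data) stays in the class of wedge metrics, and average over $G$ using the Haar measure. The paper cites \cite{ALMP:Witt} and \cite{brasselet-et-al} for the existence of $\mathbf{g}_0$ rather than reconstructing it by induction on depth, and it justifies the pullback step by citing \cite[Lemme 5.5]{brasselet-et-al} or by passing to the resolution $M$ and the wedge tangent bundle; your more direct argument via the equivariance of $\pi_S$ and invariance of $\rho_S$ amounts to the same thing, since $G$ acts by stratified diffeomorphisms in the sense of preserving the Thom-Mather data. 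One small imprecision to flag: a wedge metric is only required to be locally \emph{modelled on} $dr^2 + r^2 g_L + \pi^* g_S$ near the strata (up to a suitable equivalence), not to take that exact form globally, so the set of wedge metrics is not literally the affine set your phrasing suggests; it is nonetheless convex because the modelling condition is stable under convex combinations, which is what the averaging step actually needs.
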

\begin{proof}
We can fix a wedge metric $\mathbf{g}_0$ on $X$, see \cite{ALMP:Witt} or, 
equivalently, \cite[Appendix]{brasselet-et-al}. In general, the pull-back of a wedge metric by a stratified diffeomorphism  is again a wedge metric.
This is proved in \cite[Lemme 5.5]{brasselet-et-al}. Alternatively, the datum of a 
wedge metric $\mathbf{g}_0$ is the same as the datum of a bundle metric on the wedge tangent bundle over the resolution $M$ of $X$;
a stratified diffeomorphism  $f:X\to X$ lifts to a smooth map $F:M\to M$ which is a $b$-map of  manifolds with corners that 
preserves the boundary iterated fibration structure of $M$. 
This implies that $F^* \mathbf{g}_0$ defines again a bundle metric
on the wedge tangent bundle and its restriction to the interior is thus a wedge metric on $X$, 
and it is equal to $f^* \mathbf{g}_0$. Hence, if $h \in G$, then $h^* \mathbf{g}_0$ is again a 
wedge metric. 
Denote by $d \mu(h)$ the (bi-invariant) Haar measure of $G$.  
Then, since $G$ is compact, we see that $\mathbf{g}= \int_G h^*\mathbf{g}_0\, d \mu(h)$ defines a wedge metric on $X$ which 
is clearly $G-$invariant since the Haar measure is bi-invariant.
\end{proof}
We want to investigate the fixed point set $X^G$ of a Thom-Mather $G$-stratified space $X$. 
We shall need that $X^G$ is the topologically disjoint union of
subsets $F$ that have a normal vector bundle; put differently, 
as we shall now explain, we shall want the inclusion $F\hookrightarrow X$ to be normally non-singular. 
Let us thus recall the notion of a normally non-singular inclusion as given
by Goresky and MacPherson in \cite[1.11, p. 46]{Morse}.

\begin{definition} \label{def.normnonsingincl}
An inclusion
$F \subset X$ is called
\emph{normally non-singular}, if there exists a real vector bundle
$p:E\to F$, an open neighborhood $U \subset X$ of $F$, and a
homeomorphism
\[  \phi: U \longrightarrow E \]
whose restriction to $F$ is
the zero section inclusion $F\subset E$.
\end{definition}

We will now give a slightly stronger notion, suitable for our purposes. To this end, 
we will first introduce the notion of Thom-Mather vector bundles over a 
Thom-Mather space.\\
\begin{definition}
Let $(B, \Sa_B, \Ta_B)$ be a Thom-Mather stratified space.
A \emph{Thom-Mather vector bundle} over $(B, \Sa_B, \Ta_B)$ consists of a real
vector bundle $p: E\to B$  of rank $k$ and Thom-Mather data
$(E, \Sa_E, \Ta_E)$ on $E$ such that 
the strata in $\Sa_E$ are of the form $p^{-1}(S), S\in \Sa_B,$ and
there exist local trivializations
\[ \xymatrix{
p^{-1} (U_\alpha) \ar[dr]_p \ar[rr]^{\phi_\alpha} & & U_\alpha \times \real^k 
  \ar[ld]^{\operatorname{pr}_1} \\
& U_\alpha &
} \]
given by stratified diffeomorphisms $\phi_\alpha$.
(Recall that this means in particular that $\phi_\alpha$ preserves control data.)
Here, 
\begin{itemize}
\item the open subset $p^{-1} (U_\alpha) \subset E$ is equipped with the strata and 
control data obtained by restricting $(\Sa_E, \Ta_E),$ 
\item the open subset $U_\alpha \subset B$ is equipped with the strata and 
control data obtained by restricting $(\Sa_B, \Ta_B),$
\item and $U_\alpha \times \real^k$ is equipped
with the strata and control data obtained by pulling back those on $U_\alpha$
under the standard first factor projection (as in Verona \cite[p. 8]{Verona}). 
\end{itemize}
\end{definition}
This agrees with the notion of vector bundle over a stratified space
as understood in \cite{ABP25}, where in fact a general notion of
Thom-Mather stratified fiber bundles is developed.
\begin{definition} \label{def.stronglynormnonsing}
An inclusion
$F \subset X$ of Thom-Mather stratified spaces is called
\emph{strongly normally non-singular} if there exists a Thom-Mather vector bundle 
$p:E\to F$, together with $U$ and $\phi$ as in Definition \ref{def.normnonsingincl}, 
where $\phi$ is now required to be a stratified diffeomorphism.
\end{definition}

Now we can state the definition of a $G-$equivariant strongly normally non-singular inclusion.
We rely on the concept of equivariant control data as introduced in \cite{PW}.
\begin{definition}\label{def:G-strong}
Let $F\subset X$ be an equivariant  inclusion of  smoothly Thom-Mather $G-$stratified spaces. The inclusion $F \subset X$ is called
\emph{$G$-equivariantly strongly normally non-singular}, if there exists a 
$G$-equivariant real vector bundle
$p:E\to F$, a $G-$invariant open neighborhood $U \subset X$ of $F$, and a
$G$-equivariant stratified diffeomorphism 
\[  \phi: U \longrightarrow E \]
respecting the relevant  families of equivariant control data, and 
whose restriction to $F$ is
the zero section inclusion $F\subset E$.
\end{definition}

Let $X$ be a Thom-Mather $G$-stratified space.
In general, the inclusion of the components $F$ of the fixed point set $X^G$ need not
be normally non-singular. What follows is a simple 
example illustrating the phenomenon.

\begin{example}
Let $L$ be a closed smooth manifold on which $G$ acts freely. 
Let $X$ be the suspension of $L$. The suspension of the $G$-action on $L$
yields a $G$-action on $X$ such that $X^G$ is given by the two suspension points.
If the cone on $L$ is not Euclidean, then the suspension points have no
normal vector bundle neighborhood in $X$.
\end{example}

On the other hand, let us give a simple example where the inclusion of the fixed point set is indeed
equivariantly strongly normally non-singular. 

\begin{example}
(Product of a smooth $G$-space with a trivial but singular $G$-space.)
Let $G$ be a compact Lie group and
let $S$ be a smooth $G$-manifold with fixed point set $S^G$.
By the slice theorem, the smooth submanifold $S^G$ has a 
$G$-invariant tubular neighborhood $U_S$ in $S$ with an
equivariant diffeomorphism $\phi_S: U_S \cong E_S$
to the total space of a $G$-vector bundle $E_S \to S^G$, the normal
bundle of $S^G$ in $S$.
Let $F$ be a Witt pseudomanifold.
The product $X = S \times F$ is a Witt pseudomanifold, since $S$ is smooth.
Its strata are of the form $S\times T,$ for strata $T$ of $F$.
Let $G$ act on $X$ by
\[ g (s,y) := (gs,y),~ s\in S,~ y\in F,~ g\in G. \]
This action preserves the strata of $X$.
The fixed point set of this action is given by
\[ X^G = S^G \times F. \]
Indeed, if $(s,y) \in S^G \times F$, then
$g (s,y) = (gs,y) = (s,y)$. Conversely, if $(s,y) \in X^G$, then
$(gs,y) = g(s,y) = (s,y)$, so $(s,y) \in S^G \times F$.
The set $U_X = U_S \times F$ is a $G$-invariant neighborhood of $X^G$ in $X$,
and the map 
\[ \phi_X = \phi_S \times \operatorname{id}_F:
    U_X \cong E_S \times F  \]
is a $G$-equivariant stratified diffeomorphism. The space
$E_S \times F$ is the total space of the $G$-vector bundle given
by pulling back $E_S \to S^G$ under the projection
$S^G \times F \to S^G$.
Thus $U_X$ is a $G$-tubular neighborhood of $X^G$ in $X$ which
is $G$-equivariantly stratified diffeomorphic to a $G$-vector bundle.
This shows that the fixed point set $X^G$ is equivariantly strongly 
normally non-singular in $X$.
(For instance, if $G=\intg/_n$, then $S$ could be a surface of genus $n$ on which $G$
acts by the usual rotation. Then there are precisely two fixed points.)
\end{example}

\medskip
\noindent
We give further explicit examples in the next section.

\begin{definition}
Let $(X,\Sa,\mathcal{T})$ be a Thom-Mather stratified space.
Consider the filtration given by $X_j:= \cup_{S\in\Sa, \dim S\leq j}  S$. We shall say that 
$(X,\Sa,\mathcal{T})$  is a \emph{$n$-dimensional Thom-Mather pseudomanifold} if 
$$X=X_n\,, X_{n-1}=X_{n-2}\;\;\text{and}\;\; X\setminus X_{n-1}\;\;\text{is open and dense in}\;\; X$$

\end{definition}
\begin{definition}
A Thom-Mather  pseudomanifold $(X,\Sa)$ of dimension $n$ is said to satisfy the
\emph{Witt condition}, if the lower middle perversity intersection chain
sheaf complex with rational coefficients is Verdier self-dual 
with respect to the dimension $n$ in the derived category of $X$.
\end{definition}
By the topological invariance of the intersection chain sheaf \cite[\S 4]{IH2},
the Witt condition is in fact independent of a choice of stratification
on a pseudomanifold. 
If one does fix a topological stratification, though, then the Witt condition
is equivalent to requiring $IH^{\bar{m}}_l (L^{2l};\mathbb{Q})=0$ for
the links $L$ of odd-codimensional strata; see \cite[p. 118, Proposition 5.6.1]{IH2}.
\begin{definition}
A Thom-Mather $G$-stratified space $(X,\Sa,\mathcal{T})$ 
is called a \emph{Thom-Mather $G$-pseudomanifold}, if its underlying
Thom-Mather stratified space (forgetting the group action), is a Thom-Mather
pseudomanifold.
\end{definition}

\begin{definition}
A \emph{(Thom-Mather)  Witt $G$-pseudomanifold} is a
Thom-Mather  $G$-pseudomanifold that satisfies the Witt condition. From now on, unless otherwise stated,
a Witt $G$-pseudomanifold will be assumed to be endowed with Thom-Mather control data.
\end{definition}

\begin{proposition} \label{thm:F}
Let  $X$ be a Witt $G$-pseudomanifold, $g\in G$ and let $F$ be a connected component of $X^g$.
Assume that the inclusion $F \subset X$ is strongly normally non-singular and that the normal bundle  is orientable.
Then  $F$ is also a Thom-Mather pseudomanifold that satisfies the Witt condition.
\end{proposition}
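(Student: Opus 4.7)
The plan is to transport the Thom-Mather pseudomanifold and Witt structures from $X$ to $F$ through the strongly normally non-singular inclusion. Let $U \subset X$ be the open neighborhood of $F$ and $\phi: U \to E$ the stratified diffeomorphism onto a Thom-Mather vector bundle $p: E \to F$ of rank $k$. By the very definition of Thom-Mather vector bundle, $F$ already comes endowed with Thom-Mather data $(F, \Sa_F, \Ta_F)$ and the strata of $E$ are precisely the $p^{-1}(S)$ for $S \in \Sa_F$. Pulling back along $\phi^{-1}$ then gives a canonical bijection between the strata of $F$ and the strata of $X$ meeting $F$, via $S \leftrightarrow S'$ where $S'$ is the unique stratum of $X$ with $S' \cap F = S$.

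The crucial local observation is that each trivialization $\phi_\alpha: p^{-1}(U_\alpha) \to U_\alpha \times \real^k$ carries product control data: the tube around $p^{-1}(S_\alpha)$, with $S_\alpha = S \cap U_\alpha$, has the form $T_{S_\alpha} \times \real^k$ with projection $\pi_{S_\alpha} \times \id$ and tubular function $\rho_{S_\alpha} \circ \mathrm{pr}_1$. Consequently, for $x \in S$ and $v \in \real^k$, the Thom-Mather link fiber $\pi^{-1}(x,v) \cap \rho^{-1}(\epsilon)$ equals $(\pi_{S_\alpha}^{-1}(x) \cap \rho_{S_\alpha}^{-1}(\epsilon)) \times \{v\}$, which is stratified homeomorphic to the link of $S$ in $F$ at $x$. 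Hence the link of $p^{-1}(S)$ in $E$ at $x$ coincides, as a stratified pseudomanifold, with the link of $S$ in $F$ at $x$, and via $\phi$ it further agrees with the link of the corresponding stratum $S'$ in $X$. In particular, corresponding strata have the same codimension.

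From these identifications the pseudomanifold conditions on $F$ follow: the top stratum $F_{\mathrm{top}}$ is open and dense in $F$ because $p$ is an open surjection with connected fibers whose preimage is the top stratum of $E$, and the latter coincides via $\phi$ with the top stratum of $U$, which is open and dense in $U$ (as $U$ is open in $X$ and $X$ is a pseudomanifold); the absence of codimension-one strata in $F$ is immediate from the absence in $X$ together with the codimension identification; and orientability of $F_{\mathrm{top}}$ follows from orientability of the total space of $E|_{F_{\mathrm{top}}}$, inherited from the oriented top stratum of $X$, combined with the assumed orientability of the normal bundle.

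Finally, the Witt condition for $F$ reduces directly to that of $X$: any odd-codimensional stratum $S$ of $F$ corresponds to an odd-codimensional stratum $S'$ of $X$ with the same link $L^{2\ell}$, so the vanishing $IH^{\bar{m}}_\ell(L^{2\ell};\mathbb{Q}) = 0$ transfers from $X$ to $F$. The main technical obstacle is thus precisely the local verification that forming product control data with a smooth Euclidean factor preserves the link of a stratum; once this is granted, both the pseudomanifold axioms and the Witt condition on $F$ follow by stratum-by-stratum comparison with $X$.
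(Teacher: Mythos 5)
Your proposal is correct and follows the same broad strategy as the paper: transport the pseudomanifold and Witt structures from $X$ through the stratified diffeomorphism $\phi:U\to E$ and then across the Thom--Mather vector bundle $p:E\to F$. The one place where you proceed differently is the Witt condition. The paper argues sheaf-theoretically: $U$ is Witt because it is open in $X$, so $E$ is Witt via $\phi$, so local trivializations $V_\alpha\times\real^k$ are Witt, and then it invokes the \emph{desuspension principle} (Lemma~14.1 of \cite{banagl-equivlclass}) as a black box to conclude each $V_\alpha$, and hence $F$, is Witt. You instead unpack the content of that lemma in the Thom--Mather setting: you observe that the product control data of the trivialization forces the Thom--Mather link of $p^{-1}(S)$ in $E$ to equal the link of $S$ in $F$, hence via $\phi$ to equal the link of the corresponding stratum in $X$, and that codimensions are preserved, so the middle-degree $IH$ vanishing transfers stratum by stratum. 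Both routes are sound; the paper's citation-based argument is a bit more robust since it does not depend on the particular definition of ``link'' (it works directly with Verdier self-duality), while your argument is more elementary and self-contained in the Thom--Mather framework. Your density argument (openness of $p$ together with density of $U-U_{n-1}$) is logically the same as the paper's sequence argument $x_k=\pi\phi(y_k)\to a$, merely phrased point-set-topologically rather than sequentially. You also explicitly address orientability of $F$, which the paper's displayed proof leaves implicit even though the normal-bundle orientability hypothesis is precisely what makes it work; this is a welcome addition.
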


\begin{proof}
We show that $F$ is a pseudomanifold.
Let $X_j$ be the union of strata $S$ of $X$ such that $\dim S \leq j$.
As $X$ is a pseudomanifold, we have
$X=X_n$, $X_{n-1} = X_{n-2}$, where $n$ is the dimension of $X$.
Furthermore, $X-X_{n-1}$ is open and dense in $X$.
Setting $F_j := F \cap X_j$, we obtain a filtration
$F_n \supset F_{n-1} \supset \cdots \supset F_0$ of $F$.
This filtration satisfies
\[ F = F \cap X = F \cap X_{n} = F_n \]
and
\[ F_{n-1} = F \cap X_{n-1} = F \cap X_{n-2} = F_{n-2}, \]
as required.
The set $F-F_{n-1}$ is open in $F$, since it is the intersection of $F$
with the set $X-X_{n-1},$ which is open in $X$.
Regarding the pseudomanifold property, it remains to be shown
that $F-F_{n-1}$ is dense in $F$.
Let $a\in F$ be any point.
We shall construct a sequence $(x_k) \subset F- F_{n-1}$ that
converges to $a$ as $k\to \infty$.
By the strong normal non-singularity assumption, $F$ has a tubular neighborhood $U$ in $X$.
This is an open subset of $X$, and there is a stratified diffeomorphism $\phi: U \to E$
from $U$ onto the total space $E$ of a 
Thom-Mather-vector bundle 
$\pi: E\to F$ over $F$ so that on $F,$
$\phi$ is the zero section embedding $i:F \subset E$. 
Since $X-X_{n-1}$ is dense in $X$, there exists a sequence
$(y_k) \subset X-X_{n-1}$ such that $y_k \to a$ as $k\to \infty$.
As $a\in F \subset U,$ $y_k \to a,$ and $U$ is open in $X$, we may assume
that $(y_k) \subset U$.
The set $U$ is filtered by $U_j = U \cap X_j$, while the total space
$E$ is filtered by $E_j := \pi^{-1} (F_j)$.
Since $\phi$ preserves strata, we have $\phi (U_j) = E_j$ for every $j$, and
\[ \phi (U-U_{n-1}) = E-E_{n-1} = \pi^{-1} (F) - \pi^{-1} (F_{n-1}) = \pi^{-1} (F-F_{n-1}). \]
The points $y_k$ lie in
\[ U \cap (X-X_{n-1}) = U - U_{n-1}. \]
Consequently, $\phi (y_k) \in \pi^{-1} (F-F_{n-1})$.
We set $x_k := \pi \phi (y_k)$.
Then $(x_k) \subset F-F_{n-1}$, and since $\phi (a) = i(a),$
\[ x_k = \pi \phi (y_k) \to \pi \phi (a) = \pi i(a) = a,  \]
as required. Thus $F$ is indeed a pseudomanifold.

Note that since $\phi$ is \emph{strongly} normally non-singular
(Definition \ref{def.stronglynormnonsing}), $F$ is in
particular equipped with Thom-Mather control data, whose pullback
to $E$ under $\pi$ agrees under $\phi$ with the control data induced on $U$
by the control data on $X$.

It remains to verify the Witt condition.
This condition is local. Thus open subsets of Witt
spaces are Witt spaces. Hence $U$ is a Witt space.
The Witt condition is preserved by the stratified diffeomorphism $\phi$. Therefore,
$E$ is a Witt space. But $E$ is locally trivial as a vector bundle over $F$. So $F$ can be
covered by open subsets $V_\alpha$ so that $\pi:E\to F$ is covered by open sets 
$\pi^{-1} (V_\alpha)$
that are stratified diffeomorphic 
to $V_\alpha \times \mathbb{R}^k$ over $V_\alpha$.
Since $E$ is Witt, every open subset $\pi^{-1} (V_\alpha)$ is Witt. Therefore, 
every $V_\alpha \times \mathbb{R}^k$ is Witt.
Now, the Witt condition ``desuspends''; this concept has been introduced by 
the first named author in  Lemma 14.1 of \cite{banagl-equivlclass}. By this desuspension principle, $V_\alpha$ is Witt.
So $F$ is covered by open subsets each of which is Witt. Consequently, $F$ itself is Witt.
\end{proof}

\noindent
We also have  the following result:
\begin{proposition}\label{cor:even}
We make the assumptions of Proposition \ref{thm:F}. 
Assume morever that $X$ is $G-$equivariantly oriented and that dim $X$ is even.
Then, all the connected components of $X^g$ are also even dimensional.
\end{proposition}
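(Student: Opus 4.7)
The plan is to localize the parity question to a single point in the regular part of $F$, where the smooth Atiyah--Singer-style slice argument applies verbatim. Let $F$ be a connected component of $X^g$ with $G$-equivariant Thom--Mather normal bundle $\pi:E_F\to F$ and $G$-equivariant stratified diffeomorphism $\phi:U\to E_F$ from a $G$-invariant open neighborhood $U$ of $F$ in $X$. By Proposition \ref{thm:F}, $X^g$ is a (Thom--Mather) pseudomanifold, hence locally connected; consequently, connected components are open and closed in $X^g$, and after shrinking $U$ I may assume $U\cap X^g=F$.

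Next I pick a point $p$ in the regular part $F^\circ=F-F_{n-1}$, which is dense in $F$ by the pseudomanifold property. Since $F^\circ=F\cap(X-X_{n-1})$ lies in the top smooth stratum of $X$, a neighborhood $V$ of $p$ in $F^\circ$ together with $\pi^{-1}(V)$ transports via $\phi^{-1}$ onto an open subset of $X-X_{n-1}$; this inherits the given $G$-invariant orientation and a smooth $g$-action. At $p$ the splitting $T_p X = T_p F^\circ \oplus (E_F)_p$ holds, and because $E_F$ is a $G$-equivariant vector bundle whose base point $p$ is fixed, $g_\ast$ restricts to the identity on $T_p F^\circ$ and to the linear fiber action $g|_{(E_F)_p}$ on the normal summand.

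After averaging a bundle metric over $G$, the fiber action $g|_{(E_F)_p}$ is orthogonal. Orientation preservation of $g$ on the smooth manifold around $p$ gives $\det(g_\ast|_{T_p X})>0$, and combined with $\det(g_\ast|_{T_p F^\circ})=1$ this forces $\det(g|_{(E_F)_p})>0$; orthogonality then promotes this to $\det(g|_{(E_F)_p})=+1$. Moreover $g|_{(E_F)_p}$ cannot have $+1$ as an eigenvalue: if $v\neq 0$ were a $+1$-eigenvector, then the points $(p,tv)\in E_F\setminus F$ would be $g$-fixed for all small $t\neq 0$, contradicting $U\cap X^g=F$ after transport through $\phi$.

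I would then invoke the real Jordan decomposition of an orthogonal map: with no $+1$-eigenvalue, it splits into a block $-\operatorname{Id}_k$ and planar rotation blocks, and the constraint $\det=+1$ forces $k$ to be even, while the rotation blocks are automatically even-dimensional. Hence $\operatorname{rank}E_F$ is even, so $\dim F=\dim X-\operatorname{rank}E_F$ is even as well. The only genuinely subtle point in the whole argument is the first one, namely verifying that the stratified tube allows a clean reduction to the smooth slice calculation at $p\in F^\circ$; once that reduction is in place, the rest is the classical linear-algebra fact used already by Atiyah and Singer.
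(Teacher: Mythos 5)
Your proof is correct and follows the same basic template as the paper's, namely reducing to an orthogonal linear-algebra computation at a point of the regular part of $F$, but the dimension count is organized differently. The paper analyzes $Dg(x)$ acting on the full tangent space $T_x(X\setminus X_{n-1})$: it shows the multiplicity $l$ of the eigenvalue $1$ is even (using that $\dim X$ is even, that the multiplicity $k$ of $-1$ is even by orientation preservation, and that rotation blocks contribute an even number), and then uses the $G$-invariant wedge metric and the exponential map, following Atiyah--Singer \cite[p.~537]{ASII}, to identify $\dim F$ with $l$. You instead exploit the strong normal non-singularity hypothesis more directly: via the $G$-equivariant tubular diffeomorphism $\phi$ you split $T_p X \cong T_p F^\circ \oplus (E_F)_p$ as a $g$-module, show that $g|_{(E_F)_p}$ has no $+1$-eigenvalue, and conclude that $\operatorname{rank} E_F$ is even, hence so is $\dim F = \dim X - \operatorname{rank} E_F$. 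The two routes prove the same parity statement; yours makes explicit the fact that $g$ has no fixed normal directions, which the paper leaves implicit in the exponential-map step, whereas the paper's route avoids appealing to the normal bundle for the dimension count and instead invokes the classical exponential-map argument.

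One imprecision in your write-up: Proposition~\ref{thm:F} shows that each connected component $F$ of $X^g$ is a pseudomanifold, not that $X^g$ itself is one, so local connectedness of $X^g$ does not immediately follow (components could a priori accumulate). In fact you do not need to shrink $U$ so that $U\cap X^g = F$: the cleanest justification that $g|_{(E_F)_p}$ has no $+1$-eigenvector is by connectedness alone. If $v\neq 0$ were a $+1$-eigenvector at $p$, then $\{\phi^{-1}(p,tv) : 0\le t<\epsilon\}$ is a connected subset of $X^g$ containing $p\in F$, hence contained in the component $F$; transporting back through $\phi$ it would have to lie in the zero section, forcing $tv=0$ for all $t$, a contradiction. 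This removes the need for any auxiliary shrinking of $U$ or appeal to local connectedness.
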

\begin{proof} We endow $X$ with a $G$-invariant wedge metric ${\bf g}$. For $g\in G$ let $F$ be a connected component of $X^g$ and let
$x \in (F\setminus F_{n-1})$. Recall from the previous Proposition that $F$ is a pseudomanifold
so that $F\setminus F_{n-1}$ is open and dense in $F$ and, in particular, non-empty.
Moreover, clearly,  $x\in X\setminus X_{n-1}$.
 Consider the automorphism  $Dg(x): T_x  (X\setminus X_{n-1}) \to T_x  (X\setminus X_{n-1})$,
 where we have used the fact that $gx=x$.
  Observe now that $\det Dg(x)$ is equal 
to $(-1)^k$ where $k$ is the multiplicity of the eigenvalue $-1$ of $Dg(x)$ in the orthogonal group  $O(T_x (X\setminus X_{n-1}))$.
Since $Dg(x)$ preserves the orientation of 
$T_x  (X\setminus X_{n-1})$, $k$ is necessarily even and $Dg(x) \in SO(T_x  (X\setminus X_{n-1}))$. Let $l$ denotes the multiplicity of the eigenvalue $1$ of 
$Dg(x)$.
Classic diagonalization results about rotations  show 
that $ \dim \, T_x  (X\setminus X_{n-1})\, - k -l$ is even. Then $l$ is even. Next, consider the exponential map,  $\exp_x: \ker ( Dg(x) - Id) \rightarrow (F\setminus F_{n-1})$; as explained in \cite[p. 537]{ASII}, 
its restriction to a small open ball centered at $0 \in \ker ( Dg(x) - Id)$ defines a diffeomorphism onto a small neighborhood of $x \in  F\setminus F_{n-1}$.
From this we deduce that the Witt space $F$ is of even dimension equal to $l$.
\end{proof}

\section{Transversality and fixed point sets}\label{sect:transversality}

Before stating the next Theorem, which will give a large class of examples 
of $G-$equivariant strongly normally non-singular inclusions, we 
introduce some notations about tubular neighborhoods. Let $G$ be a compact Lie group.
Consider a smooth closed $G-$manifold $M$ endowed with a $G-$invariant metric $h$, $N \subset M^G $ a closed  $G-$invariant submanifold of $M$.
Denote by $\pi: E \rightarrow N$ the real vector bundle $(T M / TN )_{| N}$, endow it with the fiberwise $G-$equivariant euclidean norm $ || \cdot ||$ induced by $h$. Of course, $E \rightarrow N$ is naturally a $G-$equivariant euclidean real vector bundle.
For each $\epsilon >0$, 
denote by $E_\epsilon$ the set of vectors $e \in E$ such $ || e || < \epsilon$. For $\epsilon$ small enough, we can fix 
a $G$-equivariant tubular diffeomorphism:
$$
\phi :  E_\epsilon \rightarrow U_\epsilon
$$ onto an open neighborhood $U_\epsilon$ of $N$ in $M$ such that $\phi \circ \pi (e)= \pi (e) \in N$ for all $e \in E_\epsilon$.
Actually, using the identification $(T M / TN )_{| N} \simeq TN^{\perp} \subset TM_{ | N}$, we can choose $\phi$ to be induced by the restriction of the exponential map to $TN^{\perp}$.  
We shall also need to consider the closed ball bundle $\bar{E_\epsilon} \rightarrow N$ where $\bar{E_\epsilon}$ denotes the set of vectors $e \in E$ such $ || e || \leq \epsilon$. Observe that the manifold with boundary $\bar{E_\epsilon}$ is a $G$-equivariant Whitney stratified subset of $M$, with strata $\bar{E_\epsilon}\setminus \partial \bar{E_\epsilon}$ and $\partial \bar{E_\epsilon}$.
If $\epsilon >0$ is small enough, we can assume that $\phi$ induces a $G-$stratified diffeomorphism, still denoted $\phi$,
onto the closure of $U_\epsilon $ in $M$: $\phi: \bar{E_\epsilon } \rightarrow \bar{U_\epsilon}$.

\begin{theorem} \label{thm:normal} Let $G$ be a compact Lie group. Consider $Y\subset M $ a closed $G$-invariant  Whitney 
stratified subset of $M$ (as in the statement of Theorem \ref{PW-1}). Assume that $N\subset M^G$ is transverse to each stratum of $Y$. Then $Y \cap N$ is a closed $G-$equivariant Whitney stratified subset of $M$ and 
the inclusion $Y \cap N \subset Y$ is $G-$equivariant strongly normally non-singular
with normal (Thom-Mather) vector bundle obtained by restricting the smooth normal bundle
of $N$ in $M$.
\end{theorem}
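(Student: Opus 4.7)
The plan has three ingredients. First, I would show that $\{S \cap N : S \in \mathcal{S}_Y\}$ is a $G$-invariant Whitney stratification of $Y \cap N$: the $G$-invariance follows from $N \subset M^G$ and the $G$-invariance of each stratum $S$, transversality makes every $S \cap N$ a smooth submanifold, and Whitney's conditions (a) and (b) are preserved under transverse intersection by a classical linear-algebra computation. Theorem \ref{PW-1} then equips $Y \cap N$ with $G$-equivariant Thom-Mather control data. Second, by transversality the canonical map $T_x S / T_x(S \cap N) \to T_x M / T_x N = E_x$ is an isomorphism at each $x \in S \cap N$, so the restriction $E' := E|_{Y \cap N}$ is stratum-by-stratum the smooth normal bundle of $S \cap N$ in $S$; equipped with the stratification pulled back from $Y \cap N$ along $\pi$, the total space $E' \to Y \cap N$ becomes a $G$-equivariant Thom-Mather vector bundle of rank $k = \operatorname{codim}_M N$.

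The third and decisive ingredient is the construction of a $G$-equivariant stratified diffeomorphism $\Psi : E'_\epsilon \to Y \cap U_\epsilon$ fixing $Y \cap N$. One starts from the ambient $G$-equivariant tubular map $\phi : E_\epsilon \to U_\epsilon$ introduced before the theorem, together with its smooth retraction $p := \pi \circ \phi^{-1} : U_\epsilon \to N$ and its tubular function $\rho(u) := \|\phi^{-1}(u)\|$, both of which are $G$-equivariant because $N \subset M^G$ and the euclidean norm on $E$ is $G$-invariant. Transversality and compactness allow one to shrink $\epsilon$ so that $p$ is a submersion on each stratum $S \cap U_\epsilon$ of $Y$. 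Applying Theorem \ref{PW-2} with $f = p$ then supplies $G$-equivariant control data on $Y \cap U_\epsilon$ compatible with $p$, after which one integrates a controlled Euler vector field lifting the radial field on the fibres of $\pi$; Verona's smooth refinement of Thom's first isotopy lemma, performed in this equivariant controlled setting and with all auxiliary vector fields averaged against the Haar measure as in Proposition \ref{prop:metric}, produces the desired stratified diffeomorphism $\Psi$. Because $\Psi$ is assembled from the ambient $\phi$, the resulting normal bundle of $Y \cap N$ in $Y$ is precisely $E'$, which verifies Definition \ref{def:G-strong}.

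The principal obstacle is exactly the upgrade the authors flag as non-trivial: passing from the stratum-preserving homeomorphism that one obtains cheaply from Thom's first isotopy lemma (the level at which Goresky-MacPherson worked in \cite{Morse}) to a genuine stratified diffeomorphism compatible with Mather control data. Reconciling smoothness on each stratum with the compatibility relations for the tubular projections $\pi_S$ and tubular functions $\rho_S$, all while preserving $G$-equivariance, requires a careful deployment of Verona's abstract Thom-Mather framework together with Haar averaging at each intermediate step, and this is the technical heart of Section \ref{sect:transversality}.
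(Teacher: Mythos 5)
Your first two ingredients (transverse intersection yields a $G$-invariant Whitney stratification of $Y\cap N$, and the restriction $E' = E|_{Y\cap N}$ is stratum-by-stratum the normal bundle) are correct and implicit in the paper. The gap is in the third ingredient. You propose to integrate ``a controlled Euler vector field lifting the radial field on the fibres of $\pi$'', but there is no natural candidate for such a vector field on $Y\cap U_\epsilon$: the ambient radial rays $t\mapsto \phi(te)$ of the tubular neighbourhood do not stay inside $Y$, since $Y$ is not radially structured near $N$. The radial field lives on the model $E'$, not on $Y\cap U_\epsilon$, and transporting it to $Y\cap U_\epsilon$ would already require the stratified diffeomorphism you are trying to build. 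The alternative of lifting $\partial_s$ along $s=\rho^2$ on $Y\cap U_\epsilon\setminus(Y\cap N)$ and flowing inward only produces, in the limit $s\to 0$, a homeomorphism onto a cone — this is precisely the Goresky--MacPherson level in \cite{Morse} that the theorem must improve upon, and the flow does not extend smoothly across the zero section.

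The paper's mechanism for sidestepping this is the family $\Psi_t(e) = \phi(te) = \exp_{\pi(e)}(te)$ on $\bar{E_\epsilon}\times(-\delta,1+\delta)$, reparametrised to $\Phi$ on $\bar{E_\epsilon}\times\R$. Because each $\Psi_t$ is transverse to the strata of $Y$ (the restriction to $N$ is the identity for every $t$), the set $\Phi^{-1}(Y)$ is Whitney stratified in $\bar{E_\epsilon}\times\R$ and the projection $\Pi$ onto the $\theta$-factor is a proper $G$-equivariant stratified submersion. The desired diffeomorphism is then the composition of $\phi$ with the time-$1$ flow of a controlled vector field $\eta$ on $\Phi^{-1}(Y)$ satisfying $\Pi_*\eta = \partial_\theta$, which identifies the slice $\Psi_0^{-1}(Y) = \pi^{-1}(N\cap Y)\cap\bar{E_\epsilon}$ with the slice $\Psi_1^{-1}(Y) = \phi^{-1}(Y\cap\bar{U_\epsilon})$. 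Even granting this reformulation, one further refinement is essential and is absent from your sketch: the flow must fix $Y\cap N$ pointwise, which does not follow from a generic controlled lift of $\partial_\theta$. Guaranteeing it requires redefining the control data so that the retractions $\pi_{Z_1,Z_0}$ preserve $(Y\cap N)\times\R$ and satisfy $(\pi_{Z_1,Z_0})_*(0,\partial_\theta)=(0,\partial_\theta)$ along that set (Lemmas \ref{lem:S} and \ref{lem:Diag} and Proposition \ref{prop:eta}); only after this refinement does the inductive Mather--Verona construction of $\eta$, followed by Haar averaging, yield the required $G$-equivariant stratified diffeomorphism with the correct behaviour on the zero section.
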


\begin{remark} In this article we are interested in a geometric formula for the $G$-signature
computed at $g$, ${\rm Sign}(X,g)$, with $X$ a $G$-Witt pseudomanifold and $g\in G$. Our result will hold under the assumption that the inclusion $X^g\subset X$
is equivariantly strongly normally non singular.
Thus we shall ask $X^g$, $g\in G$,  to be equivariantly strongly normally non-singularly included, and not
$X^G$, as in the above Theorem. However, what we shall do eventually is to consider $G=\langle g \rangle$ and assume it compact topologically cyclic
so that $X^g=X^G$. Thus we shall be able to apply the above result with  $G=\langle g \rangle$.
\end{remark}

\begin{proof}  Our proof is inspired by  \cite[p. 48]{Morse}; however, we shall need
to improve on the result of Goresky and MacPherson and in various ways: first, 
there is no reason why the homeomorphism from a tubular neighbourhood of  $Y \cap N$ to a vector bundle 
over  $Y\cap N$ constructed in  \cite[p. 48]{Morse} should map $Y \cap N$ to the zero section of the vector bundle, whereas this is an explicit requirement in Definition \ref{def.normnonsingincl}; second,
we shall need to bring the discussion to a G-equivariant level; third, and this is the most important improvement,
we shall need to sharpen the result in 
 \cite[p. 48]{Morse} and pass from a {\it homeomorphism} to a  {\it stratified diffeomorphism}.

 \medskip
 \noindent
For $\delta >0$ small enough, consider the $G$-equivariant map:
$$
\Psi: \bar{E_\epsilon } \times (-\delta, 1+ \delta) \rightarrow M
$$ given by $$\Psi (e, t)\equiv \Psi_t (e)= \phi (t e)= {\rm Exp}_{\pi (e)} (te). $$  Here $G$ acts trivially on $(-\delta, 1+ \delta)$. 
Notice that $\Psi_1 =\phi$ and $\Psi_0= \phi\circ \pi$.\\
The Theorem will be proved in several steps:
\begin{itemize}
\item we refine Thom's first isotopy lemma in the special case of a map $\Pi$ to $\mathbb{R}$ 
and in our particular geometric situation; in this first step techniques due to Mather and Verona
will be used crucially;
\item we use this refined  isotopy lemma 
in order to construct a $G$-equivariant  stratified diffeomorphism 
between $\Psi_0^{-1} (Y)$, that is, $\pi^{-1} (N \cap Y) \cap \bar{E_\epsilon}$   and $\Psi_1^{-1} (Y)$, that is $ \phi^{-1}(Y \cap \bar{U_\epsilon})$;
\item notice that the above steps also  involve the {\em definition} of suitable control data;
\item we apply $\phi$ so as to have a stratified diffeomorphism between 
$\pi^{-1} (N \cap Y) \cap \bar{E_\epsilon}$ and $Y \cap \bar{U_\epsilon}$; this is 
the stratified diffeomorphism that allows us to conclude that
the inclusion $Y \cap N \subset Y$ is $G-$equivariant strongly normally non-singular
with normal (Thom-Mather) vector bundle obtained by restricting the smooth normal bundle
of $N$ in $M$.
\end{itemize}

\medskip
We begin to discuss these various steps. Since  $N$ is transverse to the strata of $Y$ and the restriction to $N$  of each map 
$\Psi_t$ is the identity then,  for $\delta >0$ small enough, each map $\Psi_t$ 
is transverse to each stratum of $Y$.  
Therefore,  see \cite{Goresky}, $\Psi^{-1}(Y)$ is a $G$-Whitney stratified subset of 
$\bar{E_\epsilon } \times (-\delta, 1+ \delta)$ with strata of the form $Z= \Psi^{-1}(S)$, $S$ a stratum of $Y$. 
Moreover,  the projection $P_2: \Psi^{-1}(Y)\to(-\delta, 1+ \delta)$  onto the second factor, 
$(e , t) \mapsto t$, defines a proper $G-$equivariant map and a stratified submersion.  The fact that 
$(P_2)_{| Z}$ is a submersion for $t$ close to zero is a subtle consequence of the fact that $N$ is transverse to the strata of $Y$ and we briefly outline the argument.
Since $S$ is transverse to $N$, for each point $y_0$ of $N \cap S$ there exists  local coordinates  $y$ of $N$  
 and local normal coordinates $\xi$ (near $0$) such that near the point $y_0$,  $S$ is locally parametrized by:
$$
(y_I, \xi) \mapsto {\rm Exp}_{( y_I,\, y_{II}(y_I\,, \,\xi)\,)} \, \xi \,
$$ where $y=(y_I, y_{II})$ and $(y_I, \xi) \mapsto y_{II}(y_I\,, \,\xi)$ is a suitable smooth function.   Therefore,  near a point $(y_0, \xi_0,0)$ of $Z= \Psi^{-1}(S)$, $Z$ is locally the set of points of the form
by $(y_I, y_{II}(y_I ,  t \xi) ;  \xi, t)$ where $t$ is a free variable near $0$. Thus $(P_2)_{| Z}$ is a submersion as announced.\\
Actually, it will be more comfortable to replace $ (-\delta, 1+ \delta)$ by $\R$, 
so we introduce  a smooth diffeomorphism $\chi :  \R \rightarrow (-\delta, 1+ \delta)$ and consider the map
$$
\Phi:\bar{E_\epsilon }\times \R \rightarrow M \; 
$$ given by $\Phi( e, \theta) = \Psi (e , \chi(\theta) )$. Consider then the projection onto the second factor given by
$\Pi: \Phi^{-1}(Y) \mapsto \R$, $\Pi (e, \theta)= \theta$.
As above $ \Phi^{-1}(Y)$ is  a $G-$Whitney stratified subset of 
$\bar{E_\epsilon } \times \R$ with strata of the form $Z=  \Phi^{-1}(S)$ where $S$ is a stratum of $Y$ and $\Pi$ is a $G-$equivariant proper stratified submersion. Morever, if $ x \in Y \cap N$ then 
$\Phi^{-1}\{x\}$ contains $\{x\} \times \R$ so that $\Pi$ is surjective.

Now we apply  \cite[Theorem 2.12]{PW} and  
Mather \cite[Proposition 7.1]{Mather}, 
with $M= \bar{E_\epsilon } \times \R$, $P=\R$,  $f=\Pi$, $S= \Phi^{-1}(Y)$, and obtain
   the existence of a family of $G-$equivariant control data $\{ T_Z, \pi_Z, \rho_Z, Z\, \text{stratum} \}$ on $\Phi^{-1}(Y)$ which are compatible with 
$\Pi$ in the sense that $\Pi\circ \pi_Z = \Pi $ on $ T_Z $ for each stratum $Z$ of $\Phi^{-1}(Y)$.\\
The stratified diffeomorphism appearing implicitly in the statement of Theorem 
\ref{thm:normal} will be obtained by integrating a stratified controlled vector field on $\Phi^{-1}(Y)$. In order to
define this vector field and study its properties we 
need to refine the control data that we have just defined. To this end we establish the next two Lemmas.

\begin{lemma} \label{lem:S} There exists a $G-$equivariant control data system $( T_S, \pi_S, \rho_S)$ on $Y$ such that the following is true.
Consider two strata $S_1 < S_0$ of $Y$, then,
$\pi_{S_1, S_0}$ sends $T_{S_1} \cap S_0 \cap N$ into $S_1 \cap N \subset Y \cap N$ where 
$\pi_{S_1, S_0} : T_{S_1} \cap S_0 \rightarrow S_1$ denotes the restriction of $\pi_{S_1}$.
Thus, the so-called $\pi-$fibre condition of \cite[Section 4.1]{Goresky} is satisfied 
for $N\cap Y$ in $Y$. 
\end{lemma}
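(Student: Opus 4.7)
The plan is to refine the Pflaum--Wilkin construction of equivariant control data so that the tube projections on $Y$ become compatible with the embedded submanifold $N$. The geometric input is transversality of $N$ with each stratum of $Y$: this yields that $Y\cap N$ is a $G$-invariant Whitney stratified subset of $N$ with strata $S\cap N$, and, along $S\cap N$, the smooth normal bundle $\nu^N_{S\cap N}$ of $S\cap N$ in $N$ embeds as a $G$-equivariant subbundle of the restriction to $S\cap N$ of the smooth normal bundle $\nu_S$ of $S$ in $M$. Since $N\subset M^G$, all these bundles and splittings can be taken $G$-equivariant by Haar averaging.

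First I would fix a $G$-invariant Riemannian metric $h$ on $M$ for which $N$ is totally geodesic. Such an $h$ is constructed by starting from any $G$-invariant metric, altering it in a $G$-invariant tubular neighborhood of $N$ so that the $h$-exponential of the normal bundle of $N$ agrees with the given tubular diffeomorphism, and re-averaging over $G$. For each stratum $S\in\Sa$, I would choose a $G$-equivariant smooth splitting $\nu_S|_{S\cap N}=\nu^N_{S\cap N}\oplus\nu'$, extend it $G$-equivariantly to a neighborhood of $S\cap N$ in $S$, and define a tube $(T_S,\pi_S,\rho_S)$ by composing this splitting with the $h$-exponential, with $\rho_S$ the squared $h$-norm. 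The $\pi$-fibre condition then follows from the totally geodesic property of $N$: if $x\in T_{S_1}\cap S_0\cap N$, write $x=\exp^h_y(v)$ with $y\in S_1$ and $v\perp_h T_y S_1$; transversality together with the subbundle choice force $y\in S_1\cap N$ and $v\in\nu^N_{S_1\cap N}\subset T_yN$, so the whole geodesic from $y$ to $x$ lies in $N$ and $\pi_{S_1}(x)=y\in S_1\cap N$.

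The standard Mather identities $\pi_{S_1}\pi_{S_0}=\pi_{S_1}$ and $\rho_{S_1}\pi_{S_0}=\rho_{S_1}$ on overlaps of tubes are then imposed by the usual inductive adjustment ordered by depth, with $G$-equivariance preserved at each step by Haar averaging exactly as in \cite[Theorem 2.12]{PW}. The hard part will be keeping all three requirements---$G$-equivariance, the Mather identities between strata of different depth, and the $\pi$-fibre condition with respect to $N$---simultaneously satisfied throughout the induction, because an \emph{a posteriori} modification of a tube at a higher-depth stratum could in principle destroy the $N$-compatibility already achieved at deeper levels. I would arrange compatibility by first running the Pflaum--Wilkin construction on $Y\cap N\subset N$ (Theorem \ref{PW-1}) to obtain $G$-equivariant control data already satisfying the Mather identities there, and then extending $G$-equivariantly across the $\nu'$-summands, which are transverse to $N$: the inductive perturbations are confined to these complementary directions, so $N$-points are never moved out of $N$, and the Mather identities on $Y$ then follow from the ones on $Y\cap N$ together with the product structure of the tubes along $N$.
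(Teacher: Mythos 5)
Your construction runs into a concrete difficulty precisely at the point you yourself flag as ``the hard part.''  The paper's own proof is essentially a citation (Mather's Cor.~10.4 and Prop.~7.1, made equivariant as in Pflaum--Wilkin, ``using the fact that $N\subset M^G$''), so a direct line-by-line comparison is not possible; but your attempt to supply the missing details contains a gap that I do not think your argument closes.

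The problem begins with the splitting $\nu_S|_{S\cap N}=\nu^N_{S\cap N}\oplus\nu'$.  Transversality of $N$ with $S$ gives $T_yS+T_yN=T_yM$ for $y\in S\cap N$, so the natural map $TN/T(S\cap N)\to TM/TS$ is an isomorphism of $G$-bundles over $S\cap N$, not just a monomorphism.  In other words $\nu^N_{S\cap N}\cong\nu_S|_{S\cap N}$ and $\nu'=0$.  As a result, the final sentence of your proposal --- ``the inductive perturbations are confined to these complementary directions, so $N$-points are never moved out of $N$'' --- is vacuous: there are no complementary directions in which to confine anything, and no ``product structure of the tubes along $N$'' in the sense your argument invokes.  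You correctly identify that the delicate point is to run Mather's inductive adjustments (to enforce $\pi_{S_1}\pi_{S_0}=\pi_{S_1}$ and $\rho_{S_1}\pi_{S_0}=\rho_{S_1}$) without breaking the $\pi$-fibre condition already secured at deeper strata, but the mechanism you propose to control this rests on the nonexistent summand $\nu'$.  What one actually has to check is that the deformations of the tube maps used in Mather's Proposition~7.1 can themselves be taken to preserve the submanifold $N$ --- for instance by carrying them out by flows of controlled vector fields tangent to $N$ --- and this requires an argument, not merely the observation that the normal directions to $N$ are transverse to $N$.

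A second, smaller issue is the use of the $h$-exponential.  Even after arranging $N$ to be totally geodesic, the $h$-orthogonal complement of $T_yS$ at a point $y\in S\cap N$ need not lie in $T_yN$; a strictly transverse but non-orthogonal intersection (e.g.\ $M=\R^3$, $N=\{z=0\}$, $S=\{z=x\}$ with the Euclidean metric) gives normal rays leaving $N$.  So ``composing the splitting with the $h$-exponential'' cannot mean exponentiating the $h$-orthogonal complement.  One must pick a smooth $G$-equivariant complement $\tilde\nu_S\subset TM|_S$ of $TS$ whose restriction to $S\cap N$ equals the image of $\nu^N_{S\cap N}$ inside $TN$, and only then does total geodesy of $N$ make $\exp^h(\tilde\nu_S|_{S\cap N})\subset N$; the $\pi$-fibre condition then follows by uniqueness of the tubular decomposition.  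Your phrase ``with $\rho_S$ the squared $h$-norm'' together with ``the $h$-exponential'' reads as if you intend the orthogonal tube, which does not work.

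Finally, it is worth noting that for the special (and, in all of the paper's examples, actual) case $N=M^G$, the conclusion of the lemma is immediate from $G$-equivariance of the Pflaum--Wilkin control data alone: if $x\in T_{S_1}\cap M^G$ then $g\,\pi_{S_1}(x)=\pi_{S_1}(gx)=\pi_{S_1}(x)$ for all $g\in G$, whence $\pi_{S_1}(x)\in M^G\cap S_1=N\cap S_1$, with no metric adaptation needed.  Your approach does not exploit this, and for the general $N\subsetneq M^G$ case the inductive step needs to be repaired along the lines sketched above before the argument is complete.
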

\begin{proof} One proceeds easily along the lines of the proof of 
\cite[Cor. 10.4 ; Prop. 7.1]{Mather}, but done  equivariantly
as in \cite{PW}, using the fact that $N \subset M^G$ 
and that $Y$ is a $G-$equivariant closed Whitney subset of $M$. 
\end{proof}
\begin{lemma} \label{lem:Diag} We can choose  the control data  $G-$equivariant $( T_Z, \pi_Z, \rho_Z)$of $\Phi^{-1}(Y)$ (compatible with $\Pi$) so that the following is true (for $\epsilon >0$ small enough).
Consider  two strata $S_1 < S_0$ of $Y$, then 
the following diagram is commutative: 
\label{diag:control} \[  \xymatrix   { T_{Z_1 } \cap (Z_0 ) \ar[d]^{\pi_{Z_1, Z_0 }} \ar[r]^{\Phi}&  \ar[d]^{\pi_{S_1, S_0}} T_{S_1} \cap S_0   \\
Z_1 \ar[r]^{\Phi} & S_1 \,,
}\]  
where $Z_1= \Phi^{-1}(S_1) < Z_0= \Phi^{-1}(S_0)$, $T_{Z_1} = \Phi^{-1}(T_{S_1})$  and, $ \pi_{S_1, S_0}: T_{S_1} \cap S_0 \rightarrow S_1$ denotes the restriction of the retraction of the corresponding control data and similarly for $Z_j$ instead of $S_j$. 
\end{lemma}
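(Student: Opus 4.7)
The plan is to pull back, via $\Phi$, the control data on $Y$ coming from Lemma~\ref{lem:S}, and then to check the compatibility diagram, the usual Thom--Mather axioms, and $\Pi$-compatibility. Fix a $G$-equivariant system $(T_S,\pi_S,\rho_S)_{S\in\Sa}$ on $Y$ satisfying the $\pi$-fibre condition for $N\cap Y$ in $Y$. For each stratum $Z_1=\Phi^{-1}(S_1)$, set $T_{Z_1}:=\Phi^{-1}(T_{S_1})$ and $\rho_{Z_1}:=\rho_{S_1}\circ\Phi$; these are $G$-invariant, and $\rho_{Z_1}^{-1}(0)=Z_1$. The retraction $\pi_{Z_1}\colon T_{Z_1}\to Z_1$ is then forced by the commutative diagram together with $\Pi$-compatibility: we define $\pi_{Z_1}(e,\theta):=(e'(e,\theta),\theta)$, where on the open locus $\{\chi(\theta)\neq 0\}$ the vector $e'(e,\theta)\in\bar E_\epsilon$ is uniquely characterized by $\chi(\theta)\,e'(e,\theta)=\phi^{-1}(\pi_{S_1}(\Phi(e,\theta)))$, scalar multiplication being fibrewise. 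On this open set $e'$ is smooth and $G$-equivariant since $\phi,\pi_{S_1},\Phi$ are (the $G$-action is trivial on $\R$).

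The hard part is the smooth extension of $e'(e,\theta)$ across the hypersurface $\{\theta=\theta_0\}$, $\theta_0:=\chi^{-1}(0)$, where the explicit formula becomes $0/0$. Here the $\pi$-fibre condition of Lemma~\ref{lem:S} is crucial: since $\Phi(e,\theta_0)=\pi(e)\in N$, the point $\pi_{S_1}(\Phi(e,\theta_0))$ lies in $S_1\cap N$, so $\phi^{-1}(\pi_{S_1}(\Phi(e,\theta_0)))$ is the zero vector of $E$ at $\pi_{S_1}(\pi(e))$. Working locally in bundle coordinates $(y,v)\in N\times\R^k$ on $E$ near a point of the zero section over $N\cap S_1$, the fibre component $v(e,\theta)$ of $\phi^{-1}(\pi_{S_1}(\Phi(e,\theta)))$ vanishes identically on $\{\theta=\theta_0\}$ and hence factors smoothly as $v(e,\theta)=(\theta-\theta_0)\tilde v(e,\theta)$; similarly $\chi(\theta)=(\theta-\theta_0)\tilde\chi(\theta)$ with $\tilde\chi(\theta_0)=\chi'(\theta_0)\neq 0$. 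The fibre component of $e'(e,\theta)$ is then $\tilde v(e,\theta)/\tilde\chi(\theta)$, which is smooth and provides the desired extension of $e'$.

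It remains to check the remaining axioms. The commutativity of the diagram in the statement is built into the definition of $e'$, and $\Pi\circ\pi_{Z_1}=\Pi$ is immediate since $\pi_{Z_1}$ preserves $\theta$. The idempotency $\pi_{Z_1}\circ\pi_{Z_1}=\pi_{Z_1}$ and the tower identities $\pi_{Z_1}\circ\pi_{Z_2}=\pi_{Z_1}$, $\rho_{Z_1}\circ\pi_{Z_2}=\rho_{Z_1}$ for $Z_1<Z_2$ hold on the dense open set $\{\chi(\theta)\neq 0\}$ by a direct computation from the explicit formula and the corresponding axioms for $\pi_S,\rho_S$, and then extend everywhere by continuity. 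This produces the required $G$-equivariant, $\Phi$-lifted family of control data $(T_Z,\pi_Z,\rho_Z)$ on $\Phi^{-1}(Y)$.
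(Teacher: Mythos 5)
Your proof takes essentially the same approach as the paper's: both set $T_{Z_1}=\Phi^{-1}(T_{S_1})$ and $\rho_{Z_1}=\rho_{S_1}\circ\Phi$, exploit the $\pi$-fibre condition of Lemma~\ref{lem:S} to see that the fibre component of $\phi^{-1}\circ\pi_{S_1}\circ\Phi$ vanishes on $\{\chi(\theta)=0\}$, divide out $\chi(\theta)$ by a Hadamard-type factorization to produce $\pi_{Z_1}$, and then verify the tower identities and $\Pi$-compatibility on the dense open set $\{\chi(\theta)\neq 0\}$ and extend by continuity. Two presentational differences are worth pointing out. You characterize $e'$ coordinate-freely by $\chi(\theta)\,e'=\phi^{-1}\circ\pi_{S_1}\circ\Phi$, so that independence of local coordinates is automatic on $\{\chi(\theta)\neq 0\}$; the paper instead works in explicit normal coordinates $(y_I,\xi,\theta)$ and must then devote a separate paragraph to showing that $\pi_{Z_1,Z_0}$ is intrinsically defined. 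Also, your Hadamard factorization is in the single variable $\theta-\theta_0$, whereas the paper factors $b(y_I,\eta)=\sum_j \eta_j b_j(y_I,\eta)$ in the normal variables and then substitutes $\eta=t\xi$; both achieve the same division by $t=\chi(\theta)$.

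One point you gloss over: your Hadamard argument yields smoothness of $\pi_{Z_1}$ on each stratum $T_{Z_1}\cap Z_0$ separately (which is all it can do, since $\pi_{S_1}$ is only smooth stratum by stratum), but you do not address the continuity of the assembled retraction $\pi_{Z_1}$ on all of $T_{Z_1}$, i.e. that its values along sequences approaching the locus $\{\theta=\theta_0\}$ from different strata converge to the correct limit. The paper does discuss this, invoking the commutativity of the diagram and, at $\theta=\theta_0$, the transversality of $\widetilde{S}$ to $N$; you should supply at least a sentence to the same effect, since on $\{\theta=\theta_0\}$ the characterizing equation degenerates to $0=0$ and the extension is not trivially continuous across strata.
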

\begin{proof}  We use the local coordinates $(y ;  \xi)$ given by $(y, \xi) \mapsto {\rm Exp}_y \xi$ , so $N$ is defined locally by 
$\xi=0$. Since $N$ is transverse to $S_0$, locally  near $N$, $S_0$ admits a parametrization of the form: 
$$
(y_I, \xi) \mapsto {\rm Exp}_{(y_I, y_{II}( y_I , \xi))}  (\xi)\, .
$$
Then  $Z_0= \Phi^{-1}(S_0)$ admits the local parametrization:
$$
(y_I ; \xi, \theta ) \mapsto (y_I, y_{II}( y_I , t \xi) ;  \xi , \theta)\, ,
$$ where $t= \chi(\theta)$. 

Now $\pi_{S_1}$ sends a point in $T_{S_1}\cap S_0$ of coordinates $(y_I, t\xi)$ (that is the point ${\rm Exp}_{(y_I, y_{II}( y_I , t \xi) )}  (t \xi)$)  to a point in $S_1$ of the form
${\rm Exp}_{a(y_I, t \xi)} b( y_I, t \xi)$, for suitable smooth functions $a$ with values in $N$ and  $b(y_I, t \xi)$ with values 
in the normal fiber over the point  $a(y_I, t \xi)\in N$.
 By Lemma \ref{lem:S} ,  $b(y_I , 0) \equiv 0$ 
so that $ b( y_I, t \xi) = \sum_{j=1}^k t \xi_j b_j( y_I , t \xi) $ where $k$ denotes the dimension of the fiber of 
the vector bundle $E \rightarrow N$.
Now, using these  local coordinates we define $\pi_{Z_1, Z_0}$ by the formula:
$$
\pi_{Z_1, Z_0}  (y_I, y_{II}( y_I , t \xi) ;  \xi , \theta)\,=\, ( a(y_I, t \xi) ; \sum_{j=1}^k  \xi_j b_j( y_I , t \xi) , \theta)
$$ where the right hand side is indeed in $Z_1$, given that $Z_1=\Phi^{-1}(S_1)$.\\
Now let us explain briefly why $\pi_{Z_1, Z_0} $ is intrinsically defined,  independently of the choice of coordinates. Consider another local parametrization:
$$
(y'_I ; \xi', \theta ) \mapsto (y'_I, y'_{II}( y'_I , t \xi') ;  \xi' , \theta)\, ,
$$ where $t= \chi(\theta)$. Then, as before, $\pi_{S_1}$ sends a point in $T_{S_1}\cap S_0$ of coordinates $(y'_I, t\xi')$ (that is the point ${\rm Exp}_{(y'_I, y'_{II}( y'_I , t \xi') )}  (t \xi')$)  to a point in $S_1$ of the form
${\rm Exp}_{a'(y'_I, t \xi')} b'( y'_I, t \xi')$, for suitable smooth functions $a'$ with values in $N$ and  $b'(y'_I, t \xi')$ with values 
in the normal fiber over the point  $a'(y'_I, t \xi')\in N$. But, on the overlap of the two charts, given that $\pi_{S_1}$
is intrinsically defined, we obtain:
$$
a'(y'_I, t \xi') = a(y_I, t \xi)\;,\; b'( y'_I, t \xi')= b( y_I, t \xi) \,,
$$ and thus $ \sum_{j=1}^k  \xi'_j b_j( y'_I , t \xi')  =\sum_{j=1}^k  \xi_j b_j( y_I , t \xi)  $ in a first step 
for $t\not= 0$ and then for $t=0$ by continuity. This proves that $\pi_{Z_1, Z_0}$ is intrinsically defined.

Now, for $v\not\in Z_1,$ but close to $Z_1$ and belonging to the union of  the strata dominating $Z_1$, we define
$\pi_{Z_1} (v) := \pi_{Z_1, Z} (v),$
where $Z$ is the unique stratum containing $v$.
Note that this $Z$ satisfies $Z_1 < Z$, so that
$\pi_{Z_1, Z}$ is indeed defined as above.
If $v\in Z_1$, we put $\pi_{Z_1} (v) := v$.
Let us check briefly  the continuity of the map $\pi_{Z_1}$ so defined. Consider a sequence $(z_n , \theta_n)$ 
[resp. $(z'_n , \theta'_n) $] of points of $Z_0$ [resp. $Z$] converging to $(z, \theta)$. We consider only the case where 
$(z, \theta)$ belongs to the intersection of an open neighborhood of $Z_1$ with $Z_1 \cup_{Z_1 < Z'} Z'$. 
Therefore, this point $(z, \theta)$ belongs to a stratum
of the form $\widetilde{Z} = \Phi^{-1}(\widetilde{S})$ for a suitable stratum $\widetilde{S}$ of $Y$. By the frontier condition, this stratum $\widetilde{Z}$ is either equal to $Z_0$ or $Z$ or is dominated by both $Z_0$ and $Z$. By assumption on the domain of $\pi_{Z_1}$, we can assume that either $\widetilde{Z}=Z_1$ or $\widetilde{Z}$ dominates $Z_1$.
Then we have to check that the sequences $\pi_{Z_1, Z_0}(z_n , \theta_n) $ and
$\pi_{Z_1, Z}(z'_n , \theta'_n) $
both converge to $\pi_{Z_1 , \widetilde{Z}} (z, \theta)$. One proves this by using the fact that, by construction, the  diagram of Lemma \ref{lem:Diag} is commutative and, in the case where $\theta=0$, the fact that $\widetilde{S}$ is transverse to $N$. \\
By construction also,  $\pi_{Z_1, Z_0}$ is compatible with $\Pi$. 
Then, we define the control function $\rho_{Z_1}$ by:
$$
\rho_{Z_1}   (y_I, y_{II}( y_I , t \xi) ;  \xi , \theta) = \rho_{S_1} \bigl({\rm Exp}_{(y_I, y_{II}( y_I , t \xi))} t \xi \bigr)\; , \, t= \chi(\theta)\,.
$$ Using the identity $\Phi \circ \pi_{Z_1, Z_0} = \pi_{S_1, S_0} \circ \Phi$, one obtains  the equality 
$\rho_{Z_1} ( \pi_{Z_0} (v)) = \rho_{Z_1}(v)$. 
Moreover, using the identity $\pi_{S_1} \circ \pi_{S_0}= \pi_{S_1}$ (on its domain of definition) and the commutativity, 
of the diagram of the Lemma, one proves that $\pi_{Z_1} \circ \pi_{Z_0}= \pi_{Z_1}$ (on its domain of definition).
Since we have just checked that $\rho_{Z_1} \circ \pi_{Z_0}  = \rho_{Z_1}$, one then obtains immediately the Lemma. 
 \end{proof} 

\begin{proposition} \label{prop:eta}
There exists a $G-$equivariant stratified controlled smooth \footnote{this means that the restriction of $\eta$ to each stratum is smooth} vector field 
$\eta$ on $ \Phi^{-1}(Y)$ such that the following two conditions are satisfied:
\item 1] For any $z \in  \Phi^{-1}(Y)$, $\Pi_* \eta (z) = \frac{d}{d\,\theta}$ , the unit oriented vector of the real line. 
\item 2]  Consider any point $(y , \theta_0) \in  (Y \cap N) \times \R$ belonging to a stratum $Z$ of  $\Phi^{-1}(Y)$. Then, 
the whole real line $\{y \} \times \R $ is included in $Z$, 
the tangent vector $(0 , \partial_\theta) \in T_{ (y , \theta_0)} ( \bar{E_\epsilon } \times \R)$ belongs to $T_{ (y , \theta_0)} Z$ and, 
$\eta (y , \theta_0)= (0 , \partial_\theta) \,.$
 
\end{proposition}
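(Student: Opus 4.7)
The plan is to build $\eta$ by the standard Mather--Verona technique for constructing controlled vector fields that lift a prescribed smooth field on the base, with two extra constraints: $G$-equivariance and the pinning $\eta=(0,\partial_\theta)$ along the fixed line $(Y\cap N)\times\R$. The input is the family of $G$-equivariant control data $(T_Z,\pi_Z,\rho_Z)$ on $\Phi^{-1}(Y)$ from Lemma \ref{lem:Diag}, whose compatibility with $\Pi$ (i.e.\ $\Pi\circ\pi_Z=\Pi$) means that every retraction preserves the $\theta$-fibers; this is what will allow locally chosen lifts of $\partial_\theta$ on different strata to glue into a controlled vector field.

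I would first record the local picture at a point $y\in Y\cap N$. Identifying $y$ with the zero vector $0_y\in E$ over itself, one has $\Phi(0_y,\theta)=\phi(\chi(\theta)\cdot 0_y)=y$ for every $\theta$, so the entire line $\{y\}\times\R$ maps to the single point $y\in Y$; hence it is contained in a single stratum $Z=\Phi^{-1}(S)$ of $\Phi^{-1}(Y)$. In the local parametrization $(y_I;\xi,\theta)\mapsto(y_I,y_{II}(y_I,\chi(\theta)\xi);\xi,\theta)$ of $Z$ from Lemma \ref{lem:Diag}, the slice $\xi=0$ is exactly this fixed-line chart, $(0,\partial_\theta)$ is tangent to $Z$ there, and $\Pi_*(0,\partial_\theta)=\partial_\theta$. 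Thus the prescribed value of $\eta$ on $(Y\cap N)\times\R$ is smooth within its stratum and automatically satisfies condition~(1).

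Now I would run Mather's induction on depth. On the deepest strata $\Pi|_Z$ is a smooth submersion, so $\partial_\theta$ admits a smooth lift which, near the fixed line, can be adjusted to coincide with $(0,\partial_\theta)$ by a local partition of unity. Assuming $\eta$ has been defined on the union of strata of depth $>k$, controlled and pinned, I extend to a depth-$k$ stratum $Z$ as follows: near each deeper stratum $Z_1<Z$, prescribe $\eta$ on $T_{Z_1}\cap Z$ by requiring $d\pi_{Z_1}(\eta)=\eta|_{Z_1}$ and $\eta\rho_{Z_1}=0$; the identities $\pi_{Z_1}\circ\pi_{Z_0}=\pi_{Z_1}$ and $\rho_{Z_1}\circ\pi_{Z_0}=\rho_{Z_1}$ from Lemma \ref{lem:Diag} make these prescriptions mutually consistent, and a partition of unity on $Z$ glues them into a smooth lift of $\partial_\theta$ (convex combinations preserving $\Pi_*\eta=\partial_\theta$). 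The commutative diagram of Lemma \ref{lem:Diag}, together with the explicit formula for $\pi_{Z_1,Z_0}$ computed there, shows that the retractions send the fixed-line slice into itself and preserve $\theta$; consequently the pushed-up value at $(Y\cap N)\times\R\cap Z$ is again $(0,\partial_\theta)$, so the pinning (2) persists through the induction.

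Finally I would $G$-equivariantize by averaging, setting $\tilde\eta:=\int_G g_*\eta\,d\mu(g)$. Since $G$ acts trivially on $\R$, $\Pi\circ g=\Pi$ and therefore $\Pi_*\tilde\eta=\partial_\theta$; each $g_*\eta$ is controlled for the $G$-equivariant tube system of Lemma \ref{lem:Diag}, hence so is the average; and because every point of $(Y\cap N)\times\R$ is $G$-fixed with $(0,\partial_\theta)$ a $G$-invariant tangent vector, averaging leaves the pinning condition (2) untouched. The main obstacle --- and the reason for the careful preliminary work in Lemmas \ref{lem:S} and \ref{lem:Diag} --- is precisely to guarantee that the inductive tube-extension does not drag $\eta$ away from the value $(0,\partial_\theta)$ along the fixed line; once the retractions are known to preserve that line and the $\theta$-coordinate, the classical Mather controlled-lift argument runs without change, and the equivariantization is a formality.
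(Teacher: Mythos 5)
Your proposal is correct and takes essentially the same route as the paper: Mather's induction on strata with the pinning constraint propagated via the compatibility results of Lemma \ref{lem:Diag}, followed by Haar averaging over $G$. The paper isolates the crucial fact that the retractions $\pi_{Z_1,Z_0}$ preserve the fixed line $(Y\cap N)\times\R$ and push $(0,\partial_\theta)$ to itself into a standalone sub-lemma (\ref{eq:eta}), proved by a slightly more indirect limiting argument from $\Phi\circ\pi_{Z_1,Z_0}=\pi_{S_1,S_0}\circ\Phi$, whereas you read it directly off the explicit coordinate formula for $\pi_{Z_1,Z_0}$ from the proof of Lemma \ref{lem:Diag} --- both are valid and the remaining induction and averaging steps match the paper's.
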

\begin{proof} By Verona \cite[Lemma 2.4]{Verona} 
(or  \cite[Proposition 9.1]{Mather}), 
the existence of a stratified controlled vector field 
satisfying 1] is known and the $G-$equivariance of the construction can be ensured by a classic averaging technique, as we shall see later. 
Recall (\cite[Sect. 9, page 493]{Mather}) that, by definition,  
the stratified controlled vector field $\eta$
should satisfy the following two conditions with respect to two strata $Z_1 < Z_0$ of $\Phi^{-1}(Y)$ and any $v \in T_{Z_1} \cap Z_0$:
\begin{equation} \label{eq:controlled}
\begin{aligned} 
\eta_{Z_0} . \,\rho_{Z_1} (v) = 0 \\
(\pi_{Z_1, Z_0})_* \eta_{Z_0} (v) = \eta_{Z_1} (\pi_{Z_1, Z_0} (v)) \,,
\end{aligned}
\end{equation}
 where $ \rho_{Z_1} : T_{Z_1} \cap Z_0 \rightarrow \R$ denotes the tubular function and $ \pi_{Z_1, Z_0}: T_{Z_1} \cap Z_0 \rightarrow Z_1$ denotes (the restriction of) the retraction of the corresponding control data.

 About 2],  the fact that $(0 , \partial_\theta) \in T_{ (y , \theta_0)} Z$ with $Z= \Phi^{-1}(S)$, is an immediate consequence 
of tranversality; indeed $T_{ (y , \theta_0)} Z$ is the vector space of tangent vectors that are sent to $T_{\Phi(y , \theta_0)} S$ by $(\Phi)_*$ moreover one computes that $(\Phi)_* (0 , \partial_\theta) (y, \theta_0) = 0$
so that $(0 , \partial_\theta)\in T_{ (y , \theta_0)} Z$. Moreover, as $y \in S$ it is clear that $\{y\} \times \R \subset Z$.  The notation $(\Phi)_* (0 , \partial_\theta) (y, \theta_0)$ is the one adopted by Mather: if $F: X\to W$ is a smooth map, then $F_* (v_x) (x)$ is the differential of $F$ computed at $x$ and applied to the tangent vector $v_x$.\\
We come to the condition $\eta (y , \theta_0)= (0 , \partial_\theta) $ of 2], which is a non trivial additional requirement. If one  follows the proof (by induction on the dimension of the strata) of \cite[Sect. 9, page 493]{Mather} then, in order to achieve such a requirement, one has to state and check the following additional property. 
 
 \begin{lemma}
Consider $(y, \theta_0) \in \bigl( (Y \cap N)\times \R \bigr) \cap ( T_{Z_1} \cap Z_0 ),$ then:
\begin{equation} \label{eq:eta}
\begin{aligned}
\pi_{Z_1, Z_0}  (y , \theta_0) \in (Y \cap N)\times \R \\
 (\pi_{Z_1, Z_0})_*(0 , \partial_\theta)(y , \theta_0)= (0 , \partial_\theta) \,.
\end{aligned}
\end{equation} 
\end{lemma}
\begin{proof} 
Observe  that in the special case $N=M^G$ then, by the $G-$equivariance of 
$\pi_{Z_1, Z_0}$, we see that $\pi_{Z_1, Z_0}  (y , \theta_0)$ is $G-$invariant and thus belongs to $(Y \cap N)\times \R $.
Therefore one gets easily the first part of \eqref{eq:eta}. In the general case $N \subset M^G$, we have to  use Lemma \ref{lem:Diag}. Consider $(y, \theta_0) \in \bigl( (Y \cap N) \times \R \bigr) \cap ( T_{Z_1} \cap Z_0 )$, since by Lemma \ref{lem:Diag}, $T_{Z_1}= \Phi^{-1}(T_{S_1})$, we see that $\{y\} \times \R$ is included in $\bigl( (Y \cap N) \times \R \bigr) \cap ( T_{Z_1} \cap Z_0 )$. Therefore,  we are free 
to make $\theta_0$ vary, which will be crucial .
Then write $\pi_{Z_1, Z_0} (y, \theta_0) = (y', \xi', \theta_0) \in \bar{E_\epsilon} \times \R$, $y' \in Y \cap N$, where
$$
y'= y'(y, \theta_0)\,, \, \xi'= \xi'(y, \theta_0)\,.
$$
 Basically, in what follows, we will fix $y$ and let $\theta_0$ vary.  By Lemma \ref{lem:Diag} we have: 
\begin{equation}\label{eq-with-exp}
\Phi \circ \pi_{Z_1, Z_0} (y, \theta_0)= \exp_{y^\prime(y , \theta_0) } (\chi(\theta_0) \xi^\prime (y,\theta_0)) = \pi _{S_1, S_0} \circ \Phi (y, \theta_0) = \pi _{S_1, S_0} (y) \,.
\end{equation}
 So, clearly $\exp_{y^\prime(y, \theta_0)} (\chi(\theta_0) \xi^\prime(y, \theta_0))$   does not depend on $\theta_0$ and depends only on $y$.
 Since the tubular isomorphism $\phi$ (defined via the exponential map) is an isomorphism, we conclude that
      $y^\prime(y , \theta_0)$ and $ \chi(\theta_0) \xi^\prime(y, \theta_0)$
 depend only on $y$. So write $  \chi(\theta_0) \xi^\prime(y, \theta_0)= e(y)$. Then, letting $ \chi(\theta_0)$ go to zero, we obtain $e(y)=0$ because the norm of $\xi^\prime(y, \theta_0)$ remains bounded by $\epsilon$. Thus
 $\xi^\prime (y, \theta_0)=0$ for $\chi(\theta_0)\not=0$ and then also for the isolated zero $\chi^{-1}(0)$ by continuity. 
 Thus $$ \pi_{Z_1, Z_0} (y, \theta_0) = (y^\prime , \theta_0)\in (Y \cap N) \times \R  $$
 which is what we wanted to prove.\\
 Now, let us prove the  second part of \eqref{eq:eta}. 
As already remarked,
as  $y \in Y \cap N$ we have
\begin{equation}
\label{eq:theta}
\Phi_*  ( 0 , \partial_\theta) (y, \theta_0)=0
\end{equation} 
 Then, recall that by Lemma 
\ref{lem:Diag},  $\Phi \circ \pi_{Z_1, Z_0}=  \pi_{S_1, S_0} \circ \Phi$, therefore \eqref{eq:theta}
implies that $\bigl( \Phi \circ \pi_{Z_1, Z_0}\bigr)_*  ( 0 , \partial_\theta) (y, \theta_0)=0$.
Therefore, $$\Phi_* \bigl((\pi_{Z_1, Z_0})_*  ( 0 , \partial_\theta) (y, \theta_0)\bigr) (y', \theta_0) =0 \,.$$
Then, since $y' \in Y \cap N,$ one checks easily that $(\pi_{Z_1, Z_0})_* \cdot ( 0 , \partial_\theta) (y, \theta_0) = 
(0, c \partial_\theta)$ for some real number $c$. Then, given that $\pi_{Z_1, Z_0}$ is compatible 
with $\Pi$ one obtains $c=1$. The Lemma is thus proved.
\end{proof}

Now, following \cite[Proof of Prop. 9.1]{Mather} and adding \eqref{eq:eta} in the process, we can prove by induction  on the dimension of the strata the existence of a  stratified controlled vector field $\eta$ satisfying all the requirements of the Proposition, except the $G-$equivariance. But, given the $G-$equivariance of the control data and the fact that $N$ is pointwise fixed by $G$,
 we see that for any $g \in G$,  $g_* \eta$ will also satisfy  all the requirements of Proposition \ref{prop:eta} except the $G-$equivariance.
Therefore, the stratified vector field $\int_G g_* \eta \, d \mu (g) $ will be $G-$equivariant and will satisfy all the requirements of the Proposition.
\end{proof}

Denote by $\lambda^\theta (z)$ 
the flow of $\eta(z)$ on $ \Phi^{-1}(Y)$.  Since $\Pi$ is proper, the flow  $\lambda^\theta (z)$ is complete in $\theta$ for 
any $z$ (see \cite[Proof of Thm 2.6]{Verona}). The next lemma shows that this flow behaves nicely with respect to $\Pi$.
\begin{lemma} \label{lem:eta} One has:

\item 1]
 $$ \forall (z, \theta) \in \Phi^{-1}(Y)\times \R\,, \;
 \Pi(\lambda^\theta (z)) = \Pi(z) + \theta \,.
 $$ 
 
 \item 2]  Consider a point $(y , \theta_0) \in  (Y \cap N) \times \R$ belonging to a stratum $Z$ of  $\Phi^{-1}(Y)$. Then for any 
 $\theta \in \R$, 
 $$ \lambda^\theta (y , \theta_0) = (y , \theta_0 + \theta)  \in Z \; .$$
\end{lemma}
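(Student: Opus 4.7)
The plan is to deduce both parts of the lemma directly from the defining properties of the stratified controlled vector field $\eta$ established in Proposition \ref{prop:eta}, combined with two standing facts: the flow $\lambda^\theta$ is complete (already noted before the statement, via properness of $\Pi$ and \cite[Proof of Thm 2.6]{Verona}), and a stratified flow preserves strata, so that for any $z$ in a stratum $Z$ of $\Phi^{-1}(Y)$ the curve $\theta \mapsto \lambda^\theta(z)$ stays in $Z$.

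For part 1], fix $z \in \Phi^{-1}(Y)$ in a stratum $Z$. The map $\Pi|_Z$ is smooth since $\Pi$ is the restriction to $\Phi^{-1}(Y)$ of the smooth second-factor projection $\bar{E_\epsilon} \times \R \to \R$. By item 1] of Proposition \ref{prop:eta}, $\Pi_* \eta = \partial_\theta$, hence
\[
\frac{d}{d\theta}\, \Pi(\lambda^\theta(z)) \,=\, (\Pi_* \eta)(\lambda^\theta(z)) \,=\, 1.
\]
Integrating with initial condition $\lambda^0(z) = z$ gives $\Pi(\lambda^\theta(z)) = \Pi(z) + \theta$, as required.

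For part 2], let $(y,\theta_0) \in (Y \cap N) \times \R$ belong to a stratum $Z$ of $\Phi^{-1}(Y)$. By item 2] of Proposition \ref{prop:eta}, the entire real line $\{y\} \times \R$ is contained in $Z$ and $\eta(y,\theta) = (0, \partial_\theta)$ for every $\theta \in \R$. Consequently, the smooth curve $\gamma \colon \theta \mapsto (y, \theta_0 + \theta)$ lies in $Z$ and satisfies $\dot\gamma(\theta) = (0, \partial_\theta) = \eta(\gamma(\theta))$. Thus $\gamma$ is an integral curve of $\eta|_Z$ with $\gamma(0) = (y,\theta_0)$, and by the standard ODE uniqueness theorem on the smooth manifold $Z$ it must coincide with $\lambda^\theta(y, \theta_0)$.

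There is no substantive obstacle here: once Proposition \ref{prop:eta} is available, the lemma is a routine application of ODE uniqueness and the chain rule within individual strata. All the analytic delicacy — constructing $\eta$ so that it is $G$-equivariant, controlled in the sense of \eqref{eq:controlled}, compatible with $\Pi$, and takes the prescribed value $(0,\partial_\theta)$ along $(Y \cap N) \times \R$ via \eqref{eq:eta} — has already been absorbed into that proposition.
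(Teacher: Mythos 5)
Your proposal is correct and follows essentially the same route as the paper: for 1] the chain rule combined with $\Pi_*\eta = \partial_\theta$ and integration, and for 2] identifying $\theta \mapsto (y,\theta_0+\theta)$ as an integral curve of $\eta$ inside $Z$ and invoking Cauchy–Lipschitz uniqueness. You simply spell out in more detail what the paper states tersely, in particular checking explicitly that $\eta(y,\theta)=(0,\partial_\theta)$ holds along the whole line $\{y\}\times\R$, which the paper leaves implicit.
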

\begin{proof} 1]
Denote the differential of $\Pi$ computed at $\lambda^\theta (z)$ by $\Pi_* (\lambda^\theta (z))$. By the chain rule, one has:
$$
\frac{ d}{d \theta}  \Pi(\lambda^\theta (z)) = \Pi_* (\lambda^\theta (z)) \eta ( \lambda^\theta (z) ) = 1\, .
$$
This implies $ \Pi(\lambda^\theta (z)) -  \Pi(\lambda^0 (z))\,=\, \theta$, hence the desired result of 1].

\noindent 2] 
This is a consequence of Proposition \ref{prop:eta}. 2] and of the uniqueness result of the Cauchy-Lipschitz Theorem.
\end{proof}

Therefore, since the vector field  $\eta$ is controlled, for any $a, b \in \R$, the  map $M_{a,b}: \Pi^{-1}(\{a\}) \rightarrow \Pi^{-1}(\{b\})$ 
given by $M_{a,b} (z)= \lambda^{b-a}(z)$ certainly defines a $G-$equivariant homeomorphism.  In fact 
$M_{a,b}$ satisfies stronger regularity properties, as we shall now see.
Consider a stratum of $\Pi^{-1}(\{a\})$ 
of the form $S \cap \Pi^{-1}(\{a\})$ where $S$ is a stratum of $\Phi^{-1}(Y)$. Then, since $\eta_{| S}$ is smooth,  by restriction $\lambda^{b-a}$ induces a smooth map from $S \cap \Pi^{-1}(\{a\})$ onto $\lambda^{b-a} ( S \cap \Pi^{-1}(\{a\}) )$ which, by the previous lemma, is equal to 
$ S \cap \Pi^{-1}(\{b\})$. Now, following \cite[Proof of Thm 2.6]{Verona}, we may consider a controlled tubular neighborhood of 
$S \cap \Pi^{-1}(\{a\})$ of the form $T_S \cap \Pi^{-1}(\{a\})$ where $T_S$ is a controlled tubular neighborhood 
of $S$. Since the vector field $\eta$ is controlled, $\lambda^{b-a}$ defines a morphism in the sense of Verona, and we see that $M_{a,b}$ is compatible with the family of control data 
of $S \cap \Pi^{-1}(\{a\})$ and $S \cap \Pi^{-1}(\{b\})$ respectively.
Now we apply this with $a= \chi^{-1}(0)$ and $b= \chi^{-1}(1)$.
We then see that $\Pi^{-1}(\{a\}) = \pi^{-1} (N \cap Y) \cap \bar{E_\epsilon}$ and that $\Pi^{-1}(\{b\}) = \phi^{-1}(Y \cap \bar{U_\epsilon})$. 
Following \cite[p. 48]{Morse} we define:
\begin{equation} \label{eq:j}
j= \phi \circ M_{a,b}\,: \pi^{-1} (N \cap Y) \cap \bar{E_\epsilon} \rightarrow Y \cap \bar{U_\epsilon}\,.
\end{equation}
 The map $j$ so defined is a smooth stratified $G-$equivariant diffeomorphism and, lemma \ref{lem:eta}. 2] shows that 
 $j$ leaves the zero section $N \cap Y$ fixed pointwise.
 Moreover  the map $\phi $
is a smooth diffeomorphism which is the restriction of a diffeomorphism (still denoted $\phi$) defined over bigger open subsets,  so 
the map $\phi$ can be used in order to induce, from the family of control data of $\phi^{-1}(Y \cap \bar{U_\epsilon})$, a family of control data for
$Y \cap \bar{U_\epsilon}$.

At this point, the reader may complain that we have introduced $ \pi^{-1} (N \cap Y) \cap \bar{E_\epsilon}$
whereas we wanted rather $ \pi^{-1} (N \cap Y) \cap {E_\epsilon}$. To get this, we proceed briefly as follows. 
Consider $\epsilon_1 \in ]0, \epsilon[$ and define $j_{\epsilon_1}$ as in \eqref{eq:j} but with $\epsilon_1$ instead of 
$\epsilon$. Given that $j$ is an homeomorphism which extends $j_1$, it is clear that $j_1$ will send 
$\pi^{-1} (N \cap Y) \cap {E_{\epsilon_1}}$, which is the interior 
of $\pi^{-1} (N \cap Y) \cap \bar{E_{\epsilon_1}}$, onto an open neighborhood of $Y \cap N$ and define 
a smooth stratified diffeomorphism compatible with  the families of control data.
The Theorem is thus proved.
\end{proof}
 
 \begin{remark}
Let us examine briefly the stratification structure of 
$$\Phi^{-1}(Y) \cap \{\theta =a\}= \Pi^{-1}(\{a\})= \pi^{-1}(N \cap Y) \cap \overline{E_\epsilon}$$ where we recall that $\chi(a)=0$ and that $\pi$ was defined at the beginning of this Section. 
The stratum are of the form $\Phi^{-1}(S) \cap \{\theta=a\}$ where $S$ is a stratum of $Y$. 
But since $\Phi ( e, a) = \phi (\chi(a) e)=\exp_{\pi(e)} (0\cdot e)$, we immediately see that
 $\Phi^{-1}(S) \cap \{\theta=a\}= \pi^{-1}(N \cap S) \cap \overline{E_\epsilon}$. But, by transversality, the strata of 
 $N\cap Y$ are of the form $N \cap S$.
 So the strata of 
 $\pi^{-1}(N \cap Y) \cap \overline{E_\epsilon}$ are  lifts of the strata of $N \cap Y$. Next, we describe briefly the control data of 
 $\pi^{-1}(N \cap Y) \cap \overline{E_\epsilon}$ using Lemmas \ref{lem:S} and \ref{lem:Diag} and their proofs.
 One has:
 \begin{equation} \label{eq:rc}
 \pi_{Z_1 \cap \Pi^{-1}(\{a\}),Z_2 \cap \Pi^{-1}(\{a\}) } (y_I, y_{II}(y_I, 0) ; \xi, a) =
 ( a(y_I,0) ; \sum_{j=1}^k \xi_k b_j(y_I,0) , a) \,.
\end{equation}
 Now recall from Lemma \ref{lem:S} and the proof of Lemma \ref{lem:Diag} that you can interpret $a(y_I,0)$ as the coordinates 
 of the point $\pi_{S_1} ( y_I, y_{II}(y_I, 0) )= \pi_{S_1 \cap N} ( y_I, y_{II}(y_I, 0) )$. Therefore
 the control map \eqref{eq:rc} is a lift of the control map $\pi_{S_1} $ restricted to $N \cap S_1$. Lastly, using again the proof of 
 Lemma \ref{lem:Diag}, one sees that  the radial function
 is given by
 $$
 \rho_{Z_1 \cap \Pi^{-1}(\{a\}) }(y_I, y_{II}(y_I, 0) ; \xi, a)= \rho_{S_1}( y_I, y_{II}(y_I, 0))\, .
 $$
Summarizing, the bundle $\pi^{-1}(N \cap Y) \cap \overline{E_\epsilon}$  is 
 indeed a Thom-Mather vector  bundle over $N\cap Y$.
\end{remark}
 
\begin{example} \label{exple.unionoftwoplanes}
The permutation group $G=S_k$ on $k$ letters, $k\geq 2,$ acts 
on the complex projective space
\[ M= \pr^{2+k} = \{ (x:y:z:u_1:u_2:\cdots :u_k) \} \]
by permuting the last $k$ homogeneous coordinates $u_1, \ldots, u_k$.
If $k\geq 3,$ then the fixed point set of this action is
\[ N = M^G = \{ (x:y:z:u:u:\cdots:u) \} 
	= V (u_1-u_2, u_1-u_3, \ldots, u_1-u_k), \]
where $V(I)\subset \pr^{2+k}$ denotes the algebraic vanishing set defined by a 
homogeneous ideal $I\subset \cplx [x,y,z,u_1,\ldots,u_k]$. 
Thus $N$ is a $3$-plane in $\pr^{2+k}$.
If $k=2$, then there is the additional fixed point $(0:0:0:1:-1)$.
We assume $k\geq 3$ from now on.
Let $X\subset \pr^{2+k}$ be the singular hypersurface
\[ X = V(xy). \]
Its singular set is given by 
\[ X_k = V(x,y) = \{ (0:0:z:u_1:\cdots :u_k) \}, \]
a $k$-plane in $\pr^{2+k}$.
In fact, the filtration $X \supset X_k$ is a Whitney stratification
of $X$ in $\pr^{2+k}$.
The set $X$ is invariant under the action of $G$, with fixed point set
\[ X^G = X \cap N = V(xy, u_1 - u_2,\ldots, u_1 - u_k). \]
The stratum $X_k$ is also $G$-invariant.
The $3$-plane $N$ is transverse to the Whitney strata $X-X_k$, $X_k$ of $X$.
By Theorem \ref{thm:normal} , $X^G$ is normally non-singular in $X$ and
its normal bundle is the restriction of the normal bundle of the
submanifold $N \subset \pr^{2+k}$.
\end{example}

\begin{example} \label{exple.nodalcubic}
We take the same $G=S_k$-action on $M=\pr^{2+k}$ as in 
Example \ref{exple.unionoftwoplanes}, $k\geq 3$.
Let $X\subset \pr^{2+k}$ be the singular hypersurface
\[ X = V(zy^2 - x^3 - zx^2). \]
(In $\pr^2$, this equation defines the nodal cubic curve
with isolated singular point $(0:0:1)$ given by the ideal
$(x,y)$.)
The singular set of $X$ is given by 
\[ X_k = V(x,y) = \{ (0:0:z:u_1:\cdots :u_k) \}, \]
a $k$-plane in $\pr^{2+k}$, as in the previous example.
This time, a Whitney stratification of $X$ in $\pr^{2+k}$
is given by the depth $2$ filtration
\[ X \supset X_k = V(x,y) \supset X_{k-1} = V(x,y,z).  \]
The set $X$ and all of its strata are invariant under the 
action of $G$, with fixed point set
\[ X^G = X \cap N = V(zy^2 - x^3 - zx^2, u_1 - u_2,\ldots, u_1 - u_k). \]
It is straightforward to verify that $N$ is transverse to the Whitney strata 
$X-X_k,$ $X_k - X_{k-1},$ and $X_{k-1}$ of $X$.
By Theorem \ref{thm:normal} , $X^G$ is normally non-singular in $X$ and
its normal bundle is the restriction of the normal bundle of the
submanifold $N \subset \pr^{2+k}$.
\end{example}

\begin{example} \label{lemniscate}
Consider the real affine algebraic variety $X$ in $M=\mathbb{R}^3$ given by the hypersurface
\[ X = \{ (x,y,z) ~|~ (x^2 + y^2)^2 - 2 (1-z^2) (x^2 - y^2) =0 \}.  \]
The singular set $S$ of $X$ is given by $x=y=0$, i.e. the $z$-axis.
Decomposing this singular set further as
\[ S= S_0 \cup S_1,~
   S_0 := \{ (0,0,1),~ (0,0,-1) \},~ S_1 := \{ (0,0,z) ~|~ z\not= \pm 1 \}, \]
one obtains in fact a Whitney stratification 
$X = S_0 \cup S_1 \cup (X-S)$ of $X \subset \real^3$.
Over $z\not= \pm 1,$ we may regard $X$ as a family $\{ L_z \}$ of
lemniscates $L_z \subset \real^2 \times \{ z \}$ parametrized by $z$.
Over $z=1$ and $z=-1$, the lemniscates degenerate to a point.
A smooth $G=\mathbb{Z}/2$-action on $\mathbb{R}^3$ is generated by the reflection
\[ R(x,y,z) = (x,y,-z).  \]
The variety $X$ is invariant under $R$, and
each Whitney stratum $S_0$, $S_1$ and $X-S$ is invariant under $R$.

The fixed point set $M^G$ in $\mathbb{R}^3$ is $\{ (x,y,0) \}$.
Therefore, the fixed point set $X^G$ is given by
\[ X^G = X \cap M^G
     = \{ (x,y,0) ~|~ (x^2 + y^2)^2 - 2 (x^2 - y^2) =0 \}, \]
which is the lemniscate $L_0$. This is a curve with one singular point,
the origin $(x,y,0)=(0,0,0)$.

Now, $M^G$ is vacuously transverse to the stratum
$S_0$, since their intersection is empty.
The stratum $S_1$ is obviously transverse to the plane $M^G$ at their
intersection point $S_1 \cap M^G = \{ (0,0,0) \}$.
We show that the stratum $X-S$ is transverse to $M^G$:
Let $p$ be a point in the intersection $(X-S) \cap M^G = L_0 - \{ (0,0,0) \}$.
Then $p$ has coordinates $p=(x_0, y_0, 0)$.
If $f$ denotes the defining equation that cuts out $X$, then
the derivative $f_z = 4z (x^2 - y^2)$ vanishes at $p$.
So the normal space $N_p$ to $X-S$ at $p$, which is spanned by the gradient of $f$, 
is contained in $T_p (\real^2 \times 0)$. But since
$N_p \oplus T_p (X-S) = T_p \real^3$, it follows that $T_p (X-S)$ cannot also
be contained in $T_p (\real^2 \times 0) = T_p (M^G)$.
Therefore, $T_p (X-S) + T_p (M^G) = T_p \real^3$, which establishes
the transversality of $X-S$ and $M^G$ in $\real^3$.
Thus by Theorem \ref{thm:normal} the inclusion $X^G \subset X$ 
is strongly normally non-singular.
\end{example}

\section{The equivariant K-homology class defined by $D^{{\rm sign}}$ and its properties}\label{sect:signature-class}

In this Section we shall define the equivariant K-homology class of the signature operator on an oriented Thom-Mather  G-pseudomanifold
satisfying the Witt condition and study its main properties. As the proofs are easy extensions of the ones 
in the non-equivariant case, we shall be brief.

\subsection{The equivariant K-homology class $[D^{{\rm sign}}_{{\bf g}}]\in K^G_{j} (X)$}
Let $X$ be an oriented  Thom-Mather $G$-space, endowed with a $G$-invariant wedge metric ${\bf g}$. 
We shall assume that G acts preserving the orientation. From now on the wording {\em oriented  Thom-Mather $G$-space}
will be deemed to imply that the action preserves the orientation.
We consider the signature operator 
$D^{{\rm sign}}_{\bf g}$ associated to ${\bf g}$, acting on wedge differential forms. We shall be brief in this section
and refer for example to \cite{ALMP:Witt}, \cite{ALMP:Hodge}, 
\cite{agr-jdg} for relevant background. 

The operator $D^{{\rm sign}}_{\bf g}$
is an example of wedge operator of Dirac-type. See \cite{agr-jdg}. One way to analyze the properties of such an operator
is by considering it on the resolution $M$ of $X$, a manifold with fibered corners.
Let us fix a  stratum $a$ of $X$ and the associated boundary hypersurface $\partial_a  M$ inside the resolution $M$.
$\partial_a M$ is a fibration with base $B_a M$ and typical fibers $F_a M$. Let $x$ be a boundary defining
function for $\partial_a  M$. Consider a wedge operator of Dirac type $\eth^E$. It is not difficult to see that $x\eth^E|_{x=0}$  is a vertical family of differential operators on the fibration $F_a M - \partial_a M\to B_a M$. 
A wedge operator of Dirac type, $\eth^E$,  acting on the sections of a bundle of wedge  Clifford module $E$, satisfies the {\bf analytic 
Witt condition} if for every stratum $a$ of $X$ and therefore for every boundary hypersurface $\partial_a M$ of $M$ with boundary defining
function $x$, the boundary family $x\eth^E|_{x=0}$ is $L^2$-invertible.
Such an operator admits a natural self-adjoint  domain $\mathcal{D}_{VAPS} (\eth^E)$,
$$\mathcal{D}_{VAPS} (\eth^E)=\rho^{1/2} H^1_e (M,E)\cap \mathcal{D}_{{\rm max}} (\eth^E)$$
with $\rho$ a total boundary defining function and $H^1_e (M,E)$ the edge Sobolev space of order 1. $VAPS$ stands for
Vertical Atiyah Patodi Singer. It is proved in 
\cite{agr-jdg} that $\eth^E$ with this domain is a self-adjoint Fredholm operator with compact resolvant.

Let now $X$ be an oriented {\bf Witt pseudomanifold} endowed with a wedge metric ${\bf g}$; thanks to  \cite[Corollary 4.2]{ALMP:Hodge}, one 
then sees that {\bf  the signature operator  
$D^{{\rm sign}}_{{\bf g}}$ satisfies the analytic Witt condition}. 
The operator  $D^{{\rm sign}}_{{\bf g}}$ admits therefore a 
self-adjoint Fredholm domain $\mathcal{D}_{VAPS} (D^{{\rm sign}}_{{\bf g}})$. This is the domain that we shall consider.
If we now assume that the metric ${\bf g}$ is $G$-invariant, then we see easily that the operator $(D^{{\rm sign}}_{{\bf g}}, \mathcal{D}_{VAPS} (D^{{\rm sign}}_{{\bf g}}))$ commutes with the action of $G$. The kernel of this operator, which is finite dimensional,
thus defines a finite dimensional $G$-representation. If we are in the even dimensional case, then
we obtain the equivariant index ${\rm ind}_G (D^{{\rm sign}}_{{\bf g}}) := [{\rm Ker} D^{{\rm sign},+}_{{\bf g}}]-[{\rm Ker} D^{{\rm sign},-}_{{\bf g}}]\in R(G)$.
More generally, proceeding as in \cite{ALMP:Witt}, see also \cite{ABP25}, 
but taking into account the $G$-equivariance, we obtain
the following fundamental result:

\begin{theorem} 
Let $G$ be a compact Lie group and let $X$ be an oriented  Thom-Mather G-pseudomanifold satisfying the Witt condition; the action of $G$ preserves the orientation.
 We fix a
$G$-invariant wedge metric ${\bf g}$. Then the 
signature operator $D^{{\rm sign}}_{{\bf g}}$ with domain $\mathcal{D}_{VAPS} (D^{{\rm sign}}_{{\bf g}})$ defines an equivariant $K$-homology class
\begin{equation}
[D^{{\rm sign}}_{{\bf g}}]\in K^G_{j} (X)\,,\quad\text{with}\quad j=\dim X \;\;({\rm mod} \;\;2) 
\end{equation}
Given two $G$-invariant wedge metrics ${\bf g}_0$ and ${\bf g}_1$, there exists  a path of $G$-invariant wedge metrics
${\bf g}_t$ joining them and 
$$[D^{{\rm sign}}_{{\bf g}_0}]=[D^{{\rm sign}}_{{\bf g}_1}]\quad\text{in}\quad K^G_{j} (X)\,.$$
We often denote the signature operator by $D$
and this equivariant class by $[D]$, unless confusion should arise.
\end{theorem}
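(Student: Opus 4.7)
The plan is to realize $[D^{\rm sign}_{\bf g}]$ as an equivariant unbounded Kasparov $(C(X),\cplx)$-module, invoke the Baaj-Julg identification $KK^G_j(C(X),\cplx)\cong K^G_j(X)$, and then prove metric independence by an operator homotopy. First I would take the Hilbert space $H=L^2(M;E)$ of wedge $L^2$-sections of the wedge Clifford bundle $E$ on the resolution $M$, equipped with its natural $C(X)$-action by multiplication and its geometric $G$-action; the latter is a unitary representation precisely because the wedge metric ${\bf g}$ is $G$-invariant (existence granted by Proposition \ref{prop:metric}). Combining the analytic Witt condition for $D^{\rm sign}_{\bf g}$, already recalled in the excerpt via \cite[Cor.\ 4.2]{ALMP:Hodge}, with the main self-adjoint Fredholm theorem of \cite{agr-jdg}, the operator $(D^{\rm sign}_{\bf g},\mathcal{D}_{VAPS})$ is self-adjoint with compact resolvent; $G$-invariance of the metric forces $G$-equivariance of the operator, its domain and its functional calculus.

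Second, I would verify the remaining Kasparov axioms for the bounded transform $F=D^{\rm sign}_{\bf g}(1+(D^{\rm sign}_{\bf g})^2)^{-1/2}$. Compactness of the resolvent gives $F^2-\operatorname{id}=-(1+(D^{\rm sign}_{\bf g})^2)^{-1}\in\mathcal{K}(H)$, while compactness of the commutators $[F,\pi(f)]$ for $f$ in the dense subalgebra of Lipschitz functions on $X$ follows in the standard fashion from the first-order nature of $D^{\rm sign}_{\bf g}$ together with the integral representation $F=\tfrac{2}{\pi}\int_{0}^{\infty}D^{\rm sign}_{\bf g}(1+(D^{\rm sign}_{\bf g})^2+\lambda^2)^{-1}\,d\lambda$. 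Both points are carried out in the non-equivariant setting in \cite{ALMP:Witt,ABP25}, and $G$-invariance is preserved at every step because all constructions are built from functional calculus applied to a $G$-equivariant operator. This assembles into a class in $KK^G_j(C(X),\cplx)\cong K^G_j(X)$ with $j\equiv\dim X\pmod{2}$.

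Third, for two $G$-invariant wedge metrics ${\bf g}_0,{\bf g}_1$, I would use that the cone of $G$-invariant wedge metrics is convex: the linear interpolation ${\bf g}_t=(1-t){\bf g}_0+t{\bf g}_1$ remains a wedge metric (this being a fiberwise positivity statement on the wedge tangent bundle over $M$) and is manifestly $G$-invariant. After trivializing the family of $L^2$-spaces by the canonical half-density unitaries along the path, the family of bounded transforms $F_t$ becomes a norm-continuous family of $G$-equivariant Fredholm operators with uniformly compact commutators; it yields a class in $KK^G_j(C([0,1])\otimes C(X),\cplx)$ whose evaluations at $t=0$ and $t=1$ are $[D^{\rm sign}_{{\bf g}_0}]$ and $[D^{\rm sign}_{{\bf g}_1}]$, and homotopy invariance of $KK^G$ then gives the claimed equality.

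The main technical obstacle is the operator-norm continuity of the family $\{F_t\}$ across the deformation of wedge metrics: the domains $\mathcal{D}_{VAPS}(D^{\rm sign}_{{\bf g}_t})$ are defined using the edge Sobolev space $\rho^{1/2}H^1_e$, which does depend on $t$, so either a metric-independent characterization of the domain through the minimal domain and the model boundary families at each stratum must be used, or one must pass to strong resolvent continuity combined with uniform control of the spectral gap near zero. Uniform validity of the analytic Witt condition along the path is however automatic for the signature operator on a Witt pseudomanifold, since it is enforced by the topological Witt condition on links, which is independent of ${\bf g}_t$; this gives a uniform Fredholm bound and closes the homotopy argument.
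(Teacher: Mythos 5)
Your proposal follows the same route the paper takes by reference to \cite{ALMP:Witt} and \cite{ABP25}: build an unbounded $G$-equivariant Kasparov module from $(D^{{\rm sign}}_{{\bf g}},\mathcal{D}_{VAPS})$, pass to the bounded transform, and prove metric independence via an operator homotopy along the convex interpolation of $G$-invariant wedge metrics. You correctly flag the domain dependence of $\mathcal{D}_{VAPS}$ on $t$ as the main technical point, and both remedies you list are standard in this setting.
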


\noindent
If $X$ is even dimensional and $\pi:X\to {\rm point}$ is the map to a point, then $\pi_* [D]\in K_0^G({\rm point})= R(G)$
is the equivariant index of $D$:
$$\pi_* [D]={\rm ind}_G (D) \quad\text{in}\quad R(G). $$
See Proposition \ref{prop:K-hom} below for a proof.
It is important to point out for later use that $K_0^G(X)$ is an $R(G)$-module.\\

\subsection{G-Stratified Diffeomorphism invariance.}

The following result, with proof similar to the one given in 
\cite[Proposition 4.1]{ABP25}, will be important in our treatment
of the G-signature formula on G-Witt spaces.

\begin{proposition}\label{diffeo-invariance}
Let $X$ and $Y$ be two oriented Thom-Mather  G-pseudomanifolds satisfying the Witt conditionand let $\phi:X\to Y$ be an equivariant stratified diffeomorphism
preserving the orientations.\footnote{This means, in particular, that $\phi$ respects the equivariant control data
on $X$ and $Y$.}
  Let $[D_X]\in K_*^G (X)$ and $[D_Y]\in K_*^G (Y)$ be the associated equivariant signature classes.
Then 
$$\phi_* [D_X]= [D_Y]\quad\text{in}\quad K_*^G (Y).$$
\end{proposition}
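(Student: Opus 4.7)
The plan is to reduce equivariant stratified diffeomorphism invariance to the metric independence statement already contained in the preceding theorem, together with the naturality of the Kasparov cycle representing the signature operator. The key observation is that a stratified diffeomorphism pulls back wedge metrics to wedge metrics (this was recalled in Proposition \ref{prop:metric} via \cite[Lemme 5.5]{brasselet-et-al} and the lift of $\phi$ to a $b$-map of the resolutions preserving the iterated fibration structure), and that pullback of wedge differential forms implements a unitary intertwiner of the signature complexes which respects the VAPS self-adjoint extension.

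First I would fix a $G$-invariant wedge metric $\mathbf{g}_Y$ on $Y$ and consider $\mathbf{g}_X := \phi^* \mathbf{g}_Y$, which by the preceding is again a $G$-invariant wedge metric on $X$ (using that $\phi$ is $G$-equivariant and orientation preserving, and respects the equivariant control data). Pullback of forms gives a unitary isomorphism
\[ \Phi : L^2 \bigl(\Lambda^\bullet_w (Y), \mathbf{g}_Y\bigr) \xrightarrow{\;\cong\;} L^2 \bigl(\Lambda^\bullet_w (X), \mathbf{g}_X\bigr), \]
which intertwines the two signature operators, i.e. $\Phi \, D^{\mathrm{sign}}_{\mathbf{g}_Y} = D^{\mathrm{sign}}_{\mathbf{g}_X} \, \Phi$ on smooth compactly supported forms in the regular part, and which intertwines the actions of $C(Y)$ and $C(X)$ via the $*$-isomorphism $\phi^* : C(Y) \to C(X)$. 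Because $\phi$ is $G$-equivariant, $\Phi$ is $G$-equivariant, and because $\phi$ preserves control data, $\Phi$ maps the VAPS domain $\mathcal{D}_{VAPS}(D^{\mathrm{sign}}_{\mathbf{g}_Y})$ onto $\mathcal{D}_{VAPS}(D^{\mathrm{sign}}_{\mathbf{g}_X})$ (the edge Sobolev spaces and the analytic Witt condition at each boundary hypersurface transport under the lift of $\phi$ to the resolutions).

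Consequently, $(\Phi, D^{\mathrm{sign}}_{\mathbf{g}_X}, \phi^*)$ exhibits an isomorphism of $G$-equivariant unbounded Kasparov modules between the cycle representing $[D^{\mathrm{sign}}_{\mathbf{g}_Y}]\in K^G_*(Y)$ and the cycle representing $[D^{\mathrm{sign}}_{\mathbf{g}_X}]\in K^G_*(X)$ pulled forward along $\phi$. By the standard definition of the functorial pushforward in equivariant K-homology (via the $*$-homomorphism $\phi^* : C(Y) \to C(X)$ and Kasparov's descent), this is precisely the statement that
\[ \phi_* [D^{\mathrm{sign}}_{\mathbf{g}_X}] = [D^{\mathrm{sign}}_{\mathbf{g}_Y}] \quad \text{in}\quad K^G_*(Y). \]
Finally, invoking the metric independence asserted in the preceding theorem, $[D^{\mathrm{sign}}_{\mathbf{g}_X}] = [D_X]$ in $K^G_*(X)$ irrespective of which $G$-invariant wedge metric is used, so $\phi_* [D_X] = [D_Y]$ as claimed.

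The main technical obstacle, which makes the argument nontrivial compared with the smooth case, is verifying that pullback by a stratified diffeomorphism really does preserve the VAPS domain; this requires that $\phi$ lifts to a $b$-map of the resolutions preserving the boundary fibration structure, so that it preserves edge Sobolev regularity and the boundary families $x \eth|_{x=0}$ at each stratum (whose invertibility is the analytic Witt condition). Once this is in place, the $G$-equivariance costs nothing extra because $\phi$ itself is $G$-equivariant and all constructions are natural. The rest of the argument is essentially a repetition of the non-equivariant proof in \cite[Proposition 4.1]{ABP25} with $G$-equivariant Kasparov theory in place of the non-equivariant one.
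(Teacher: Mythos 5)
Your proof is correct and follows essentially the same route as the paper: the paper itself only says the proof is ``similar to the one given in \cite[Proposition 4.1]{ABP25}'' (the non-equivariant case), and your argument is exactly that non-equivariant argument --- pull back the wedge metric by $\phi$, use pullback of forms as a unitary intertwiner of the Kasparov cycles respecting the VAPS domains, and then invoke metric independence --- upgraded to the $G$-equivariant setting using $G$-equivariance of $\phi$. Your identification of the preservation of the VAPS domain (via the lift of $\phi$ to a $b$-map of the resolutions preserving the boundary iterated fibration structure) as the main technical point is also the right emphasis.
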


\subsection{Gysin homomorphisms in the $G$-equivariant setting}\label{sect:gysin}

We extend to the $G$-equivariant case results of Hilsum \cite{Hil:PDBDKP}
and Albin, Banagl and Piazza \cite{ABP25}. 
We only state what we really need, although it should not be difficult to extend the general results in \cite{ABP25} to the equivariant case.

\begin{theorem}\label{gysin}
Let $E\xrightarrow{p} X$ be a $G$-equivariant vector bundle over a $G$-Witt pseudomanifold $X$.
Then there exists a well-defined element $\Sigma (p)\in KK_*^G (E,X)$, $*={\rm rk} E$, represented 
by the vertical family of signature operators along the fibers, such that
\begin{equation}\label{transfer-projection}
[D_E] = \Sigma (p)\otimes [D_X]\,.
\end{equation}
We can thus define a Gysin homomorphism 
$$p^!: K^G_* (X)\to K^G_{*+ {\rm rk} E} (E)\,,\quad \alpha\to \Sigma(p)\otimes \alpha$$
and we deduce from \eqref{transfer-projection}
that
$$p^! [D_X]=[D_E]\,.$$
\end{theorem}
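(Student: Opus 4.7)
The plan is to construct $\Sigma(p)$ as the $G$-equivariant unbounded Kasparov module associated to the vertical family of signature operators along the fibers of $p \colon E \to X$, and then verify the product decomposition $[D_E] = \Sigma(p) \otimes_{C(X)} [D_X]$ via a Kucerovsky-type criterion, promoting the entire argument of \cite{Hil:PDBDKP} and \cite{ABP25} to the $G$-equivariant setting.

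First, I would equip $X$ with a $G$-invariant wedge metric $g_X$ as provided by Proposition \ref{prop:metric}, choose a $G$-invariant fiber metric $g_{\mathrm{fib}}$ on $E$, and fix a $G$-invariant horizontal distribution, which exists by $G$-averaging. Together these produce a $G$-invariant submersion wedge metric $g_E = p^*g_X + g_{\mathrm{fib}}$ on the total space, itself a Witt $G$-pseudomanifold since the Euclidean fibers preserve the local Witt condition. The fiberwise signature operator $D^{\mathrm{fib}}$ is then a $G$-equivariant vertical Dirac-type operator. Following the blueprint of \cite{ABP25}, one packages $D^{\mathrm{fib}}$ into a $C(X)$-linear self-adjoint regular operator on the Hilbert $C(X)$-module of $L^2$ vertical wedge forms, whose resolvent is $C(X)$-compact; equivariance of all the data is automatic. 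This yields the class $\Sigma(p) \in KK^G_{\mathrm{rk}\, E}(C(E), C(X))$, the parity shift coming from Bott periodicity applied to the Euclidean fibers.

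The heart of the argument is the product formula $[D_E] = \Sigma(p) \otimes [D_X]$. The plan is to realize $D_E$, with respect to the submersion wedge metric above, as an unbounded representative of the Kasparov product of $\Sigma(p)$ and $[D_X]$, then apply Kucerovsky's criterion. With respect to the horizontal/vertical splitting, $D_E$ decomposes as
\[
D_E = D^{\mathrm{fib}} \otimes 1 + 1 \otimes_\nabla D^{\mathrm{hor}} + R,
\]
where $D^{\mathrm{hor}}$ is the horizontal lift of $D_X$ and $R$ is a bounded curvature correction coming from the Levi-Civita connection of $g_E$. The connection, positivity and domain hypotheses of Kucerovsky are verified exactly as in the non-equivariant case of \cite{ABP25}; in the $G$-equivariant category all Hilbert modules and operators carry compatible $G$-actions, but the analytic estimates are local on the wedge structure of $E$ and thus unaffected by equivariance.

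Once $\Sigma(p)$ is built, the Gysin homomorphism is defined by $p^!(\alpha) := \Sigma(p) \otimes_{C(X)} \alpha$, landing in $K^G_{*+\mathrm{rk}\, E}(E)$ as required, and the identity $p^![D_X] = [D_E]$ becomes a mere restatement of the product formula. The main technical obstacle I anticipate is the verification of Kucerovsky's positivity and domain conditions over the singular base $X$; one expects these to reduce, via the equivariant wedge symbol calculus of \cite{agr-jdg} applied fiberwise, to estimates already established in the non-equivariant case of \cite{ABP25}, carried out inside the $G$-equivariant category without further analytic input.
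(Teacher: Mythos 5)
Your proposal is correct and follows essentially the same route as the paper: the authors prove Theorem \ref{gysin} precisely by invoking the non-equivariant construction of $\Sigma(p)$ and the product verification from \cite{ABP25}, and then noting that the needed Kucerovsky-type criterion holds $G$-equivariantly (citing \cite{Fo}, \cite{Fo-Re}) so the argument carries over verbatim. The only small omission in your write-up is that the paper makes explicit reference to Forsyth and Forsyth--Rennie for the $G$-equivariant version of Kucerovsky's theorem, whereas you assert the equivariant upgrade of the analytic estimates without a citation; otherwise the constructions and the role of the Kucerovsky criterion match the paper's intent.
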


\begin{proof}
The proof given in \cite{ABP25} is based crucially on Kucherovsky's conditions ensuring that the Kasparov 
product of two unbounded cycles is equal to the class of a given cycle.
As explained in \cite{Fo}  \cite{Fo-Re},  Kucherovsky's theorem holds in the $G$-equivariant setting, with $G$ compact.
 Making use of this refined result, we can repeat the proof in \cite{ABP25} in the $G$-equivariant case.
 \end{proof}

\section{The equivariant signature}\label{sect:equivariant-signature}

Let $G$ be a compact Lie group and
let $X$ be a compact oriented Thom-Mather G-pseudomanifold of dimension $n=2m$ satisfying the Witt condition.
Recall that by definition this means that $G$ acts preserving the orientation.

To define a $G$-signature, we proceed as in Atiyah-Singer III \cite{ASIII}, except that
the singularities in $X$ require us to work with intersection homology instead of
ordinary homology.
Let $IH^* (X)$ denote the intersection cohomology of $X$, with real coefficients, taken
with respect to the lower or upper middle perversity. The Witt condition ensures
that the canonical map from lower to upper middle perversity intersection 
cohomology is an isomorphism, and that the bilinear intersection form
\[ B: IH^m (X) \times IH^m (X) \longrightarrow \real \]
is nondegenerate. This form is symmetric when $m$ is even and
skew-symmetric when $m$ is odd.
For every $g\in G$,
the map $g: X\to X$ is a stratified diffeomorphism and thus
induces an automorphism $g_*: IH^m (X)\to IH^m (X)$.
This makes $IH^m (X)$ a $G$-module.
The form $B$ is $G$-invariant, since the action of $G$ on $X$ preserves the orientation.
Let $\langle \cdot, \cdot \rangle$ be a positive definite and $G$-invariant 
inner product on $IH^m (X)$.
An operator $A$ is defined by the equation 
$B(x,y) = \langle x, Ay  \rangle$.
This operator commutes with the action of $G$, as for all $g\in G$,
$
\langle x, g^{-1} A(gy) \rangle
= \langle g^{-1} gx, g^{-1} A(gy) \rangle 
 = \langle gx, A(gy) \rangle 
= B(gx, gy)
= B(x, y) = \langle x, Ay \rangle.
$
The adjoint is given by $A^* = (-1)^m A$.

Suppose that $m$ is even.
Then $A$ is self-adjoint and $IH^m (X)$ decomposes as a direct sum
$IH^m (X) = IH^+ \oplus IH^-,$ where $IH^+$ and $IH^-$ are the
positive and negative eigenspaces of $A$. These are $G$-invariant and
thus define real $G$-modules that we will also denote by
$IH^+, IH^-$. Up to isomorphism, these are independent of the choice
of inner product $\langle \cdot, \cdot \rangle$, as $G$ is compact and thus
has discrete characters, while the space of $G$-invariant inner products is
connected.
\begin{definition}
For $m$ even, the \emph{$G$-signature} of the compact oriented
$G$-Witt pseudomanifold  $X$ is the virtual representation
\[ {\rm Sign} (G,X) := IH^+ - IH^- \in RO(G) \subset R(G).  \]
\end{definition}
On elements $g\in G,$ we will in particular consider the real numbers
\[  {\rm Sign} (g,X) := \operatorname{tr}(g_*|_{IH^+}) - \operatorname{tr}(g_*|_{IH^-}). \]
 This number depends only on the action of $g$ on $IH^* (X)$.
 Since $G$ acts by stratified diffeomorphisms on $X$ and $IH^m(X)$ is invariant by stratified homotopy,  ${\rm Sign} (g,X)$ depends only 
 on the connected component of $g$ in $G$. 
 If $m$ is odd, then $A$ is skew-adjoint.
 Let $(AA^*)^{1/2}$ denote the positive square root of $AA^* = -A^2$.
Since the square of the operator $J = A/ (AA^*)^{1/2}$ 
is $J^2 = -1$, $J$ defines a complex structure on $IH^m (X)$.
As $J$ commutes with the $G$-action, we obtain a complex $G$-module $IH^m (X)$.
Again, this module is independent of the choice of inner product.

\begin{definition}
For $m$ odd, the \emph{$G$-signature} of the 
$G$-Witt space $X$ is the virtual representation
\[ {\rm Sign} (G,X) := IH^m (X) - IH^m (X)^* \in R(G)  \]
with  $IH^m (X)^*$ denoting the dual representation.
\end{definition}
On elements $g\in G,$ we will in particular consider the complex numbers
\[  {\rm Sign} (g,X) := 2i~ \operatorname{Im} \operatorname{tr}(g_*|_{IH^m (X)}), \]
where one takes the trace of $g_*$ as an automorphism of a 
complex vector space.
This number is again independent of choices.

Now, following  \cite[p. 579]{ASIII}, we briefly connect this construction to the space of 
real $L^2$ harmonic forms on $X$ and 
the $G-$equivariant signature operator $D_X^{{\rm sign}}$ associated to the chosen wedge metric. Recall that by Cheeger \cite{Cheeger}, $0$ is isolated in  the $L^2$-spectrum of $D_X^{{\rm sign}}$ and 
$IH^*(X)\otimes_\R \C$ is identified with ${\rm Ker} \, D_X^{{\rm sign}}$.
First assume $m$ even. As in \cite[p. 579]{ASIII}, one gets:
\begin{equation}
\begin{aligned}
IH^+\otimes_\R \C\,=\, ({\rm Ker}\, D_X^{{\rm sign} , +}) \cap \Omega^m  \\
IH^-\otimes_\R \C\,=\, ({\rm Ker}\, D_X^{{\rm sign} , -} ) \cap \Omega^m\,,
\end{aligned}
\end{equation}
where $\Omega^m$ denotes the vector space of $L^2-$forms on the regular part of $X$. Moreover, proceeding still as in 
\cite[p. 579]{ASIII}, one checks easily that for any integer $0\leq q < m$, 
$$
 ({\rm Ker}\, D_X^{{\rm sign} }) \cap \Omega^q \oplus ({\rm Ker}\, D_X^{{\rm sign} }) \cap \Omega^{m-q}
$$ does not contribute  to the equivariant index 
$$
{\rm ind}_G (g, D_X^{sign , +}) = 
{\rm tr}(g_*|_{{\rm Ker} (D^{{\rm sign},+})}) - {\rm tr}(g_*|_{{\rm Ker} (D^{{\rm sign},-})})
$$ Therefore, we conclude, proceeding as in \cite{ASIII}, that
\begin{equation} \label{eq:L}
{\rm Sign} ( g, X)= 
{\rm ind}_G (g, D_X^{sign , +}) \,.
\end{equation}
 As in \cite{ASIII}, one checks that the  result \eqref{eq:L} holds true as well when $m$ is odd.
 
 \begin{proposition} \label{prop:K-hom} Consider the projection $\pi: X \rightarrow \{ {\rm point}\}$ and 
   the $G-$equivariant $K-$homology class $[D_X^{{\rm sign}}] \in K_*^G(X)$ so that 
   $\pi_* ([D_X^{{\rm sign}}]) \in K_*^G({\rm point})=R(G)$.
 Then:
 $$
 \forall g \in G\,,\;\;\;\;{\rm Sign} ( g, X)= \pi_* ([D_X^{{\rm sign}}]) (g)\,.
 $$
 \end{proposition}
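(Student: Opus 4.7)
The plan is to reduce the assertion to equation (\ref{eq:L}), which was just established, by identifying $\pi_*([D_X^{{\rm sign}}])$ with the equivariant index of $D_X^{{\rm sign}}$ in $R(G)$, and then computing the character of the resulting virtual representation at $g$.

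Since $\dim X = 2m$ is even, the class $[D_X^{{\rm sign}}]$ lies in $K_0^G(X)$, and $\pi_*$ sends it into $K_0^G({\rm point}) = R(G)$. The first step is to verify that
$$\pi_*([D_X^{{\rm sign}}]) \,=\, [{\rm Ker}(D_X^{{\rm sign},+})] - [{\rm Ker}(D_X^{{\rm sign},-})] \,\in\, R(G),$$
i.e.\ that pushforward-to-a-point in equivariant $K$-homology of the class built from an equivariant, $\mathbb{Z}/2$-graded, self-adjoint Fredholm operator computes its equivariant index. This is the defining functoriality property of the Kasparov construction applied to the unbounded $G$-equivariant Kasparov cycle $(D_X^{{\rm sign}}, \mathcal{D}_{VAPS}(D_X^{{\rm sign}}))$, whose self-adjoint Fredholmness on a Witt space with $G$-invariant wedge metric was recalled in Section \ref{sect:signature-class}: the analytic Witt condition, Cheeger's Hodge theorem, and the fact that $0$ is isolated in the $L^2$-spectrum together guarantee that the kernels are genuine finite-dimensional $G$-representations whose formal difference realizes $\pi_*([D_X^{{\rm sign}}])$.

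The second step is evaluation at $g$. By definition, evaluating a virtual representation $V-W \in R(G)$ at $g$ yields the difference of characters $\operatorname{tr}(g|_V) - \operatorname{tr}(g|_W)$. Applying this to the identity above gives
$$\pi_*([D_X^{{\rm sign}}])(g) \,=\, \operatorname{tr}(g_*|_{{\rm Ker}(D_X^{{\rm sign},+})}) - \operatorname{tr}(g_*|_{{\rm Ker}(D_X^{{\rm sign},-})}) \,=\, {\rm ind}_G(g, D_X^{{\rm sign},+}).$$
Equation (\ref{eq:L}), valid for both parities of $m$, then identifies this equivariant index at $g$ with ${\rm Sign}(g,X)$, completing the proof.

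There is no real obstacle: the substantive analytic content—functional analysis of $D_X^{{\rm sign}}$ in the $VAPS$ domain, and Cheeger's Hodge-theoretic identification of ${\rm Ker}(D_X^{{\rm sign}})$ with intersection cohomology that yielded (\ref{eq:L})—has already been carried out. What remains is a bookkeeping verification that the Kasparov class built in Section \ref{sect:signature-class} is compatible with pushforward in the canonical way, which is formal once the unbounded cycle is in place.
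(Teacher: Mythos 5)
Your proposal is correct and follows essentially the same route as the paper: reduce to the identity $\pi_*([D_X^{\rm sign}]) = {\rm ind}_G(D_X^{{\rm sign},+})$ in $R(G)$ and then apply equation (\ref{eq:L}). The only difference is one of detail: where you assert the identity $\pi_*[D] = [\ker D^+] - [\ker D^-]$ as formal Kasparov bookkeeping, the paper carries out the short verification explicitly, using the spectral gap at $0$ to split $L^2(X,\Omega^*)$ as $\ker D_X^{\rm sign} \oplus (\ker D_X^{\rm sign})^\perp$ and observing that the second summand gives a degenerate Kasparov module.
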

 \begin{proof} 
 As an element of  the group $KK_0^G( \C , \C) \simeq R(G)$, $\pi_* ([D_X^{{\rm sign}}])$ is defined by the $\mathbb{Z}_2-$graded Kasparov module 
 $( L^2(X , \Omega^*), \lambda, D_X^{{\rm sign}} )$ where $\lambda$ denotes the complex scalar multiplication. 
  Recall that the $L^2-$spectrum of $D_X^{{\rm sign}}$ has a gap 
 at zero so that:
 $$
 L^2(X , \Omega^*) = \ker D_X^{{\rm sign}} \oplus^{\perp} (\ker D_X^{{\rm sign}})^{\perp} \,,
 $$ where $D_X^{{\rm sign}}$ is invertible on $(\ker \, D_X^{{\rm sign}})^{\perp}$. 
 Therefore, $\pi_* ([D_X^{{\rm sign}}])$ is the sum of $ (\ker \, D_X^{{\rm sign}}, \lambda, 0)$ and of 
 the degenerate Kasparov module $((\ker D_X^{{\rm sign}})^{\perp}, \lambda,D_X^{{\rm sign}})$. Thus:
 $$
 \pi_* ([D_X^{{\rm sign}}])\,=\, ( \ker D_X^{{\rm sign}}, \lambda, 0).
 $$
 and consequently
 \begin{equation}\label{equality-sign-ind}
 \pi_* ([D_X^{{\rm sign}}])\,=\,{\rm ind}_G (D_X^{{\rm sign},+})\quad\text{in}\quad R(G)
 \end{equation}
 At this stage, the Proposition is an immediate consequence of \eqref{eq:L}.
 
 \end{proof}
Summarizing, we have proved that
\begin{equation}\label{equality-sign-ind-bis}
{\rm Sign} (g, X)={\rm ind}_G (D_X^{{\rm sign},+})(g)= \pi_* ([D_X^{{\rm sign}}])(g). \end{equation}

\section{The G-signature formula on  $G$-Witt pseudomanifolds}\label{sect:localization}
Let $X$ be an oriented Witt $G$-pseudomanifold.
In this Section we finally prove a formula for ${\rm Sign}(g,X)$, $g\in G$,
under a strong normal non-singular inclusion hypothesis on the fixed point set $X^g$.
Recall that for $X$ an oriented smooth  manifold without boundary,  Atiyah and Singer \cite{ASIII} used two ingredients in order to give a geometric formula for ${\rm Sign} (g,X)$: 
\begin{enumerate}
\item the Atiyah-Singer $G$-index theorem, giving the equality of the topological and the analytic $G$-indices, as homomorphisms from $K_G (TX)$ to $R(G)$;
\item Segal's localization theorem in K-theory \cite{Segal}, \cite{ASII},
a crucial tool for 
 the computation of the topological $G$-index in terms of fixed point set data.
 \end{enumerate}
 In the singular Witt case, we do not have (1). Building  on an alternative treatment
of the result of Atiyah-Segal-Singer, due to Jonathan Rosenberg \cite{G-Signature}, we shall 
 instead work exclusively at the analytic level. More precisely, we shall employ 
$K-$homology classes and KK-classes; moreover, we shall use the Chern character in $K-$homology and in bivariant $KK-$theory, as defined by Puschnigg \cite{Puschnigg}, in order to connect  these K-theory groups to (co)homology. 
We shall also use Segal's  localization theorem but in the context
of equivariant K-homology and more generally 
equivariant bivariant KK-theory, $KK^G_* (X,Y)$.

\subsection{Localization}
We first  recall briefly the definition of the localization of an $R$-module $V$ with respect
to a prime ideal  $\mathfrak{p}$ 
in commutative ring $R$. 
Set $S= R \setminus \mathfrak{p}$, this subset of $R$ is multiplicatively closed.
One defines an equivalence relation $\sim$ on 
the set $ S \times V$ by saying that 
\begin{equation} \label{eq:loc}
( s , v) \sim (s', v'), \text{if} \; \exists \, t \in S\, ,\, 
 t s' v = t s v'\,.
 \end{equation}
 Then  $V_\mathfrak{p}$ is defined as the 
set of equivalence classes $ (S \times V) /\sim $, it is naturally endowed with a structure 
of an $R_{\mathfrak{p}}$-module
and called the localized module of $V$ at $\mathfrak{p}$.

Now, let $G$ be a compact 
Lie group, $g \in G$ and let $H$ be the (topologically) cyclic subgroup generated by $g$.
 Let $\mathfrak{p}$ be the prime ideal of $R(G)$ consisting of virtual representations whose character vanishes at $g$:
 $$\mathfrak{p}=\{[V]-[W]\in R(G)\;|\; {\rm Tr} (g|_V)-{\rm Tr} (g|_W) =0\}\,.$$
 Then the support of $\mathfrak{p}$ coincides with 
 $H$ (\cite{Segal}) and $H$ is finite iff $\mathfrak{p}$  is a maximal ideal.

 Let $X$  be a compact $G-$Witt pseudomanifold equipped with a $G-$invariant orientation and a $G-$invariant wedge metric, $\dim X$ being even. Assume that the connected components $F$ of $X^H$ are  orientable and equivariantly normally non-singularly included in $X$.  By Proposition \ref{thm:F} we have that $F$ is a Witt pseudomanifold.
Moreover, by Proposition \ref{cor:even}, we deduce that $F$ is even dimensional, so that the fibers of the normal bundle $E_F$ of $F$ in $X$ are also of even dimension (= $\dim X \, -\,  \dim F$). We make the additional assumption  that the normal bundle $E_F$ is a $G-$equivariant {\it complex} vector 
bundle; this means, in particular, that
the Thom isomorphism in complex K-theory is available. 

\begin{example}
If the smooth manifold $M$ of Theorem \ref{thm:normal} is complex and $N \subset M^G$
is a complex submanifold, then the normal bundle of $N$ in $M$ is a complex
vector bundle, and thus its restriction to $Y \cap N$, which is the normal
bundle of $Y \cap N$ in $Y$, is a complex vector bundle.
\end{example}

Given that our ultimate goal in this section is to compute ${\rm Sign}\,( g ,X)$ {\bf we can and shall assume} in the sequel
that $G=H=\langle g
\rangle$ so that $G$ is compact topologically cyclic (and thus abelian).  As explained in \cite[p.539]{ASII} and also at the end of this Subsection, the general 
formula for $G$ a compact Lie group then follows easily by a functoriality argument.  Notice that if $G=H=\langle g
\rangle$ then
$X^g=X^G=X^H$.

\medskip
 We consider $K^G_* (X)$ and, more generally, $K^G_* (X,Y)$. We want to introduce  $K^G_* (X,Y)_\frak{p}$. We describe briefly the localized group $KK_*^G(A,B)_\frak{p}$ for any pair of $G$-$C^*$-algebras $A$ and $B$. 
  First, $KK_*^G(A,B)$ is a $R(G)-$module 
 in the following way. Recall that, $G$ being compact, $R(G)$ may be identified with $KK_0^G( \C , \C)$, then the general intersection product of  $KK_0^G( \C , \C)$ with $KK_*^G(A,B)$ endows $KK_*^G(A,B)$ with a structure of $R(G)-$module.
 Then $KK_*^G(A,B)_\frak{p}$ is defined as in  \eqref{eq:loc}  with $R=R(G)$, $V=KK_*^G(A,B)$ and $\frak{p}$  the prime ideal 
 consisting of the virtual representations whose character vanishes at $g$. 
 
 We have endowed $X$ with a $G$-invariant wedge metric.  Let $[D_X] \in K_*^G(X)$ be the equivariant $K-$homology class of the signature operator on $X$. As above, let $F$ be a component of 
 $X^g=X^G$; as the inclusion of $F$ into $X$ is assumed to be equivariantly strongly normally non singular we know that $F$ is a   Witt pseudomanifold and since $X$ is oriented and the normal bundle $E_F$ is complex, hence oriented, it follows that $F$ receives an orientation.
As already remarked, for any such $F$ there exists  a $G-$equivariant tubular neighborhood $U_F$ of $F$ in $X$ and  an equivariant  stratified diffeomorphism
  $$\phi : U_F\to E_F$$ with  $E_F$ the equivariant normal bundle of $F$ in $X$. 
 Such equivariant tubular neighbourhoods are provided in many examples by the transversality results explained
  in detail in
  Theorem \ref{thm:normal}.
 We can and we shall assume that our $G$-invariant wedge metric has the following structure: if $U_F$ is a $G$-invariant tubular neighbourhood of $F$ and $\psi: E_F\to U_F$ is the $G$-equivariant stratified diffeomorphism
  given by $\phi^{-1}$,  then $\psi^* (g|_{U_F})$ is equal to $g_F + h$, with $g_F$ a wedge metric on the Witt space $F$ and
$h$ a $G$-invariant bundle metric along the fibers of the normal bundle $E_F$.
  Consider the natural map
$$
\alpha_F: KK_*^G( C(X) , \bbC) \rightarrow KK_*^G( C_0(E_F) , \bbC) \,
$$ 
obtained by composing the restriction map $ KK_*^G( C(X) , \bbC) \rightarrow KK_*^G( C_0(U_F),\bbC)$
with the isomorphism $\phi_* :  KK_*^G( C_0(U_F),\bbC)\to  KK_*^G( C_0(E_F),\bbC)$ induced by the equivariant  stratified diffeomorphism $\phi$. 
We have the following analogue of Theorem 3.7 in \cite{G-Signature}:
 \begin{proposition} \label{prop:alpha} Let $p:E_F\to F$ be the equivariant normal bundle
 associated to $F$. One has the following equality 
 $$
 \alpha_F ([D_X]) = \Sigma (p)\otimes [D_F]
 $$ 
 where a Kasparov product appears on the right hand side,
  $\Sigma (p)\in KK^G_* ( C_0(E_F) , C(F))$ denotes the  bivariant class defined by the family of signature operators along the fibers of $E_F$ and where $[D_F]\in KK( C(F) , \bbC)$ denotes the $K-$Homology class of the signature operator on $F$.
 \end{proposition}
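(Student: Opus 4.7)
The plan is to reduce the proposition to two separate ingredients: first, to identify $\alpha_F([D_X])$ with the equivariant $K$-homology class $[D_{E_F}]$ of the signature operator on the total space of the normal bundle $E_F$, equipped with the pulled-back product wedge metric; and second, to invoke the Gysin-type formula of Theorem \ref{gysin}, applied to the $G$-equivariant Thom-Mather vector bundle $p: E_F \to F$, to conclude
\[
[D_{E_F}] \;=\; \Sigma(p)\otimes [D_F] \;\;\text{in}\;\; KK^G_*(C_0(E_F),\mathbb{C}).
\]
Concatenating the two gives the statement.

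For the first step, I would exploit the explicit product structure of the chosen $G$-invariant wedge metric near $F$: by hypothesis, $\psi^*(g|_{U_F}) = g_F + h$, so $\psi: E_F \to U_F$ is an isometry of wedge-Riemannian stratified spaces once $E_F$ is endowed with the bundle-type wedge metric $g_F + h$. Consequently, $\psi^*$ intertwines the signature operator on $U_F$ with the wedge signature operator $D_{E_F}$ on the normal bundle, and is $G$-equivariant by the $G$-invariance of $g$ and the choice of $\psi$. On the level of Kasparov cycles, the restriction of $[D_X] \in KK^G(C(X),\mathbb{C})$ via the ideal inclusion $C_0(U_F)\hookrightarrow C(X)$ is represented, after discarding the degenerate summand supported on $X\setminus U_F$ through a $G$-invariant cutoff excision, by a cycle unitarily equivalent under $\psi^*$ to the standard cycle for $[D_{E_F}]$. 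This is essentially the $G$-equivariant, wedge analogue of Rosenberg's argument for Theorem~3.7 of \cite{G-Signature}, and it uses in an essential way Proposition \ref{diffeo-invariance} (stratified diffeomorphism invariance of the signature class) together with the locality of wedge Dirac-type operators.

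For the second step, the hypotheses of Theorem \ref{gysin} are met: $F$ is a Witt pseudomanifold by Proposition \ref{thm:F}, it is even-dimensional by Proposition \ref{cor:even}, the normal bundle $E_F \to F$ is a $G$-equivariant Thom-Mather vector bundle, and the metric $g_F + h$ is precisely the kind of bundle metric with respect to which $[D_{E_F}]$ equals $\Sigma(p)\otimes [D_F]$. Thus Theorem \ref{gysin} immediately produces the required Kasparov product decomposition.

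The main obstacle is the first step: although the operator and metric agree on $U_F$ by construction, one must justify rigorously that the $KK^G$-class resulting from restriction-then-pushforward coincides with the class $[D_{E_F}]$ defined intrinsically on the non-compact space $E_F$. The subtlety lies in handling the non-compactness and the mismatch between the ambient Hilbert space $L^2(X,\Omega)$ and the intrinsic $L^2(E_F,\Omega)$; this is reconciled by the excision principle in equivariant $K$-homology together with the product structure of the metric near $F$, which guarantees that the wedge signature operator on $X$ restricts cleanly (modulo a degenerate Kasparov module) to the one on $E_F$. Everything must be checked $G$-equivariantly, but this follows automatically since the cutoff functions can be averaged over the compact group $G$ and the metric is $G$-invariant throughout.
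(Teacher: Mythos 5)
Your proposal follows exactly the paper's two-step strategy: first reduce $\alpha_F([D_X])$ to $[D_{E_F}]$ by restriction/naturality plus stratified-diffeomorphism invariance (Proposition~\ref{diffeo-invariance}), then apply Theorem~\ref{gysin} to factor $[D_{E_F}] = \Sigma(p)\otimes[D_F]$. The paper's actual proof is a terse two sentences along the same lines, so your extra care with the product metric, excision of the degenerate summand, and the non-compactness of $E_F$ is supplying details the paper leaves implicit rather than deviating from its argument.
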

 \begin{proof}  By naturality and stratified diffeomorphism invariance, Proposition \ref{diffeo-invariance},
 we have $\alpha_F ([D_X])= [D_{E_F}]$. The result then follows from Theorem \ref{gysin}.
 \end{proof}

Recall that we have assumed that  $E_F$ is a complex $G-$equivariant vector bundle over $F$ whose fibers are therefore complex vector spaces. If we want to stress the complex structure
  of the normal bundle we denote it by $E_{F,c}$ or simply 
  $E_c$ if $F$ is understood.  Let us fix $F$ in the sequel.
 Consider the virtual complex vector bundle $\wedge_{-1} E_c = \bigwedge^{even} E_c - \bigwedge^{odd} E_c$ over $F$.
 Its space of sections define a bi-module over $C(F)$ so that $\wedge_{-1} E_c$ defines a bivariant class denoted $[[\wedge_{-1} E_c]]$
 in $KK^G(C(F), C(F))$; alternatively we can define $[[\wedge_{-1} E_c]]$ as the Kasparov product 
 $ [\wedge_{-1} E_c] \otimes \Delta_F$ of $[\wedge_{-1} E_c]  \in KK^G( \bbC , C(F))$ 
 with the class $\Delta_F \in KK^G( C(F) \otimes C(F), C(F))$ defined by the diagonal immersion $F \rightarrow F \times F$. 
 More precisely for a general $C^*-$algebra $D$, the Kasparov product in its general version is a bilinear map
 $$
 KK^G( \bbC , C(F)) \times KK^G( C(F) \otimes D, C(F)) \rightarrow KK^G( D , C(F))\,,
 $$ and we apply it to the case $D= C(F)$.
 
 \smallskip
 \noindent
 For a proof of the following Proposition we refer to \cite[Lemma 2.7]{ASII} and  \cite[Proposition 3.8]{G-Signature} 
 \begin{proposition}
 The element  $[[\wedge_{-1} E_{c}]]$ 
    is invertible in $KK^G(F, F)_\mathfrak{p}$. 
    \end{proposition}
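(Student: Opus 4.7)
The plan is to reduce the question to equivariant K-theory on $F$, exploit that $g$ acts trivially on $F$, and then decompose $E_c$ into $g$-weight spaces to verify invertibility after inverting the complement of $\mathfrak p$. First I would pass from $KK^G(C(F),C(F))$ to $K^0_G(F)=KK^G(\mathbb C,C(F))$. The construction of $[[\wedge_{-1}E_c]]$ via the diagonal class $\Delta_F\in KK^G(C(F)\otimes C(F),C(F))$ turns the assignment $x\mapsto[[x]]$ into a unital ring homomorphism $K^0_G(F)\to KK^G(C(F),C(F))$ (with unit $\Delta_F$, which is a KK-equivalence after multiplication by the class of $1$). This homomorphism is $R(G)$-linear, hence localizes; invertibility of $[\wedge_{-1}E_c]$ in $K^0_G(F)_\mathfrak p$ therefore implies invertibility of $[[\wedge_{-1}E_c]]$ in $KK^G(C(F),C(F))_\mathfrak p$.

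Next, since $G=\langle g\rangle$ is topologically cyclic and $F\subset X^g=X^G$, the action of $G$ on $F$ is trivial. Consequently every $G$-equivariant complex bundle on $F$ decomposes canonically as $V=\bigoplus_\chi V_\chi\otimes\chi$ indexed by characters $\chi\in\widehat G$, giving an $R(G)$-linear isomorphism $K^0_G(F)\cong K^0(F)\otimes_{\mathbb Z}R(G)$. Applied to the normal bundle this yields $E_c=\bigoplus_j E(e^{i\theta_j})$, the eigenbundle decomposition used already in Section \ref{sect:intro}; crucially $e^{i\theta_j}\neq 1$ for every $j$, because otherwise $E_c$ would contain a nonzero $g$-fixed vector, contradicting the fact that $F$ is a full connected component of the fixed set and $E_c$ is its normal bundle.

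The third step is the algebraic heart: show that each factor $\wedge_{-1}E(e^{i\theta_j})$ is invertible in $K^0_G(F)_\mathfrak p$, hence so is their product $\wedge_{-1}E_c=\prod_j\wedge_{-1}E(e^{i\theta_j})$. Fix one such summand $V=E(e^{i\theta})$ of complex rank $s$, and apply the splitting principle: formally $V=\bigoplus_m L_m$ with $g$ acting on each line bundle $L_m$ by $e^{i\theta}$. Then
\[
\wedge_{-1}V=\prod_m(1-e^{i\theta}L_m)=\prod_m\bigl((1-e^{i\theta})-e^{i\theta}(L_m-1)\bigr).
\]
The scalar $1-e^{i\theta}\in R(G)$ has character $1-e^{i\theta}\neq 0$ at $g$, so it lies outside $\mathfrak p$ and is invertible in $R(G)_\mathfrak p$. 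Each element $L_m-1\in K^0(F)$ is nilpotent (its restriction to any point vanishes, and $K^0(F)$ is generated from $\mathbb Z$ by such elements modulo a nilpotent augmentation ideal when $F$ is compact; more precisely $L_m-1$ lies in the augmentation ideal, all of whose elements in the reduced ring are nilpotent up to a rational denominator—passing to $K^0(F)\otimes\mathbb Q$ suffices for us). Writing the product as $(1-e^{i\theta})^s(1+N)$ with $N$ nilpotent in the localized ring, both factors are invertible, so $\wedge_{-1}V$ is invertible in $K^0_G(F)_\mathfrak p$.

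The main obstacle is keeping the splitting-principle bookkeeping honest: one must justify that ``$L_m-1$ is nilpotent after localization'' suffices despite $K^0(F)$ being, a priori, only a ring associated to the singular space $F$, and that the splitting principle is available in this equivariant/singular setting. Both are handled by reducing to the underlying non-equivariant $K$-theory of $F$ via the trivial-action isomorphism $K^0_G(F)\cong K^0(F)\otimes R(G)$ and then invoking the Grothendieck--Atiyah splitting principle in $K^0(F)\otimes\mathbb Q$ (noting that $R(G)_\mathfrak p$ is a $\mathbb Q$-algebra when $G$ is topologically cyclic compact Lie, or, if torsion is present, by invoking Segal's splitting for compact Lie group actions as in \cite[Lemma 2.7]{ASII}). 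Once this is in place, the three steps combine to give invertibility of $[[\wedge_{-1}E_c]]$ in $KK^G(C(F),C(F))_\mathfrak p$ as required.
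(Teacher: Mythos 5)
Your proposal is essentially correct, and in fact it reconstructs precisely the argument that the paper outsources: the paper gives no proof of its own here, but simply cites Atiyah--Segal \cite[Lemma 2.7]{ASII} together with Rosenberg \cite[Proposition 3.8]{G-Signature}, and what you have written out is the standard proof of that lemma adapted to the $KK$-setting. Your three steps -- (i) reduce to $K^0_G(F)_\mathfrak{p}$ via the unital ring homomorphism $K^0_G(F)\to KK^G(C(F),C(F))$ given by Kasparov product with $\Delta_F$; (ii) use triviality of the $G$-action on $F\subset X^G$ to write $K^0_G(F)\cong K^0(F)\otimes R(G)$ and decompose $E_c$ into weight bundles $E(e^{i\theta_j})$ with $e^{i\theta_j}\neq 1$; (iii) show each $\wedge_{-1}E(e^{i\theta_j})$ is a unit after localizing at $\mathfrak p$ -- are the same three steps as in ASII.

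One place you could tighten things: step (iii) does not really need the splitting principle or the $\otimes\mathbb{Q}$ hedging. Since $F$ is a compact Witt pseudomanifold of the homotopy type of a finite CW complex, the reduced group $\widetilde K^0(F)\subset K^0(F)$ is a nilpotent ideal (over $\mathbb Z$, by the skeletal filtration), hence $N:=\widetilde K^0(F)\otimes R(G)_\mathfrak{p}$ is a nilpotent ideal of $K^0_G(F)_\mathfrak{p}$ with quotient $R(G)_\mathfrak{p}$. The image of $\wedge_{-1}E_c$ in that quotient is just $\lambda_{-1}$ of the fiber, whose character at $g$ is $\prod_j(1-e^{i\theta_j})^{s_j}\neq 0$; so $\wedge_{-1}E_c$ is a unit in $K^0_G(F)_\mathfrak{p}$, and by step (i) so is $[[\wedge_{-1}E_c]]$ in $KK^G(F,F)_\mathfrak{p}$. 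This sidesteps the flag-bundle bookkeeping you were (rightly) worried about justifying in a stratified/equivariant context. With that streamlining, the argument is clean, and it is exactly the one the paper is invoking by citation.
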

    
    \noindent
     We denote the image of $[[\wedge_{-1} E_{c}]]$ in 
    $KK^G(F, F)_\mathfrak{p}$ by $[[\wedge_{-1} E_{c}]]_\mathfrak{p}$.
 Lastly consider the Thom bivariant class $\tau_F \in KK^G_*(  C(F) ,  C_0(E_F))$; as we are assuming that
 $E_F$ is a complex vector bundle, we know
 that this class is invertible
 and it induces, by Kasparov product,
 the Thom isomorphism. We denote by $\tau_{F,\mathfrak{p}}$ the image of $\tau_F$ in $KK^G_*(  C(F) ,C_0(E_F))_\mathfrak{p}$.

\begin{definition}\label{def:gamma-hom} Define the homomorphism $\gamma_F : KK^G_*( C_0(E_F) , \bbC)_\mathfrak{p} \rightarrow KK^G_*( C(F) , \bbC)_\mathfrak{p} $ by the following Kasparov products:
 $$
 \gamma_F( x) = ( [[\wedge_{-1} E_c]] _\mathfrak{p})^{-1} \otimes \tau_{F,\mathfrak{p}} \otimes x\,.
 $$
This is   an isomorphism, being the composition of two isomorphisms.\end{definition}  

\noindent
The map $\gamma_F$ makes explicit the isomorphism stated in  Theorem 3.1 of \cite{G-Signature},  between  $K^G_*(U_F)_\mathfrak{p}$ and  $ K^G_*(F)_\mathfrak{p}$. Notice that in
 \cite{G-Signature} the tubular neighbourhood $U_F$ and the normal bundle $E_F$ are treated as the same object.
 
 \medskip
 \noindent
We shall be interested in the class  $ \gamma_F(\alpha_F [D_X]_\mathfrak{p}) $.

 \subsection{The G-signature  formula}
We finally come to the $G$-signature formula, that is a geometric formula for ${\rm Sign}\,( g, X)$. The proof proceeds in two steps. In the first step  we localize the computation of ${\rm Sign}\,( g, X)$ to the
 connected components $F$ of the fixed point set of $g$.  This is achieved in \eqref{eq:sign}; see also Definition \ref{def:contribution}
 where we define formally the {\it contribution} of a connected component $F$ to ${\rm Sign}\,( g, X)$.
  In a second step, later in this Subsection, we give a more precise (and geometric) formula 
  for these contributions.
  Here we 
  follow \cite{G-Signature} but  give a number of details, building on \cite{Puschnigg}. We thank Jonathan Rosenberg for suggesting the use of \cite{Puschnigg}.

\medskip
 Consider the projection $\pi: X \rightarrow \{A\}$ to a point; by Proposition \ref{prop:K-hom}
  we know that  ${\rm Sign}\,( g, X) = \pi_* ([D_X] )(g)$ with $D_X$ denoting as usual the
  signature operator associated to a $G$-invariant wedge metric.\\
  The following Lemma-Definition is elementary:
  \begin{lemma} One defines in an intrinsic way a map $\theta_g: R(G)_\mathfrak{p} \rightarrow \bbC$
  by the following formula:\\
if $a= \chi/ \psi \in R(G)_\mathfrak{p}$ then we set $\theta_g (a) = \frac{\chi (g)}{\psi ( g)}$. \end{lemma}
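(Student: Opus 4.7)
The claim is really a well-definedness statement, and the cleanest route is to recognize $\theta_g$ as the localization of the character evaluation homomorphism. I would proceed as follows.

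First I would recall that character evaluation at $g$ defines a ring homomorphism
\[ \operatorname{ev}_g : R(G) \longrightarrow \bbC, \qquad [V]-[W] \longmapsto \operatorname{tr}(g|_V) - \operatorname{tr}(g|_W). \]
By the very definition of $\mathfrak{p}$ given earlier in the paper, $\mathfrak{p} = \ker \operatorname{ev}_g$. In particular, for every $\psi \in R(G) \setminus \mathfrak{p}$, one has $\psi(g) \neq 0$, so $\operatorname{ev}_g$ sends the multiplicative set $R(G)\setminus \mathfrak{p}$ into the units $\bbC^\times \subset \bbC$.

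By the universal property of localization, $\operatorname{ev}_g$ then factors uniquely through $R(G)_{\mathfrak{p}}$, producing a ring homomorphism $\theta_g : R(G)_{\mathfrak{p}} \to \bbC$ whose explicit formula on a fraction $\chi/\psi$ is $\chi(g)/\psi(g)$. This automatically takes care of well-definedness, but it is still worth making the verification concrete: if $\chi/\psi = \chi'/\psi'$ in $R(G)_{\mathfrak{p}}$, then by the equivalence relation recalled in \eqref{eq:loc} there exists $t \in R(G)\setminus \mathfrak{p}$ with $t(\psi'\chi - \psi \chi') = 0$ in $R(G)$. Evaluating characters at $g$ and using $t(g)\neq 0$, $\psi(g)\neq 0$, $\psi'(g)\neq 0$ yields $\chi(g)/\psi(g) = \chi'(g)/\psi'(g)$.

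There is no real obstacle here; the only subtlety is to make sure that one invokes the correct multiplicative set, namely that $\psi \notin \mathfrak{p}$ is precisely the condition $\psi(g)\neq 0$ required for the right-hand side to make sense in $\bbC$. The word \emph{intrinsic} in the statement is then justified by the uniqueness part of the universal property of localization: any map $R(G)_{\mathfrak{p}} \to \bbC$ extending $\operatorname{ev}_g$ along $R(G) \to R(G)_\mathfrak{p}$ must coincide with $\theta_g$.
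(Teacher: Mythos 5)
Your proof is correct and is the natural argument; the paper simply declares this ``Lemma-Definition'' elementary and offers no proof at all. Your identification of $\mathfrak{p}$ as $\ker\operatorname{ev}_g$ and the invocation of the universal property of localization (together with the concrete check via the relation \eqref{eq:loc}, using $t(g)\neq 0$ for $t\notin\mathfrak{p}$) is exactly what the authors are implicitly relying on, and it also explains the word \emph{intrinsic} cleanly.
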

 
 \noindent
 Of course, $\pi_*$ 
 induces a map, $(K^G_0(X) )_\mathfrak{p} \rightarrow K^G_0(\{A\})_\mathfrak{p}= R(G)_\mathfrak{p}$, still denoted  $\pi_*$ and it is clear from naturality that  ${\rm Sign}\,( g, X) = \theta_g (\pi_* ([D_X]_\mathfrak{p} ))$.
 Recall that $\mathcal{C}$ denotes the finite set of connected components of  $X^G$; we can assume that the $U_F$, $F \in  \mathcal{C}$, are pairwise disjoint. 
 Observe that  
 $$K_0^G (X^G)_\mathfrak{p} = \oplus_{F \in \mathcal{C}} K_0^G (F)_\mathfrak{p}= \bigl( \oplus_{F \in \mathcal{C}} K_0 (F) \bigr) \otimes R(G)_\mathfrak{p}$$ where on the second equality we have used the fact
 that the action of $G$ on $F$ is trivial.  We can
 define a map $\gamma: K_0^G (X)_\mathfrak{p} \rightarrow K_0^G (X^G)_\mathfrak{p}$ by:
 \begin{equation} \label{eq:gamma}
 \gamma (y) = \oplus_{F \in \mathcal{C}}  (\gamma_F \circ \alpha_F) (y) \,\in\,\oplus_{F \in \mathcal{C}} K_0^G (F) \otimes R(G)_\mathfrak{p}\,.
 \end{equation}
 The map $\gamma$ makes explicit an homorphism considered in Theorem 3.1 of \cite{G-Signature}. 
For $G=\langle g \rangle$, the next two Propositions will state a  K-homological analogue
 of Segal's localization Theorem. Before stating them, we need some preparations. Let $Z \subset X$ be a closed $G-$invariant subspace. The injection $i_Z: Z \rightarrow X$ defines a "restriction map": 
 $i^*_Z: KK ( \C ; C(X) ) \rightarrow KK ( \C  ; C(Z) )$. Now denote by $F_Z$ the homomorphim of $C^*-$algebras:
 \begin{equation}
 \begin{aligned}
 C(X) \rightarrow C(Z) \\
 f \mapsto f \circ i_Z \,.
 \end{aligned}
 \end{equation} Then $[ C(Z), F_Z, 0] \in KK^G_0( C(X) ; C(Z)) $ defines a Kasparov bimodule and we shall denote by
 $$i_{[Z]} : KK^G_0( C(Z) ; \C) \rightarrow KK^G_0( C(X) ; \C)$$ the map defined by the Kasparov product:
 $$
 \forall M \in KK^G_0( C(Z) ; \C) \; ,\;  i_{[Z]} ( M) = [ Z, F_Z, 0] \otimes M\,.
 $$ Next we recall the pairing $< ; >_Z$ defined by the Kasparov product:
 \begin{equation}
 \begin{aligned}
KK^G_0(\C ; C(Z) ) \times  KK^G_0( C(Z) ; \C) \rightarrow R(G) \\
(A, B)  \mapsto A \otimes B = < A ; B>_Z\,.
 \end{aligned}
\end{equation}
 
 In some sense, $i^*_Z$ and $i_{[Z]}$ are adjoint to each other:
 $$
 \forall (A , B_0) \in KK_0^G( \C ; C(X) ) \times KK_0^G( C(Z)  ; \C)\;,\, 
 < A ; i_{[Z]} (B_0)>_X = < i^*_Z(A) ; B_0>_Z\,.
 $$
 But the connexion between $KK_0^G( C(Z)  ; \C)$ and the dual of $KK_0^G(\C ; C(Z)  )$ is a very delicate matter involving the universal coefficient Theorem, see for instance \cite[Theorem 2.4]{RW87}.
 
\medskip
The next Proposition generalizes results established by Rosenberg and Weinberger
when $G$ is finite or else a torus. See \cite{RW87} and \cite{G-Signature}.
The proof that we give extends the arguments given in \cite{G-Signature} from the case $G$ finite
to more general groups.
This explains our additional assumptions. Notice that this version of the K-homology localization theorem is certainly sufficient to our needs.

\begin{proposition}\label{prop:lo} 
Let $G$ be a compact Lie group and $\fp \subset R(G)$ a prime ideal.
Let $(S)$ be the support of $\fp$.
Let $X$ be a compact $G$-space which has the $G$-homotopy type
of a finite $G$-CW-complex. Consider $X^{(S)}=\bigcup_{g\in G}
X^{gSg^{-1}}$.  If every orbit is $G$-spin${}^c$,
then the inclusion $X^{(S)} \hookrightarrow X$ induces an isomorphism
\[ i_{[X^{(S)}]}: K^G_* (X^{(S)})_\fp \stackrel{\simeq}{\longrightarrow} K^G_* (X)_\fp. \]
The orbits are $G$-spin${}^c$ if, for instance, all isotropy groups are normal,
in particular if $G$ is abelian.
\end{proposition}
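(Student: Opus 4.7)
I would prove the statement by showing that $K_*^G(X, X^{(S)})_\mathfrak{p} = 0$; the conclusion then follows from the six-term exact sequence of the pair $(X,X^{(S)})$ and the exactness of localization. Fix a finite $G$-CW structure on $X$ (using the $G$-homotopy equivalence hypothesis); since $X^{(S)}$ is a $G$-subcomplex, an inductive Mayer--Vietoris/cofiber-sequence argument over the skeleta reduces the task to showing that for each $G$-cell $G/H \times D^n$ whose orbit type $G/H$ is not contained in $X^{(S)}$, the contribution $K_*^G(G/H)_\mathfrak{p}$ vanishes. Such cells are precisely those whose isotropy $H$ contains no conjugate of $S$.

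For a single orbit $G/H$, Green's imprimitivity provides a Morita equivalence $C(G/H) \rtimes G \sim C^*(H)$, which gives $K_*^G(G/H) \cong K_*(C^*(H)) \cong R(H)$, concentrated in degree $0$ since $H$ is compact. The $G$-spin${}^c$ hypothesis on the orbit is what upgrades this abstract abelian-group isomorphism to an $R(G)$-linear one: the equivariant spin${}^c$ fundamental class $[G/H] \in K^G_*(G/H)$ provides a Poincar\'e duality identification under which the $R(G)$-module structure becomes the one obtained by restriction $R(G) \to R(H)$ followed by the natural action.

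At this point the vanishing is purely algebraic and is Segal's original localization fact. The prime $\mathfrak{p}$ has support the conjugacy class $(S)$, and the primes of $R(H)$ lying over $\mathfrak{p}$ are in bijection with conjugacy classes of cyclic subgroups of $H$ belonging to $(S)$; when $H$ contains no conjugate of $S$, this set is empty, so every element of $R(G) \setminus \mathfrak{p}$ maps to a unit in $R(H)$ and $R(H)_\mathfrak{p} = 0$. Hence $K_*^G(G/H)_\mathfrak{p} = 0$ for every orbit type outside $X^{(S)}$, closing the induction. The final parenthetical claim---that orbits are $G$-spin${}^c$ when all isotropy groups are normal (in particular if $G$ is abelian)---follows because then $H$ acts trivially on $\mathfrak{g}/\mathfrak{h}$ via the adjoint representation, so $T(G/H) = G \times_H (\mathfrak{g}/\mathfrak{h})$ admits a canonical $G$-equivariant complex, and hence spin${}^c$, structure obtained from any $H$-invariant complex structure on $\mathfrak{g}/\mathfrak{h}$.

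The main obstacle, I expect, is the middle step: verifying that the identification $K_*^G(G/H) \cong R(H)$ coming from Green's imprimitivity, together with the equivariant spin${}^c$ fundamental class, is $R(G)$-linear in the correct way so that Segal's algebraic localization can be applied verbatim. The other technical point deserving care is the passage through the $G$-CW induction when $G$ is not finite: one must ensure that the localization functor commutes with the (possibly infinite, though finite here) colimits and with the Mayer--Vietoris decompositions of characteristic maps of $G$-cells, which is where the finite $G$-CW hypothesis is essential. Once these compatibilities are in hand, the proof reduces to Segal's representation-theoretic statement in \cite{Segal}.
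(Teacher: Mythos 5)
Your overall strategy coincides with the paper's: reduce to the vanishing of $K^G_*(X,X^{(S)})_\fp$ via the long exact sequence and exactness of localization, induct over the finitely many equivariant cells, use excision and suspension to reduce each attachment $G/K\times D^m$ to the single orbit $G/K$, pass to $K^*_G(G/K)=R(K)$ via equivariant spin${}^c$ Poincar\'e duality, and finish with Segal's fact that $R(K)_\fp=0$ because the support $S$ of $\fp$ is not subconjugate to $K$. Two places deserve attention.

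First, your route through Green's imprimitivity is not needed and is slightly misstated: the Morita equivalence $C(G/H)\rtimes G\sim C^*(H)$ computes the \emph{K-theory} $K^*_G(G/H)=K^*(C(G/H)\rtimes G)\cong R(H)$, not the \emph{K-homology} $K_*^G(G/H)$ directly (the dual Green--Julg isomorphism identifying $KK^G(A,\C)$ with $K^*(A\rtimes G)$ is not available for general compact $G$). What actually transports the computation from cohomology to homology is precisely the spin${}^c$ Poincar\'e duality, which you also invoke; so your proof survives, but the Morita step is a detour. The paper skips it entirely and goes directly from $K^G_{d-*}(G/K)\cong K^*_G(G/K)=K^*_K(\operatorname{pt})=R(K)$ to Segal's localization.

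Second, your argument for the final parenthetical claim is flawed. You are right that if $K$ is normal in $G$ then $K$ acts trivially on $\mathfrak g/\mathfrak k$, so $T(G/K)=G\times_K(\mathfrak g/\mathfrak k)\cong(G/K)\times(\mathfrak g/\mathfrak k)$ is a $G$-equivariantly trivial bundle. But you then propose to endow $\mathfrak g/\mathfrak k$ with an ($K$-invariant) complex structure to conclude spin${}^c$; this is impossible whenever $\dim(\mathfrak g/\mathfrak k)$ is odd (for instance $G=S^1$, $K=\{1\}$). No complex structure is needed: a $G$-equivariantly trivial real vector bundle is automatically $G$-spin (take the constant lift of the trivial frame to $\operatorname{Spin}(n)$), hence $G$-spin${}^c$. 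That is exactly the paper's argument, which also uses the cleaner observation that $G/K$ is itself a Lie group when $K$ is normal.
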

\begin{proof} 
By $G$-homotopy invariance of $K^G_*$, we may assume that $X$
is a finite $G$-CW-complex.
The $G$-CW pair $(X, X^{(S)})$ has an associated long exact sequence
\[ 
 \cdots \stackrel{\partial_*}{\longrightarrow}
  K^G_* (X^{(S)}) \longrightarrow 
  K^G_* (X) \longrightarrow
  K^G_* (X, X^{(S)}) \stackrel{\partial_*}{\longrightarrow} \cdots
\]
of $R(G)$-modules. 
Localization at $\fp$ is an exact functor. Thus the localized sequence 
\[ 
 \cdots \stackrel{\partial_*}{\longrightarrow}
  K^G_* (X^{(S)})_\fp \longrightarrow 
  K^G_* (X)_\fp \longrightarrow
  K^G_* (X, X^{(S)})_\fp \stackrel{\partial_*}{\longrightarrow} \cdots
\] 
is still exact. 
The statement of the proposition is thus equivalent to 
the vanishing of the relative group $K^G_* (X, X^{(S)})_\fp$, which
we shall show by an induction on the finitely many equivariant cells.

Let $Y \subset X$ be a $G$-subcomplex such that
$Y \supset X^{(S)}$ and $K^G_* (Y, X^{(S)})_\fp =0$.
(These conditions are satisfied when $Y = X^{(S)}$, which furnishes
the induction start.)
Suppose that $Y' = Y \cup (G/K \times D^m)$ is obtained from $Y$
by attaching an equivariant cell. Once we have proved that
$K^G_* (Y',Y)_\fp =0$, then the exact sequence of the
triple $(Y', Y, X^{(S)})$,
\[ 
 \cdots \stackrel{\partial_*}{\longrightarrow}
  K^G_* (Y, X^{(S)})_\fp \longrightarrow 
  K^G_* (Y', X^{(S)})_\fp \longrightarrow
  K^G_* (Y', Y)_\fp \stackrel{\partial_*}{\longrightarrow} \cdots,
\] 
will show that $K^G_* (Y', X^{(S)})_\fp =0$.
So we reach the desired conclusion for $Y= X$ in finitely many
inductive steps.

Hence the central task is to show that $K^G_* (Y',Y)_\fp =0$
when $Y'$ is obtained from $Y$ by attaching a single equivariant cell
$G/K \times D^m$. (The group $G$ acts trivially on the disk $D^m$.)

We shall show first that the support $S$ of $\fp$
cannot be subconjugate to $K$.
For suppose, by contradiction, that $T:= gSg^{-1} \subset K$ for some $g\in G$.
Consider the base point $eK \in G/K$, where $e\in G$ is the neutral element.
Since $T\subset K$, we have for every $t\in T$,
\[ t\cdot (eK) = tK = K = eK. \]
Thus $eK$ is a $T$-fixed point of $G/K$. 
As $G$ acts trivially on $D^m$, every point
$(eK, q) \in G/K \times D^{m\circ}$ is a $T$-fixed point
of the open cell. All of these points are then in
$X^T = X^{gSg^{-1}} \subset X^{(S)}$.
But by construction,
\[ G/K \times D^{m\circ} \subset X-Y \subset X- X^{(S)}. \]
This shows that $S$ is indeed not subconjugate to $K$ in $G$.

By excision,
\[  K^G_* (Y',Y)_\fp \cong K^G_* ((G/K) \times (D^m, \partial D^m))_\fp. \]
For a $G$-space $A$, let $A^+$ denote the $G$-space obtained by
taking the union of $A$ with a disjoint $G$-fixed point and let
$\Sigma^m A^+$ denote its $m$-fold suspension as a $G$-space.
Then by the (equivariant) suspension isomorphism,
\[ K^G_* ((G/K) \times (D^m, \partial D^m))
  = \widetilde{K}^G_* (\Sigma^m (G/K)^+)
  \cong \widetilde{K}^G_{*-m} ((G/K)^+)
   = K^G_{*-m} (G/K). \]
 The orbit $G/K$ is a smooth compact manifold; let $d$ denote its dimension.
By assumption, $G/K$ is $G$-spin${}^c$.
Therefore, $G$-equivariant Poincar\'e duality is available
(see Walter \cite[p. 57, 1.11.23]{Walter}) and asserts that 
cap product with the fundamental class is an isomorphism
\[ K^*_G (G/K) \cong K^G_{d-*} (G/K). \]
Now, for the cohomological group we know that
$K^*_G (G/K) = K^*_K (\operatorname{pt}) = R(K)$.

We recall a key representation theoretic fact established
by Segal \cite[p. 125, Prop. (3.7)]{Segal}:
Let $H$ be any Lie subgroup of $G$.
Then $R(H)_\fp \not= 0$ if and only if
the support $S$ of $\fp$ is subconjugate to $H$.
We established earlier that $S$ is not subconjugate to $K$ in $G$.
Therefore, $R(K)_\fp =0$.
In summary, we find (neglecting the degree transformations in the notation) that
\[ K^G_* (Y',Y)_\fp \cong K^G_* (G/K)_\fp
   \cong K^*_G (G/K)_\fp \cong R(K)_\fp =0, \]
as was to be shown.

Lastly, suppose that all isotropy groups $K$ are normal in $G$.
In this case, $G/K$ is a Lie group
(Lee \cite[p. 232, Prop. 9.29]{Lee}).
Hence the tangent bundle of $G/K$ is $G$-equivariantly trivial 
and so $G/K$ is $G$-spin, and thus $G$-spin${}^c$.
\end{proof} 

Let us provide a class of  relevant examples of $G$-spaces that possess 
the structure of $G$-CW complex.
Recall that a topological group $G$ is called \emph{subanalytic}
if it is contained in some real analytic manifold $M$ as a subanalytic
subset.
We note that every finite group is a subanalytic group. 
We assume that if a subanalytic group $G \subset M$ acts
on a subanalytic set $X \subset N$, then it does so subanalytically,
i.e. the graph of the action $G \times X \to X$ is subanalytic in
$M\times N \times N$.

\begin{proposition} \label{prop.subanalyticgtriang}
Let $X$ be a locally compact subanalytic set and let $G$ be a
subanalytic proper transformation group of $X$.
Then $X$ admits a $G$-CW structure, in fact, a $G$-equivariant triangulation.
\end{proposition}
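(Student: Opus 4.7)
The plan is to combine the Hironaka--Hardt--\L{}ojasiewicz subanalytic triangulation theorem with Illman's equivariant triangulation methods for proper actions, applied throughout in the subanalytic category. The first step is to construct a $G$-invariant subanalytic Whitney stratification of $X$ whose strata all have constant orbit type. Since the $G$-action is proper, every isotropy subgroup $G_x$ is compact, and the orbit type decomposition
\[ X = \bigsqcup_{(H)} X_{(H)}, \qquad X_{(H)} = \{ x \in X \mid G_x \text{ is conjugate to } H \}, \]
is $G$-invariant. Subanalyticity of the action, together with subanalyticity of the compact subgroups appearing as isotropies, ensures that each $X_{(H)}$ is subanalytic, and the decomposition can be refined to a $G$-invariant Whitney stratification $\mathcal{S}$ by standard subanalytic techniques.

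Second, I would pass to the orbit space $Y = X/G$. Properness of the action implies that $Y$ is Hausdorff and locally compact, and subanalyticity of the graph of the action together with a subanalytic choice of local slices endows $Y$ with a canonical subanalytic structure. The stratification $\mathcal{S}$ descends to a subanalytic stratification $\bar{\mathcal{S}}$ of $Y$, and I would then apply the Hironaka--Hardt subanalytic triangulation theorem to produce a subanalytic triangulation $T_Y$ of $Y$ compatible with $\bar{\mathcal{S}}$, in the sense that every open simplex of $T_Y$ is contained in a single stratum of $\bar{\mathcal{S}}$.

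The remaining and most delicate step is the equivariant lift: constructing a $G$-equivariant subanalytic triangulation of $X$ whose image under the projection $p \colon X \to Y$ refines $T_Y$. Over an open simplex $\mathring{\sigma}$ of $T_Y$ contained in a stratum of orbit type $(H)$, the preimage $p^{-1}(\mathring{\sigma})$ is a locally trivial bundle with fiber the homogeneous space $G/H$, which admits a $G$-equivariant triangulation by Illman's theorem. The hard part is to glue these fiberwise triangulations compatibly across the interfaces where the orbit type jumps. The resolution is an induction on the depth of $\mathcal{S}$: first triangulate the deepest stratum equivariantly, then extend to a $G$-invariant subanalytic tubular neighborhood using the slice theorem for proper actions, which locally models $X$ near an orbit of type $(K)$ as $G \times_K V$ for a $K$-representation $V$; an equivariant subanalytic triangulation of such a twisted product exists by Illman's theorem applied to the compact group $K$ acting linearly on $V$. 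Propagating this induction upward through the stratification and matching the fiberwise triangulations to $T_Y$ simplex by simplex produces the required $G$-equivariant subanalytic triangulation, which in particular endows $X$ with the structure of a $G$-CW complex.
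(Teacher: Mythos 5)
Your overall strategy — triangulate the orbit space compatibly with orbit types, then lift equivariantly — is the same as the paper's, but the execution diverges at exactly the two places where the real content lies, and in each case you have a gap.

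First, you assert that properness plus subanalyticity of the action "endows $Y = X/G$ with a canonical subanalytic structure," and then apply the Hironaka--Hardt triangulation theorem to $Y$. But the existence of a subanalytic structure on $X/G$ making $q$ subanalytic, together with a subanalytic triangulation of $X/G$ compatible with the orbit-type decomposition $\{q(X_{(H)})\}$, is a genuine theorem — precisely the Matumoto--Shiota result \cite[Cor.\ 3.5]{matumotoshiota} that the paper invokes. You cannot get this from Hironaka--Hardt on its own, because you first need to know $X/G$ is subanalytic and that the orbit-type decomposition descends subanalytically, which is what Matumoto--Shiota prove. Your detour through a $G$-invariant Whitney stratification is not needed and does not circumvent this.

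Second, and more seriously, the "remaining and most delicate step" you describe — gluing fiberwise triangulations of $p^{-1}(\mathring\sigma)\cong$ bundle with fiber $G/H$ across the loci where orbit type jumps, by induction on depth, slices, and matching along tubular-neighborhood boundaries — is precisely the content of Illman's general equivariant triangulation theorem \cite[Thm.\ 5.5]{illman}, whose hypothesis is that the orbit space comes with a triangulation over whose open simplices the isotropy type is constant, and whose conclusion is the existent of an equivariant lift (with orbit-space triangulation the barycentric subdivision). Instead of citing this theorem and verifying its hypothesis, you attempt to reprove it in a paragraph. The sketch does not actually carry out the subdivision and matching argument on which everything hinges, so as written this is a gap: you have named the difficulty without resolving it. The paper resolves it simply by checking (via the saturation identity $q^{-1}(q(X_{(H)})) = X_{(H)}$) that compatibility with the orbit-type decomposition forces constant isotropy type over each open simplex of $\tau$, and then invoking Illman's Theorem 5.5 and Proposition 6.1 to get, respectively, the equivariant triangulation and the $G$-CW structure. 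If you replace your inductive lifting sketch by that verification plus citation, your argument becomes correct and essentially coincides with the paper's.
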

\begin{proof}
Let $\{ X_{(H)}  \}$ be the decomposition of $X$ by orbit types,
\[ X_{(H)} = \{ x \in X ~|~ (G_x) = (H)  \},  \]
where $(H)$ ranges over all isotropy types of the $G$-action.
Let $q: X \to X/G$ denote the quotient map.
Points $x^* \in X/G$ have a well-defined notion of
$G$-isotropy type because all points of an orbit have the same
isotropy type.
The decomposition of $X$ induces an orbit type decomposition
$\{ q(X_{(H)})  \}$ of $X/G$.

By the subanalytic triangulation theorem
\cite[Cor. 3.5]{matumotoshiota} of Matumoto and Shiota, 
the orbit space $X/G$ has a unique
subanalytic structure such that the quotient map 
$q:X \to X/G$ is subanalytic, and there exists a subanalytic triangulation
of $X/G$ compatible with the orbit type decomposition $\{ q(X_{(H)})  \}$.
Thus there is a simplicial complex $K$ and a subanalytic homeomorphism
$\tau: |K| \to X/G$. Compatibility with the decomposition $\{ q(X_{(H)})  \}$
means that for every isotropy type $(H)$, the space $q(X_{(H)})$ is a union
of open simplices in $K$, i.e.
\begin{equation} \label{equ.qxhunionofopensimplices}
q(X_{(H)}) = \bigcup_j \tau (\Delta^\circ_j)  
\end{equation}
with $\Delta_j \in K$.

Now let $\Delta \in K$ be any simplex and let $x^*,~ y^* \in \tau (\Delta^\circ)$
be points in its interior, viewed in $X/G$.
We claim that $x^*$ and $y^*$ have the same isotropy type.
Thus for $x \in q^{-1} (x^*)$ and $y \in q^{-1} (y^*)$, we need to see
that $G_x$ and $G_y$ are conjugate in $G$.
By (\ref{equ.qxhunionofopensimplices}),
there exists an isotropy type $(H)$ such that
$\tau (\Delta^\circ) \subset q(X_{(H)})$.
If $A \subset X$ is any subset, then its saturation $q^{-1} (q(A))$ is given
by $G\cdot A$. Since $A := X_{(H)}$ is already a union of orbits, we have
$q^{-1} (q(X_{(H)})) = X_{(H)}$.
Therefore,
\[  q^{-1} (\tau (\Delta^\circ)) \subset q^{-1} (q(X_{(H)})) = X_{(H)}. \]
Since $x$ and $y$ are in $q^{-1} (\tau (\Delta^\circ))$, it follows that
$x,y \in X_{(H)}$. Consequently, $(G_x) = (H) = (G_y)$, which
establishes the claim.

Thus the isotropy type is constant over the open simplices 
of the triangulation $\tau$.
By Illman's general equivariant triangulation theorem
\cite[p. 497, Thm. 5.5]{illman},
$X$ admits an equivariant triangulation (in which the triangulation of
the orbit space is the barycentric subdivision of $\tau$).
This equivariant triangulation, according to \cite[p. 498, Prop. 6.1]{illman},
endows $X$ with an equivariant CW complex structure.
\end{proof}

\medskip
\noindent
Now let us go back to the case in which $G$ is compact topologically cyclic so that 
$\mathfrak{p}$ is the prime ideal of $R(G)$ consisting of virtual representations whose character vanishes at $g$, $\mathfrak{p}=\{[V]-[W]\in R(G)\;|\; {\rm tr} (g|_V)-{\rm tr} (g|_W) =0\}$;
 then the support  $\mathfrak{p}$ coincides with 
 $G$, $X^{(S)}=X^G=X^g$ and we have an isomorphism
 $$i_{[X^G]}: KK_0^G (C(X^G) ; \C )_\mathfrak{p} \rightarrow  KK_0^G (C(X) ; 
 \C)_\mathfrak{p}\,.$$  
 
 \begin{proposition} \label{lem:locc} The map $\gamma: K_0^G (X)_\mathfrak{p} \rightarrow K_0^G (X^G)_\mathfrak{p}$ coincides with the inverse $i_{[X^G]}^{-1}$ of the above map
 and  is thus an isomorphism.
 \end{proposition}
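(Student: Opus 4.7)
The plan is to establish the identity $\gamma \circ i_{[X^G]} = \operatorname{id}$ on $K^G_0(X^G)_\mathfrak{p}$; combined with the isomorphism $i_{[X^G]}$ from Proposition \ref{prop:lo}, this will force $\gamma = i_{[X^G]}^{-1}$ and thus prove the statement. Since $X^G$ is the disjoint union of its connected components $F \in \mathcal{C}$ and the tubular neighbourhoods $U_F$ can be chosen pairwise disjoint, we have $i_{[X^G]} = \sum_F i_{[F]}$, and the task decomposes into verifying, for each ordered pair $(F, F')$ in $\mathcal{C}\times\mathcal{C}$, the componentwise relation
\[ (\gamma_{F'} \circ \alpha_{F'}) \circ i_{[F]} = \delta_{F, F'} \, \operatorname{id}_{K^G_0(F)_\mathfrak{p}}. \]

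For $F \neq F'$ I will argue directly at the level of Kasparov modules: the class $i_{[F]}(y)$ is represented by a module whose $C(X)$-action factors through $C(F)$, so that functions vanishing on $F$ act trivially; since $F \cap U_{F'} = \emptyset$, the extension-by-zero homomorphism $C_0(U_{F'}) \to C(X)$ lands in such functions, hence the restriction vanishes and $\alpha_{F'} \circ i_{[F]} = 0$. For $F' = F$, the equivariant stratified diffeomorphism $\phi: U_F \to E_F$ sends $F\subset U_F$ onto the zero section $s_0(F)\subset E_F$, so the composite $F \hookrightarrow X \hookleftarrow U_F \stackrel{\phi}{\to} E_F$ agrees with the zero section inclusion $s_0: F \hookrightarrow E_F$. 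A naturality argument should then yield $\alpha_F \circ i_{[F]} = (s_0)_*$, the K-homological pushforward given by Kasparov product with the class $[s_0^*] \in KK^G(C_0(E_F), C(F))$ attached to the restriction $*$-homomorphism $s_0^*: f \mapsto f|_F$.

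The computation will then hinge on the equivariant Koszul--Thom identity
\[ \tau_F \otimes_{C_0(E_F)} [s_0^*] = [[\wedge_{-1} E_c]] \quad \text{in } KK^G(C(F), C(F)), \]
which reflects the K-theoretic self-intersection formula for the zero section of a complex equivariant vector bundle, with the Koszul complex $\wedge^\bullet E_c$ providing a chain-level realisation. Granting this identity, a direct Kasparov-product manipulation gives
\[ \gamma_F\bigl((s_0)_*(y)\bigr) = ([[\wedge_{-1} E_c]]_\mathfrak{p})^{-1} \otimes \tau_{F,\mathfrak{p}} \otimes [s_0^*] \otimes y = ([[\wedge_{-1} E_c]]_\mathfrak{p})^{-1} \otimes [[\wedge_{-1} E_c]]_\mathfrak{p} \otimes y = y, \]
closing the argument.

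The main obstacle I anticipate is establishing the Koszul--Thom identity when $F$ is only a Thom--Mather Witt pseudomanifold rather than a smooth manifold. Over a smooth base this identity is classical (going back to Atiyah--Segal \cite{ASII} and used in Rosenberg's treatment \cite{G-Signature}). In the present singular setting I plan to reduce to the smooth case locally: $p: E_c \to F$ admits local trivialisations $p^{-1}(V_\alpha) \cong V_\alpha \times \cplx^s$ of Thom--Mather vector bundles, under which all three classes $\tau_F$, $[s_0^*]$ and $[[\wedge_{-1} E_c]]$ restrict to the pull-backs of the corresponding classes on $\{\mathrm{pt}\}\times\cplx^s$ via the second projection. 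The identity on a point is the standard computation in the equivariant K-theory of the $G$-representation $\cplx^s$, and a Mayer--Vietoris/partition-of-unity gluing over a finite cover $\{V_\alpha\}$, exploiting $R(G)$-linearity and naturality of the Kasparov product together with exactness of localisation at $\mathfrak{p}$, should lift the identity to all of $F$.
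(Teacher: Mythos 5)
Your proof follows essentially the same route as the paper: reduce to $\gamma\circ i_{[X^G]}=\operatorname{id}$, use the disjointness of the tubular neighbourhoods to split the statement componentwise, identify $\alpha_F\circ i_{[F]}$ as the Kasparov product with the zero-section restriction module (the paper writes it as $[C(F),\,C(U_F)\to C(F),\,0]$), and then appeal to the Koszul--Thom identity $\tau_F\otimes[s_0^*]=[[\wedge_{-1}E_c]]$ to cancel the $([[\wedge_{-1}E_c]]_\mathfrak{p})^{-1}$ in the definition of $\gamma_F$. The paper compresses the last step into ``one computes easily''; you are supplying exactly the computation it has in mind, and your explicit treatment of the off-diagonal vanishing $\alpha_{F'}\circ i_{[F]}=0$ for $F\ne F'$ is a detail the paper leaves implicit. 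One small remark: your anticipated obstacle---that $F$ is only a Thom--Mather Witt pseudomanifold rather than a smooth manifold---is not in fact an obstacle. The equivariant Thom isomorphism and the self-intersection identity $\tau_F\otimes[s_0^*]=[[\wedge_{-1}E_c]]$ hold for a $G$-equivariant complex vector bundle over \emph{any} compact $G$-space (this is already in Atiyah--Segal and is entirely $C^*$-algebraic); the Koszul-complex representative of $\tau_F$ makes no use of smoothness of the base. So the Mayer--Vietoris reduction to local trivialisations you sketch, while sound, is not needed and you can invoke the identity directly for the compact $G$-space $F$.
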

 \begin{proof} We are going to show that $\gamma \circ i_{[X^G]} = Id $ on $K_0^G (X^G)_\mathfrak{p}$, since we know by Proposition \ref{prop:lo} that $i_{[X^G]}$ is an isomorphism, this will prove the Proposition.
 It suffices to check that,  for every connected component of $X^G$ :
 $$
 \gamma_F \circ \alpha_F \circ i_{[F] }= Id\;\; \rm {on}\, \,K_0^G (F)_\mathfrak{p} \,.
 $$ So we have to prove that:
 $$
 [[ \wedge_{-1} E_c]]^{-1}_\mathfrak{p} \otimes \tau_{F, \mathfrak{p}} \circ \alpha_F \circ i_{[F]} = Id \,.
 $$ Now observe that $\alpha_F \circ i_{[F]} $ is the Kasparov product by the Kasparov module (put on the left)
 $[C(F) , C(U_F) \rightarrow C(F), 0]$. Then,  using the associativity of the Kasparov product, one computes easily that:
 $$
 [[ \wedge_{-1} E_c]]^{-1}_\mathfrak{p} \otimes \tau_{F, \mathfrak{p}} \otimes ( [C(F) , C(U_F) \rightarrow C(F), 0] \otimes \cdot) = Id \;\; \rm {on}\, \,K_0^G (F)_\mathfrak{p} \,.
 $$ This proves the Proposition.
 \end{proof}

 Lastly, for $X$ and for each $F \in \mathcal{C}$, consider the projection maps  $\pi_X: X \rightarrow \{Pt\}$ and $\pi_F: F \rightarrow \{Pt\}$
 with $Pt$ a point. Recall that the map 
$(\pi_X)_*:K_0^G (X) \rightarrow  K_0^G (\{Pt\})$ is given by the Kasparov product 
$x \mapsto [C(X), j, 0] \otimes x$ where $[C(X), j, 0] \in KK_0^G( \bbC , C(X))$ is the Kasparov module associated 
to the scalar multiplication map $j: \bbC \rightarrow C(X)$

 \begin{lemma} \label{lem:diag}
 One has  the following commutative diagram: 

 \[ \xymatrix@C=70pt{
     K_0^G (X)_\mathfrak{p}   \ar[r]^{(\pi_X)_*}   \ar[d]^{\gamma}& K_0^G (\{Pt\})_\mathfrak{p} = R(G)_\mathfrak{p} \ar[d]^{Id} \\
     K_0^G (X^G)_\mathfrak{p}  \ar[r]^{\oplus (\pi_F)_* }& K_0^G (\{Pt\})_\mathfrak{p} = R(G)_\mathfrak{p} \,. }\]
     
\end{lemma}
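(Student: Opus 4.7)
The plan is to reduce the commutativity to the combination of two facts: the identification of $\gamma$ as the inverse of the localization isomorphism $i_{[X^G]}$, and the functoriality of the K-homology pushforward applied to $F \hookrightarrow X \to \{Pt\}$.

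First, by Proposition \ref{lem:locc}, the map $\gamma: K_0^G(X)_\mathfrak{p} \to K_0^G(X^G)_\mathfrak{p}$ is the two-sided inverse of the localized inclusion map $i_{[X^G]}: K_0^G(X^G)_\mathfrak{p} \to K_0^G(X)_\mathfrak{p}$. Therefore, for any $x \in K_0^G(X)_\mathfrak{p}$, one has $x = i_{[X^G]}(\gamma(x))$. Under the splitting $X^G = \bigsqcup_{F \in \mathcal{C}} F,$ the map $i_{[X^G]}$ decomposes as $i_{[X^G]} = \oplus_{F} (i_F)_*$, where $i_F : F \hookrightarrow X$ is the inclusion of a connected component and $(i_F)_*$ is the Kasparov product with the bimodule $[C(F), F_F, 0] \in KK_0^G(C(X), C(F))$ associated to the restriction homomorphism $C(X) \to C(F)$.

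Second, for each $F \in \mathcal{C}$, the composition $\pi_X \circ i_F$ is exactly $\pi_F : F \to \{Pt\}$ as a continuous $G$-equivariant map. At the level of $C^*$-algebras, the scalar inclusion $j: \bbC \to C(X)$ followed by restriction $C(X) \to C(F)$ is the scalar inclusion $j_F : \bbC \to C(F)$. Equivalently, the Kasparov product $[C(X), j, 0] \otimes [C(F), F_F, 0] \in KK_0^G(\bbC, C(F))$ coincides with $[C(F), j_F, 0]$. By associativity of the Kasparov product this gives the functoriality identity $(\pi_X)_* \circ (i_F)_* = (\pi_F)_*$ on $K_0^G(F)_\mathfrak{p}$ for each $F$.

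Combining the two points, we get
\[
(\pi_X)_* \circ i_{[X^G]} \;=\; \bigoplus_{F \in \mathcal{C}} (\pi_X)_* \circ (i_F)_* \;=\; \bigoplus_{F \in \mathcal{C}} (\pi_F)_*,
\]
and applying this identity to $\gamma(x) \in K_0^G(X^G)_\mathfrak{p}$ yields
\[
(\pi_X)_*(x) \;=\; (\pi_X)_* \bigl( i_{[X^G]}(\gamma(x)) \bigr) \;=\; \Bigl( \bigoplus_{F \in \mathcal{C}} (\pi_F)_* \Bigr) \bigl( \gamma(x) \bigr),
\]
which is exactly the desired commutativity. The only mild point to check carefully is the identity $[C(X), j, 0] \otimes [C(F), F_F, 0] = [C(F), j_F, 0]$ in $KK^G_0(\bbC, C(F))$, but this is immediate from the fact that $F_F \circ j = j_F$ as $*$-homomorphisms and from the definition of the exterior/interior Kasparov product on Kasparov modules with zero operator component.
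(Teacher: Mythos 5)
Your proof is correct and follows essentially the same route as the paper: both establish the unlocalized identity $(\pi_X)_* \circ i_{[X^G]} = \oplus_F (\pi_F)_*$ by associativity of the Kasparov product, and then compose with $\gamma = i_{[X^G]}^{-1}$ from Proposition \ref{lem:locc}. You simply spell out the Kasparov-product computation that the paper leaves implicit (namely that $[C(X), j, 0] \otimes [C(F), F_F, 0] = [C(F), j_F, 0]$, reflecting $\pi_X \circ i_F = \pi_F$), which is a welcome expansion but not a different argument.
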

\begin{proof} By associativity of the Kasparov product,  one checks easily the  following identity:
$$
(\pi_X)_* \circ i_{[X^G]} = \oplus (\pi_F)_*\,,
$$ between maps from $K_0^G (X^G)$ to $ K_0^G (\{Pt\}) = R(G)$. Now, we localize all these modules at $\mathfrak{p}$ and 
we apply on the right handside of this identity $i_{[X^G]}^{-1}= \gamma$ (see Prop \ref{lem:locc}). We then obtain immediately the Lemma.
\end{proof}

\medskip
Recall Proposition \ref{prop:K-hom}  and \eqref{eq:L} for  the various equivalent descriptions of ${\rm Sign}\,( g, X)$.
Applying Lemma \ref{lem:diag} to $[D_X]_\mathfrak{p}  \in K_0^G (X)_\mathfrak{p}$ we obtain:
\begin{equation} \label{eq:sign}
{\rm Sign}\,( g, X) = \theta_g \bigl( \pi_* ([D_X]_\mathfrak{p}  ) \bigr) = \sum_{F \in \mathcal{C}} \theta_g \bigl(  (\pi_F)_* \circ \gamma_F \circ \alpha_F ([D_X]_\mathfrak{p} ) \bigr)
\end{equation}
This equality motivates the following:
\begin{definition}\label{def:contribution}
We call $\theta_g \bigl(  (\pi_F)_*\circ \gamma_F \circ \alpha_F  ([D_X]_\mathfrak{p} ) \bigr)$ the contribution of $F$ to ${\rm Sign}\,( g, X)$.
\end{definition}

\bigskip
Now, one defines a Chern character $\Ch_g : K_0^G(F)_\mathfrak{p} = K_0(F) \otimes R(G)_\mathfrak{p} \rightarrow 
H_{even}(F , \bbC)$ by the formula:
$$
\Ch_g ( x \otimes \chi/\psi) = \theta_g (\chi/\psi) \, \Ch\, x = \chi(g)/\psi(g) \, \Ch\, x\,.
$$ Therefore,   
we get a Chern character 
\begin{equation}\label{first-chern-character}
\Ch_g :  K_0^G(X^G)_\mathfrak{p} \rightarrow H_{even} ( X^G , \bbC).
\end{equation}
Next, consider the projection to a point 
$\pi: X \rightarrow \{A\}$. Then using Lemma \ref{lem:diag} and the functoriality property of the Chern character in K-homology, one obtains the following commutative diagram: 
\[ \xymatrix@C=70pt{ K_0^G(X)_\mathfrak{p} \ar[d]^{\Ch_g \circ \gamma} \ar[r]^{\pi_*}&  \ar[d]^{\theta_g} K_0^G(\{A\})_\mathfrak{p}= R(G)_\mathfrak{p}  \\
H_{even} ( X^G , \bbC) \ar[r]^{\pi_*} & H_0(\{A\} , \bbC) = \bbC \,.
}\]
We deduce that:
\begin{lemma} \label{lem:sign}
${\rm Sign}\,( g, X) = \theta_g \pi_* ([D_X]_\mathfrak{p})$ coincides with the projection onto the zero degree part of 
the homology class $\Ch_g (\gamma ([D_X]_\mathfrak{p}) = \sum_{F \in \mathcal{C}} \Ch_g ( \gamma_F \circ \alpha_F ([D_X]_\mathfrak{p})$.
\end{lemma}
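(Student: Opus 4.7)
The plan is to chain together the commutative diagram displayed immediately before the statement with the definition of $\gamma$ from \eqref{eq:gamma}. First I would invoke \eqref{equality-sign-ind-bis} (equivalently, Proposition \ref{prop:K-hom}) to identify ${\rm Sign}(g,X)$ with $\theta_g \pi_*([D_X]_\mathfrak{p})$; that is, we regard the equivariant index as an element of $R(G)$, localize at $\mathfrak{p}$, and evaluate its character at $g$ via $\theta_g$.

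Next I would read off the commutative diagram just above the statement (which encodes Lemma \ref{lem:diag} combined with the naturality of the Chern character in K-homology) to obtain
$$ \theta_g \pi_*([D_X]_\mathfrak{p}) \,=\, \pi_* \Ch_g ( \gamma([D_X]_\mathfrak{p})), $$
where the right-hand $\pi_*$ is the pushforward $H_{\mathrm{even}}(X^G,\bbC) \to H_0(\{A\},\bbC)=\bbC$.

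The third step is the observation that pushforward to a point in rational (or complex) homology annihilates classes of positive degree and restricts to the canonical identification $H_0(\{A\},\bbC)=\bbC$ on the degree zero summand. Consequently, $\pi_* \Ch_g(\gamma([D_X]_\mathfrak{p}))$ is exactly the projection onto the degree zero part of the even-degree homology class $\Ch_g(\gamma([D_X]_\mathfrak{p}))$, viewed as a scalar.

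Finally I would unpack the right-hand side using the definition \eqref{eq:gamma} of $\gamma$, namely
$$ \gamma([D_X]_\mathfrak{p}) \,=\, \bigoplus_{F \in \mathcal{C}} (\gamma_F \circ \alpha_F)([D_X]_\mathfrak{p}) $$
inside $K_0^G(X^G)_\mathfrak{p} = \bigoplus_{F\in\mathcal{C}} K_0^G(F)_\mathfrak{p}$. Since $\Ch_g$ is additive on this direct sum decomposition, we get
$$ \Ch_g(\gamma([D_X]_\mathfrak{p})) \,=\, \sum_{F\in\mathcal{C}} \Ch_g((\gamma_F \circ \alpha_F)([D_X]_\mathfrak{p})), $$
which together with the previous step finishes the proof. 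There is no genuine obstacle here: the statement is a purely formal consequence of the commutative diagram above, the definition of $\gamma$, and the standard fact that pushforward to a point on homology selects the degree zero component; all of the analytic work has already been absorbed into Proposition \ref{prop:K-hom} and Lemma \ref{lem:diag}.
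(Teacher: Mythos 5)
Your proposal is correct and takes essentially the same approach as the paper: the paper simply states ``We deduce that'' after displaying the commutative diagram, and your proof spells out the deduction. You correctly identify the two key ingredients — the identification ${\rm Sign}(g,X)=\theta_g\pi_*([D_X]_\mathfrak{p})$ from Proposition \ref{prop:K-hom}, and the commutativity of the square (which combines Lemma \ref{lem:diag} with functoriality of the Chern character under $\pi_*$) — and correctly observe that the homological pushforward to a point extracts the degree-zero component, while the sum over $F\in\mathcal{C}$ is just the definition \eqref{eq:gamma} of $\gamma$ together with additivity of $\Ch_g$ over the direct-sum decomposition $K_0^G(X^G)_\mathfrak{p}=\bigoplus_F K_0^G(F)_\mathfrak{p}$.
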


\bigskip
 In the next Proposition, we provide, following \cite{G-Signature}, a formula which allows to compute the localized class $\gamma_F \circ \alpha_F ([D_X]_\mathfrak{p})$.
 We use the notations of Proposition 3.8 of \cite{G-Signature}. Recall that $U_F$ denotes a $G-$equivariant tubular neighbourhood  of $F$ in $X$ and there exists a stratified diffeomorphism
 $\phi: U_F\to E_F$, with $E_F$ the $G-$equivariant normal bundle of $F$ in $X$. Consider as before a (topological) generator $g$ of $G$ (=$H$).\\
  Since $G$ is compact, we may write $E_F$ as a direct sum of oriented even dimensional subbundles $E_F(-1)$ and $E_F( e^{i \theta_j})$, where $0 < \theta_j < \pi$, $1\leq j \leq k$. 
  Moreover, as explained in \cite{ASIII}, the bundles $E_F( e^{i \theta_j})$ have a natural
  $G$-invariant complex structure.
  Here $g$ acts as $-{\rm Id}$ on $E_F(-1)$ and acts as $e^{i \theta_j}\, {\rm Id}$ on $E_F( e^{i \theta_j})$ endowed with its $G-$invariant complex structure $E_{F,c}( e^{i \theta_j})$.\footnote{Recall (\cite{ASII}, \cite{G-Signature}) that $1$ is not an eigenvalue 
 of $g$ because $F$ is a component of $X^g=X^G$}

\begin{remark}We shall assume that each $E_F (-1)$ admits a $G$-invariant complex structure.\footnote{This is in fact equivalent to assuming that 
$E_F$ admits a $G$-invariant complex structure.}
More generally, we  could
assume that $E_F (-1)$ admits a $G-$invariant  spin$^c-$structure with the graded spinor bundle $S(E_F (-1) ) =  S^+(E_F (-1) ) \oplus S^-(E_F (-1) )$; then we would still be able to make use of the
Thom isomorphism, an essential tool in our arguments, and our results  will  still be true.
\end{remark}

 In the sequel we shall often set $E=E_F$ so as to lighten the notation.
 
   We shall also write
 $E(e^{i\pi})$ for $E(-1)$, as this will be useful in the writing of certain formulas. If we want to stress
 the (assumed) complex structure on $E(-1)$ we write $E_c (-1)$ or $E_c (e^{i\pi})$ .
 
 \begin{proposition} \label{prop:E} 
 Consider $[D_F]\in KK_*(F ; \bbC)$. It induces an element $[D_F]_\mathfrak{p}\in 
 KK_*(F ; \bbC) \otimes R(G)_\mathfrak{p}$.
 One has:
 $$
\gamma_F\circ \alpha_F  [D_X] _\mathfrak{p}\, =\, [[ \mathcal{E}]] \otimes [D_F]_\mathfrak{p}\; {\rm in}\, KK_*^G(F ; \bbC)_\mathfrak{p}\,=\, KK_*(F ; \bbC) \otimes R(G)_\mathfrak{p}\,,
 $$ with the class $[[ \mathcal{E}]] \in KK_*^G(F ; F)_\mathfrak{p}$  given by the product
$$
 [[ \mathcal{E}( e^{i \theta_1})]] \otimes \ldots \otimes [[ \mathcal{E}(e^{i \theta_k})]] \otimes [[ \mathcal{E}(e^{i \pi})]]\,,
 $$ where the following classes are  given by  "quotient" virtual bundles which have meaning in the localized $K-$theory group 
 $K^*_G( F, F)_\mathfrak{p} = K^*(F, F) \otimes R(G)_\mathfrak{p}$:
 \begin{equation}\label{E-j}
 [[ \mathcal{E}( e^{i \theta_j})]] = \frac{ [ [\bigwedge E_c( e^{i \theta_j})]] }{[[ \bigwedge^{even} E_c( e^{i \theta_j})]] - [[ \bigwedge^{odd} E_c( e^{i \theta_j})]]}\,,
 \end{equation}
 with $\theta_0=\pi$.
  \end{proposition}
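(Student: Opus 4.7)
The strategy is to unpack the definition of $\gamma_F \circ \alpha_F$ and reduce the assertion to a single $KK$-theoretic identity on the normal bundle $E_F$. By Proposition~\ref{prop:alpha}, $\alpha_F([D_X]) = \Sigma(p) \otimes [D_F]$, and by Definition~\ref{def:gamma-hom} this gives
$$\gamma_F \circ \alpha_F([D_X]_\mathfrak{p}) \;=\; [[\wedge_{-1} E_c]]_\mathfrak{p}^{-1} \otimes \tau_{F,\mathfrak{p}} \otimes \Sigma(p) \otimes [D_F]_\mathfrak{p}$$
in $KK_*^G(C(F),\mathbb{C})_\mathfrak{p}$. Because $G$ acts trivially on $F$, we have a canonical splitting $KK_*^G(C(F), C(F))_\mathfrak{p} \cong KK_*(C(F), C(F)) \otimes R(G)_\mathfrak{p}$, which will make it straightforward to assemble the $R(G)$-factors at the end.

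The heart of the proof is the key identity
$$\tau_F \otimes_{C_0(E_F)} \Sigma(p) \;=\; [[\wedge E_c]] \quad \text{in } KK_*^G(C(F), C(F)),$$
where $[[\wedge E_c]]$ is the bimodule class associated with the total complex exterior algebra of the equivariant normal bundle $E_c$. I would establish this fiberwise, in the spirit of Proposition~3.8 of \cite{G-Signature}. At each $x \in F$, the fiber of $\tau_F$ is represented by the Dolbeault complex $(\wedge^{0,*}(E_c)_x, \bar\partial + \bar\partial^*)$, while the fiber of $\Sigma(p)$ is the signature operator on the real vector space underlying $(E_c)_x$. Computing their unbounded Kasparov product uses the equivariant Kucherovsky criterion invoked in the proof of Theorem~\ref{gysin} (available per \cite{Fo}, \cite{Fo-Re}) together with the classical Clifford-algebraic identification
$$\wedge^*_{\mathbb{R}}(E_c)_x \otimes_{\mathbb{R}} \mathbb{C} \;\cong\; \wedge^*(E_c)_x \otimes_\mathbb{C} \wedge^* \overline{(E_c)_x},$$
under which the signature module on a complex vector space is recognized as a tensor product of two copies of $\wedge^* V$.

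To assemble, write $E = \bigoplus_{j=0}^k E(e^{i\theta_j})$ with $\theta_0 = \pi$, each summand being a $G$-equivariant complex bundle. Exterior algebra is multiplicative over direct sums, so
$$[[\wedge E_c]] = \prod_{j=0}^k [[\wedge E_c(e^{i\theta_j})]], \qquad [[\wedge_{-1} E_c]] = \prod_{j=0}^k [[\wedge_{-1} E_c(e^{i\theta_j})]],$$
with all factors commuting in $KK^G(C(F), C(F))$ since they arise as diagonal pull-backs of virtual bundles. Substituting into the formula of the first paragraph and rearranging yields
$$\gamma_F \circ \alpha_F([D_X]_\mathfrak{p}) \;=\; \Bigl(\prod_{j=0}^k \frac{[[\wedge E_c(e^{i\theta_j})]]}{[[\wedge_{-1} E_c(e^{i\theta_j})]]_\mathfrak{p}}\Bigr) \otimes [D_F]_\mathfrak{p} \;=\; [[\mathcal{E}]] \otimes [D_F]_\mathfrak{p},$$
as required.

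The main obstacle is the key identity in the second paragraph: although it is essentially classical at the level of virtual bundles over a single complex fiber, its $KK$-theoretic execution requires a careful $G$-equivariant unbounded Kasparov product between the Dolbeault representative of $\tau_F$ and the fiberwise signature operator $\Sigma(p)$ over the possibly singular base $F$. Verifying the Kucherovsky connection and positivity conditions with Thom–Mather control data in an equivariant setting parallels, and refines, the arguments of \cite{ABP25}; a secondary care is needed to ensure that the inversion of $[[\wedge_{-1} E_c]]_\mathfrak{p}$ genuinely lives in the localized bivariant ring, but this is exactly what the preceding invertibility statement supplies.
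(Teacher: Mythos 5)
Your proposal is essentially the same proof the paper gestures at, and it correctly carries out the strategy: the paper's one-line proof is simply a citation of Rosenberg's Proposition~3.8 together with Theorem~\ref{gysin}, and what you do is unpack precisely that. Your unravelling of $\gamma_F\circ\alpha_F$ via Proposition~\ref{prop:alpha} and Definition~\ref{def:gamma-hom}, the reduction to the bivariant identity $\tau_F\otimes_{C_0(E_F)}\Sigma(p)=[[\wedge E_c]]$, and the multiplicativity-over-eigenbundle-summands bookkeeping at the end are the right ingredients, and they match the structure of Rosenberg's argument in the smooth case with the Gysin class $\Sigma(p)$ substituted in the singular setting.

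One small correction: at each $x\in F$ the class $\tau_F$ is the dual Dirac (Bott/Thom) element in $KK^G(C(F),C_0(E_F))$, and its representative is the Clifford multiplication by position on $\wedge^{0,*}(E_c)_x$, whereas the Dolbeault Dirac operator $\bar\partial+\bar\partial^*$ on $\wedge^{0,*}(E_c)_x$ represents the inverse element $\alpha\in KK^G(C_0(E_F),C(F))$. The algebra you use — $\Sigma(p)=[[\wedge E_c]]\otimes\alpha$, hence $\tau_F\otimes\Sigma(p)=[[\wedge E_c]]\otimes(\tau_F\otimes\alpha)=[[\wedge E_c]]$ — is the clean way to state the key identity; phrasing it as a Kucherovsky-type product of $\tau_F$ with $\Sigma(p)$ is fine, but one should keep track of which way the Kasparov modules go. This is a presentational point, not a gap; your overall route is correct.
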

     \begin{proof}
This follows from the proof of Proposition 3.8 in
 \cite{G-Signature} once we use Theorem \ref{gysin}.
 \end{proof}

 \begin{remark}
 We could assume only that $E(-1)$ is spin$_c$ and then
 $$  [[ \mathcal{E}( -1)]] = \frac{ [[ S (E( -1) )] ]}{[ [S^+( E( -1)) ]] - [ [S^{-} ( E( -1) )]]}\,.$$
\end{remark}

\medskip
Now we give the final arguments in order to prove an extension of 
the formula of Atiyah-Singer (\cite{ASIII} Theorem 6.12) for 
${\rm Sign}\; (g, X)$, $g\in G$,  when $X$ is a Witt $G$-pseudomanifold whose fixed point sets are normally non singularly included  in $X$. 
The two previous propositions, Proposition \ref{prop:alpha} and Proposition \ref{prop:E}, show that
$\gamma_F \circ \alpha_F ( [D_X]_\mathfrak{p})\in KK_*(F ; \bbC) \otimes R(G)_\mathfrak{p} $
is the Kasparov product $[[\mathcal{E}]] \otimes [D_F]_\mathfrak{p}$ of the class $[[\mathcal{E}]] \in KK_*^G(F , F)_\mathfrak{p}$
and of the class $[D_F]_\mathfrak{p} \in KK_*(F, pt)_\mathfrak{p} $.

We now bring into the picture the Chern character in bivariant KK-theory, employing in particular 
  Theorems 5.18 and 8.6 of Puschnigg \cite{Puschnigg}. 
We know, $X$ being $G-$equivariantly orientable,  that dim $F$ is even.
Puschnigg has constructed  a bivariant Chern character:
$$
\bCh: KK_0(F , F) \rightarrow HC^{even}_{loc} (C(F) , C(F) )= Hom^{even} ( \oplus_{n \in \Z} H^{2 n} (F , \C) ;  \oplus_{n \in \Z} H^{2 n} (F , \C) )\,.
$$ 
where the subscript $lc$ means {\it local cyclic cohomology. Recall that}
$ KK_0^G(F , F) \simeq KK_0(F , F) \otimes R(G) \,.
$ 
Then one has the following Chern Character:

\begin{equation}  \label{eq:Ch_g}
\begin{aligned}
\bCh_g: KK_0^G(F , F)_\mathfrak{p} \rightarrow HC^{even}_{loc} (C(F) , C(F) ) \\
u \otimes \chi / \psi  \mapsto \bCh_g (  u \otimes \chi / \psi) \,=\, \frac{ \chi ( g)}{ \psi (g)} \, \bCh ( u )\,.
\end{aligned}
\end{equation}

\noindent
{\bf Notation.} Notice that we have used the bold face notation for the Chern character in (localized) bivariant KK-theory.

\begin{lemma} Let  $\mathcal{E}$ be the cup product of the "quotient" bundles $\mathcal{E}(e^{i \theta_j})$ and  $\mathcal{E}(-1)$ which are defined 
in Proposition \ref{prop:E}. 
Then $\bCh_g \, [[ \mathcal{E}]] $
 is  the endomorphim on even cohomology $H^{even} (F)$ given by:
 $$ v \mapsto v \wedge \Ch_g\, \mathcal{E}\,. $$ 
\end{lemma}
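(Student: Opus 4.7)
The plan is to compute $\bCh_g[[\mathcal{E}]]$ by decomposing the bivariant Chern character through a factorization of $[[\mathcal{E}]]$ into two simpler Kasparov classes and then invoking the multiplicativity of Puschnigg's bivariant Chern character.

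First, I would recall from the discussion preceding Proposition \ref{prop:E} the explicit construction of $[[V]]$ for a (possibly virtual) $G$-equivariant complex vector bundle $V \to F$: one has $[V] \in KK_0^G(\bbC, C(F))$ given by the module of continuous sections, and
\[ [[V]] \,=\, [V] \otimes_{C(F)} \Delta_F \quad \text{in } KK_0^G(C(F), C(F)), \]
where $\Delta_F \in KK_0^G(C(F) \otimes C(F), C(F))$ is the diagonal class. Since the bundles $\mathcal{E}(e^{i\theta_j})$ of Proposition \ref{prop:E} make sense only in the localized $K$-theory, this factorization must be read in the localized groups, but this causes no difficulty because localization at $\mathfrak{p}$ is an exact functor that preserves the intersection product.

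Next, I would apply the compatibility of Puschnigg's bivariant Chern character with the Kasparov product (\cite{Puschnigg}, Theorem 8.6) to obtain
\[ \bCh([[\mathcal{E}]]) \,=\, \bCh([\mathcal{E}]) \circ \bCh(\Delta_F) \quad \text{in } HC^{even}_{loc}(C(F), C(F)). \]
The left factor is identified by Puschnigg's Theorem 5.18: for a complex vector bundle (or virtual bundle, after extending by linearity) $V \to F$, the bivariant Chern character of $[V] \in KK(\bbC, C(F))$, viewed as a cyclic cohomology class, is the ordinary Chern character $\Ch(V) \in H^{even}(F, \bbC)$ under the HKR-type identification $HC^{even}_{loc}(\bbC, C(F)) \cong H^{even}(F, \bbC)$. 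The right factor is identified via the fact that $\Delta_F$ is represented by the multiplication $*$-homomorphism $C(F) \otimes C(F) \to C(F)$; under Puschnigg's identification of cyclic cohomology with de Rham cohomology on the smooth locus, a $*$-homomorphism goes to the induced map on cohomology, and the multiplication map in $C(F)$ induces cup product. Combining these two identifications gives that $\bCh([[\mathcal{E}]])$ is the endomorphism $v \mapsto v \wedge \Ch(\mathcal{E})$ of $H^{even}(F, \bbC)$.

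Finally, I would pass from $\bCh$ to $\bCh_g$: using the tensor product decomposition $KK_0^G(F,F)_\mathfrak{p} \cong KK_0(F,F) \otimes R(G)_\mathfrak{p}$ and the definition (\ref{eq:Ch_g}), writing $[[\mathcal{E}]] = \sum_j u_j \otimes \chi_j/\psi_j$ yields
\[ \bCh_g([[\mathcal{E}]]) \,=\, \sum_j \frac{\chi_j(g)}{\psi_j(g)}\, \bCh(u_j), \]
and the right-hand side acts as $v \mapsto v \wedge \Ch_g(\mathcal{E})$ by the non-equivariant computation above combined with the very definition of $\Ch_g$ in (\ref{first-chern-character}). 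The main obstacle is the second identification above, namely verifying that under Puschnigg's local cyclic cohomology formalism the diagonal class $\Delta_F$ really corresponds to cup product; this requires tracing through the HKR-type isomorphism and checking that the induced map of a $*$-homomorphism agrees with the pullback on cohomology, a non-trivial but standard check in Puschnigg's framework. Once this is in hand the rest is a bookkeeping exercise in the localized ring $R(G)_\mathfrak{p}$.
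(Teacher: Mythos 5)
Your route diverges from the paper's at the key step. The paper does not try to identify $\bCh(\Delta_F)$ as a cup-product class in local cyclic cohomology. Instead, after reducing (exactly as you do at the end) to the non-localized, non-equivariant case of a genuine $\mathbb{Z}_2$-graded bundle $A$ with $[[A]]\in KK_0(F,F)$, the paper evaluates $\bCh[[A]]$ on test elements: for any complex vector bundle $E_1$ on $F$ one has $\bCh([[A]])(\Ch E_1)=\Ch(E_1\otimes[[A]])$ by compatibility of the bivariant Chern character with the Kasparov product, and $E_1\otimes[[A]]$ is just the class of $E_1\otimes A$, so this equals $\Ch(E_1)\wedge\Ch A$. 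Since the classes $\Ch(E_1)$ generate $H^{even}(F;\mathbb{C})$, this pins down the endomorphism. This is more elementary and sidesteps entirely the HKR-type identification you flag as a ``non-trivial but standard check.''

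Beyond being a different route, your version has a gap that needs to be addressed before it is a proof. The formula $\bCh([[\mathcal{E}]])=\bCh([\mathcal{E}])\circ\bCh(\Delta_F)$ does not type-check as a literal composition: $\bCh([\mathcal{E}])$ lives in $HC^{even}_{loc}(\mathbb{C},C(F))$ and $\bCh(\Delta_F)$ in $HC^{even}_{loc}(C(F)\otimes C(F),C(F))$, and the relevant Kasparov product is the general one $KK(\mathbb{C},C(F))\times KK(C(F)\otimes C(F),C(F))\to KK(C(F),C(F))$, which involves tensoring $[\mathcal{E}]$ up by $\sigma_{C(F)}$ before composing. So you would first have to invoke an external-product compatibility for Puschnigg's Chern character (to handle $\sigma_{C(F)}$), then composition compatibility, and then identify $\bCh(\Delta_F)$ as the cup product under a K\"unneth isomorphism for local cyclic (co)homology. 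Each of these can presumably be done, but none of it is written out, and the $\Delta_F$ identification you concede is the real content. The paper's test-object argument achieves the same conclusion with no need to analyze $\Delta_F$ cohomologically, which is why it is the one to prefer here.
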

\begin{proof} Recall that $\mathcal{E}$ is a quotient bundle in $KK_0^G( \C , F)_\frak{p}$. 
By definition of the localized module 
\[ (KK_0^G( \C , F))_\frak{p}= (R(G))_\frak{p} \otimes KK_0( \C , F), \]
there exists a $\Z_2-$graded (difference) complex vector bundle $A$ over $F$ and $\omega_1, \omega_2 \in R(G)\setminus \frak{p}$ 
such that:
 $$\omega_1 \otimes \omega_2\otimes \mathcal{E} = \omega_1 \otimes A \; \rm{in}\;  KK_0( \C , F)\,.$$
  Denote by $[[A]] \in KK_0(F , F)$ the bivariant class defined by the Kasparov module $( C(F, A) , \pi, 0)$ where $\pi: C(F) \rightarrow C(F, A)$ is the scalar multiplication map along the fibers. Since $\omega_j \in R(G)\setminus \frak{p}$,  we have 
 $\chi_{\omega_j} (g) \not=0$ 
 where $\chi_{\omega_j}$ denotes the character of the virtual representation $\omega_j$ for $j=1,2$ .

Then, in view of \eqref{eq:Ch_g} it suffices, in order to prove the Lemma, to show that $\bCh [[A]]$ is the endomorphism on 
$H^{even} (F)$ given by:
$$ v \mapsto v \wedge \Ch\, A\,. $$
Consider then any complex vector bundle $E_1 \rightarrow F$ on $F$. 
It defines a Kasparov  module 
 \[  [E_1] \in KK_0( \{pt\}, F)  \] 
and the Kasparov product with $[[A]]$ allows to define an endomorphism of $KK_0( \{pt\}, F)$ by the formula:
$$
E_1 \mapsto E_1 \otimes [[ A]] \,.
$$

Then by the properties of the bivariant Chern character (see eg \cite{Puschnigg}, \cite{Cuntz}) we obtain that 
\begin{equation}\label{chern-pu}
\bCh ( [[ A]]) ( \Ch (E_1))\,=\,  \Ch ( E_1 \otimes [[ A]] )  \,.
\end{equation} But the $K-$theory class (of the Kasparov product) $E_1 \otimes [[ A]]$ is defined by the $\mathbb{Z}_2-$graded vector bundle 
$E_1 \otimes A $ on $F$ so that:
$$
\Ch ( E_1 \otimes [[ A]] ) = \Ch ( E_1 \otimes  A ) = \Ch (E_1) \wedge \Ch A \,.
$$
This proves the Lemma since all the $\Ch (E_1)$ generate $H^{even}(F , \C)$ as a $\C-$vector space.
\end{proof}

\noindent
We now go back to  \eqref{E-j}. 
By setting $\theta_0=\pi$,
we can again write $E_c (-1)$ as $E_c (e^{i\theta_0})$. Consider then 
$$E_c( e^{i \theta_1}), \dots, E_c( e^{i \theta_k}), E_c (e^{i\theta_0}).$$
Recall that $
 [\bigwedge^{even} E_c( e^{i \theta_j})] -  [\bigwedge^{odd} E_c( e^{i \theta_j})]$
 are invertible elements in $KK_*^G(\{pt\},F)_\mathfrak{p}$;
  thus 
  $$
Ch_g \, \frac{ [ \bigwedge E_c( e^{i \theta_j})] }{[ \bigwedge^{even} E_c( e^{i \theta_j})] - [ \bigwedge^{odd} E_c( e^{i \theta_j})]}$$
makes sense.
Moreover 
  $
 \Ch_g\, [\bigwedge^{even} E_c( e^{i \theta_j})] - \Ch_g\, [\bigwedge^{odd} E_c( e^{i \theta_j})]
 $ are also invertibles so that 
 $$\frac{ Ch_g\,  \bigwedge E_c( e^{i \theta_j}) }{Ch_g\, \bigwedge^{even} E_c( e^{i \theta_j}) - Ch_g\, \bigwedge^{odd} E_c( e^{i \theta_j})}
$$
also
 makes sense.
 Since $\Ch_g$ transforms 
 the tensor product  of two vector bundles  into  the wedge product of the respective Chern characters, we obtain
$$
Ch_g \, \frac{ [ \bigwedge E_c( e^{i \theta_j})] }{[ \bigwedge^{even} E_c( e^{i \theta_j})] - [ \bigwedge^{odd} E_c( e^{i \theta_j})]} =
\frac{ Ch_g\,  \bigwedge E_c( e^{i \theta_j}) }{Ch_g\, \bigwedge^{even} E_c( e^{i \theta_j}) - Ch_g\, \bigwedge^{odd} E_c( e^{i \theta_j})}
\,.$$
Similar remarks apply to 
$$\frac{ [ S (E( -1) ) ]}{[ S^+( E( -1)) ] - [ S^{-} ( E( -1) )]}$$
if we only assume that $E(-1)$ is spin$_c$.\\
We also have the Chern character in $K-$homology:
$$
\Ch: KK_0 ( F, pt) \rightarrow 
\bigoplus_{n \in \Z} H_{2n} ( F; \C) \,.
$$ 
where the above direct sum is finite. 
Recall that homological L-classes $L_j (X) \in H_j (X;\mathbb{Q})$ for oriented compact
Witt pseudomanifolds were defined by Goresky-MacPherson in \cite{IH1} and by 
Siegel in \cite{Siegel:Witt}; see also \cite{banagltiss} for more information on these classes
and their extension to pseudomanifolds that do not satisfy the Witt condition.
In the case of a smooth manifold, these classes are Poincar\'e dual to Hirzebruch's
cohomological L-classes of the tangent bundle.
The renormalization
\[ \mathcal{L}_* (X) = \sum_j 2^j L_{2j} (X) \in H_* (X;\mathbb{Q}) \]
yields classes which correspond to the unstable Atiyah-Singer L-classes in
the smooth case. 
We know from Moscovici-Wu \cite{MW} that $\Ch \, [D_F] $  coincides with
$\mathcal{L}_* (F)$.  Recall that 
$F$ being triangulable, the singular homology of $F$ coincides with Alexander Spanier homology
 of $F$ and this identification is used here, given that the Chern character of $[D]$ is an element in the 
 Alexander-Spanier homology. We view $ \mathcal{L}_* (F)$
as a linear form on even cohomology $H^{even}(F)$: $ u \mapsto \langle u ; \mathcal{L}_*(F) \rangle$; this is more convenient 
in order to compute Chern characters of Kasparov products, as we shall now explain. Indeed, recall  that Puschnigg proved that his bivariant Chern character transforms Kasparov product into composition; thus
we obtain 
$\bCh_g ( [[ \mathcal{E}]] \otimes [D_F]_\mathfrak{p} )= \bCh_g ( [[ \mathcal{E}]]) \circ Ch\, [D_F] $
where $\Ch\, [D_F]$ is seen as a linear form, as we have anticipated.
Therefore the composition $\bCh_g ( [[ \mathcal{E}]] \otimes [D_F] )=\bCh_g ( [[ \mathcal{E}]]) \circ Ch\, [D_F]$ is given by the map
\begin{equation}\label{generalASS}
v \mapsto \langle v \wedge \Ch_g \, \mathcal{E} ; \mathcal{L}_* (F) \rangle  \,.
\end{equation}
Taking $v=1 \in H^0(F)$ and applying Lemma \ref{lem:sign} and Proposition \ref{prop:E} we obtain that ${\rm Sign}\,( g, X)\,$  is the sum of the terms $ \langle  \Ch_g \, \mathcal{E} ; \mathcal{L}_* (F) \rangle $ for the various $F \in \mathcal{C}$. 
This establishes part 1] of the next Theorem, which, together with 
part 2], establishes Theorem \ref{thm:intro-sign} in the Introduction.

\begin{theorem} \label{thm:sign}
Let $X$ be a compact oriented Witt $G$-pseudomanifold and assume that $G=\langle g \rangle$ is topologically cyclic and compact. Assume that the inclusion 
$X^g\equiv X^G \subset X$ is $G-$equivariantly strongly normally non-singular.
We assume that the normal bundle $E_F$ to each connected component  $F$ of $X^G$ admits a $G$-invariant complex structure; equivalently, we assume that $E_F (-1)$ 
admits a $G$-invariant complex structure.
 Then:
\item 1] For every $g\in G$
and with $\theta_0=\pi$ one has 
\begin{equation}\label{pre-formula}
{\rm Sign}\,( g, X)\,=\, \sum_{F \in \mathcal{C}} < \, \prod_{j=0}^k  \frac{ Ch_g\,  \bigwedge E_c( e^{i \theta_j}) }{Ch_g\, \bigwedge^{even} E_c( e^{i \theta_j}) - Ch_g\, \bigwedge^{odd} E_c( e^{i \theta_j})} \; ; \;\mathcal{L}_* (F)\, > \,.
\end{equation}

\item 2] Assume that $E_c( e^{i \theta_j})$ is a direct sum of complex line bundles $A_1, \ldots,  A_m$.
Let $c_\ell, 1 \leq \ell \leq m$ denote the first Chern class of the line bundle $A_\ell$. 
Then one has:
\begin{equation} \label{eq:geom}
\frac{ Ch_g\,  \bigwedge E_c( e^{i \theta_j}) }{Ch_g\, \bigwedge^{even} E_c( e^{i \theta_j}) - Ch_g\, \bigwedge^{odd} E_c( e^{i \theta_j})} = \prod_{\ell=1}^m \frac{ 1 + e^{i \theta_j} e^{c_\ell}} {1 - e^{i \theta_j}e^{c_\ell}}\,.
\end{equation}
Notice that this also apply to $\theta_0=\pi$, given that
 $E (-1)$ admits a $G$-invariant complex structure.\\
Consequently, 
if we consider the cohomological characteristic classes  
$C(E_F ( e^{i \theta_j}))$ defined 
by the symmetric functions
$$ \prod_\ell \frac{ 1 + e^{i \theta_j} e^{x_\ell}} {1 - e^{i \theta_j}e^{x_\ell}},$$
then 
$${\rm Sign}\,( g, X)\,= 
\,
\sum_{F \in \mathcal{C}}  \langle  \prod_{j=0}^k   C(E_F(e^{i\theta_j})) \; ; \;\mathcal{L}_* (F)\, \rangle $$
\end{theorem}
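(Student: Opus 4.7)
The plan is to reduce Part 1 to the material already assembled in the preceding subsections and to handle Part 2 by a standard splitting-principle computation, exactly in the spirit of \cite{ASIII}.

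For Part 1, I would combine three ingredients that are by now all in place. First, Lemma \ref{lem:sign} expresses ${\rm Sign}(g,X)$ as the degree-zero part of $\Ch_g(\gamma[D_X]_\mathfrak{p}) = \sum_{F \in \mathcal{C}} \Ch_g(\gamma_F \circ \alpha_F [D_X]_\mathfrak{p})$. Second, Proposition \ref{prop:E} rewrites each summand as a Kasparov product $[[\mathcal{E}]] \otimes [D_F]_\mathfrak{p}$ in $KK^G_*(F;\mathbb{C})_\mathfrak{p}$. Third, by Puschnigg's theorem the bivariant Chern character is multiplicative with respect to Kasparov product, so $\bCh_g([[\mathcal{E}]] \otimes [D_F]_\mathfrak{p})$ is the composition of $\bCh_g([[\mathcal{E}]])$ with $\Ch[D_F]$. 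By the lemma proved just before the theorem statement, $\bCh_g([[\mathcal{E}]])$ acts on $H^{even}(F;\mathbb{C})$ as cup product with $\Ch_g \mathcal{E}$, and by Moscovici--Wu one has $\Ch[D_F] = \mathcal{L}_*(F)$, viewed as a linear functional on even-degree cohomology. Evaluating the composition on $v = 1 \in H^0(F)$, and using multiplicativity of $\Ch_g$ over the cup product decomposition $[[\mathcal{E}]] = \prod_j [[\mathcal{E}(e^{i\theta_j})]]$, yields precisely the formula displayed in Part 1.

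For Part 2, I would then carry out the following explicit $K$-theoretic calculation. Assume first that $E_c(e^{i\theta_j}) = A_1 \oplus \cdots \oplus A_m$ splits as a sum of line bundles with $c_\ell := c_1(A_\ell)$. The exterior algebra decomposes as $\bigwedge(A_1 \oplus \cdots \oplus A_m) = \bigotimes_\ell (\mathbb{C} \oplus A_\ell)$ in $K$-theory, and hence $\bigwedge^{even} - \bigwedge^{odd} = \bigotimes_\ell (\mathbb{C} \ominus A_\ell)$. Since $g$ acts on each fiber of $A_\ell$ as multiplication by $e^{i\theta_j}$, the equivariant Chern character sends $[A_\ell]$ to $e^{i\theta_j} e^{c_\ell}$, so $\Ch_g(\mathbb{C} \oplus A_\ell) = 1 + e^{i\theta_j} e^{c_\ell}$ and $\Ch_g(\mathbb{C} \ominus A_\ell) = 1 - e^{i\theta_j} e^{c_\ell}$. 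Multiplicativity of $\Ch_g$ with respect to tensor product then gives identity \eqref{eq:geom} at once. The general case, where $E_c(e^{i\theta_j})$ does not split globally, follows from the splitting principle: pulling back to the associated flag bundle realizes $E_c(e^{i\theta_j})$ as an iterated extension of line bundles; since the pullback is injective on cohomology and the resulting expression is a symmetric function in the Chern roots $x_\ell$, it descends to a universal characteristic class on $F$, which by definition is $C(E_F(e^{i\theta_j}))$. Substitution into Part 1 yields the stated closed formula.

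The main obstacle I anticipate is verifying that the bivariant Chern character $\bCh_g$ correctly intertwines the $K$-theoretic inversion of $[\bigwedge^{even} E_c] - [\bigwedge^{odd} E_c]$ in the localized module $KK^G_*(\mathbb{C},F)_\mathfrak{p}$ with the cohomological inversion of its Chern character. This requires checking that, after evaluation at $g$, the denominators are nowhere-vanishing --- equivalently, that $1 - e^{i\theta_j}$ is a unit at every point of $F$, which holds because $\theta_j \neq 0$ since $F$ is a component of $X^g$ --- and that Puschnigg's construction is compatible with localization at the prime $\mathfrak{p}$ in the sense required to write the quotient class $[[\mathcal{E}(e^{i\theta_j})]]$ and take its bivariant Chern character coherently. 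Once this formal compatibility is in place, the remaining manipulations are routine symmetric-function computations, identical in spirit to the smooth case treated by Atiyah and Singer in \cite{ASIII}.
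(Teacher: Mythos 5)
Your proposal is correct and follows essentially the same route as the paper: Part 1 is obtained exactly as you describe, by chaining Lemma \ref{lem:sign}, Proposition \ref{prop:E}, Puschnigg's compatibility of the bivariant Chern character with the Kasparov product, the preceding lemma identifying $\bCh_g([[\mathcal{E}]])$ with cup product by $\Ch_g\,\mathcal{E}$, and Moscovici--Wu's identification $\Ch[D_F]=\mathcal{L}_*(F)$, then evaluating at $v=1$. Part 2 is the same $\bigwedge(A_1\oplus\cdots\oplus A_m)=\bigotimes_\ell(\mathbb{C}\oplus A_\ell)$ computation with $\Ch_g A_\ell=e^{i\theta_j}e^{c_\ell}$; your anticipated obstacle about invertibility of the denominator is precisely what the paper addresses by observing that $1-e^{i\theta_j}\neq 0$ since $e^{i\theta_j}\neq 1$, and the passage from split bundles to the general case via symmetric functions is treated by the paper exactly as you indicate, implicitly through the splitting principle.
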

\begin{proof} Let us prove 2]. One has:
$$
\bigwedge A_\ell( e^{i \theta_j}) = \C \oplus A_\ell( e^{i \theta_j}) \, ,
$$ where $g$ acts as $Id$ on $\C$ and as $e^{i \theta_j} Id$ on $A_\ell( e^{i \theta_j})$. 
Then, one has:
\begin{equation}
\begin{aligned}
 \bigwedge E_c( e^{i \theta_j}) = \bigotimes_{\ell=1}^m \bigwedge A_\ell( e^{i \theta_j}) \\
 \bigwedge^{even} E_c( e^{i \theta_j}) - \, \bigwedge^{odd} E_c( e^{i \theta_j}) = 
     \bigotimes_{\ell=1}^l ( \C - A_\ell( e^{i \theta_j}) )\,.
\end{aligned}
\end{equation}

\noindent
Therefore, using \eqref{eq:Ch_g} one gets by an easy computation 
(see also \cite[Page 11]{G-Signature}):
$$ Ch_g\, \bigwedge A_\ell( e^{i \theta_j}) = 1 + e^{i \theta_j}e^{c_\ell} \,,\, \; Ch_g\, ( \C - A_k( e^{i \theta_j}) )\, =\,1- e^{i \theta_j}e^{c_\ell}\,.
$$ Part 2] follows then immediately. 
 
\end{proof}
\begin{remark}
 Notice that the right hand side 
of \eqref{eq:geom} is well defined as a differential form because the constant term of $1 - e^{i \theta_j}e^{c_j}$ is not zero given that 
$e^{i \theta_j} \not=1$. 
\end{remark}

\begin{remark}\label{remark:general}
We have given a formula for ${\rm Sign}(g,X)$ assuming that $G=\langle g \rangle$ is topologically cyclic and compact. Following a remark by Atiyah and Segal, 
\cite[page 539, line -9]{ASII}, 
we explain why this provides a formula for general compact Lie group actions. Let $G$ be such a general group and let $g\in G$. 
We want to give a formula for ${\rm Sign}(g,X)$, that is ${\rm ind}_G (D^{{\rm sign},+})(g)$. 
Recall that we have a restriction homomorphism $\rho: K^G_*(X) \rightarrow K^H_*(X)$, with $H=\langle g \rangle$.
The signature operator $D^{{\rm sign}}$ associated to a $G$-invariant wedge metric defines classes $[D_X^G]\in K^G_*(X)$ and 
$[D_X^H]\in K^H_*(X)$ and we have, by definition, $\rho( [D_X^G])=[D_X^H]$. Let $\pi:X\to \{pt\}$ the map to a point
and let $\pi^G_*$ and $\pi^H_*$ the homomorphisms induced in equivariant K-homology.
We have, by functoriality, $\pi^H_*\circ \rho=\pi^G_*$
Then
$${\rm ind}_H (D^{{\rm sign},+}) (g)= \pi^H_* [D_X^H](g)= \pi^H_*\circ \rho( [D_X^G])(g)= \pi^G_* [D_X^G](g)=
{\rm ind}_G (D^{{\rm sign},+})(g)$$
and since the first term  is computed by Theorem \ref{thm:sign}, we are done.
\end{remark}
\begin{remark}
Notice that our arguments give in fact a formula for $\bCh_g ( [[ \mathcal{E}]] \otimes [D_F]_\mathfrak{p} )$; see \eqref{generalASS}.
\end{remark}

\newcommand{\etalchar}[1]{$^{#1}$}

\end{document}